\title{Quotients and invariants of $\mathcal{AS}$-sets equipped with a finite group action}
\author{Fabien Priziac}
\date{}
\newtheorem{de}{Definition}[section]
\newtheorem{theo}[de]{Theorem}
\newtheorem{prop}[de]{Proposition}
\newtheorem{cor}[de]{Corollary}
\newtheorem{lem}[de]{Lemma}
\newcommand{\Hom}{\text{Hom}}
\theoremstyle{remark}
\newtheorem{rem}[de]{Remark}
\newtheorem{ex}[de]{Example}
\begin{document}

\maketitle

\begin{abstract}   
Using the geometric quotient of a real algebraic set by the action of a finite group $G$, we construct invariants of $G$-$\mathcal{AS}$-sets with respect to equivariant homeomorphisms with $\mathcal{AS}$-graph, including additive invariants with values in $\mathbb{Z}$.
\end{abstract}

\footnote{Keywords : real algebraic sets, $\mathcal{AS}$-sets, group action, real geometric quotient, equivariant (co)homology, weight filtration, additive invariants, equivariant virtual Betti numbers, equivariant virtual Poincar\'e series.
\\
{\it 2010 Mathematics Subject Classification :} 14P05, 14P10, 14P20, 14L24, 57S17.}

\tableofcontents

\section{Introduction}

In \cite{MCP-VB}, C. McCrory and A. Parusi\'nski showed the existence (and the uniqueness) of an application $\beta$ which associates to any affine algebraic variety a polynomial in $\mathbb{Z}[u]$, and which is additive, invariant with respect to biregular isomorphisms and coincides with the classical Poincar\'e polynomial with coefficients in $\mathbb{Z}_2 := \mathbb{Z}/2\mathbb{Z}$ on compact nonsingular varieties : they called $\beta$ the virtual Poincar\'e polynomial. The coefficients of $\beta$ are called the virtual Betti numbers. In \cite{GF-MI}, G. Fichou extended $\beta$ to the wider and more flexible class of $\mathcal{AS}$-sets (see below definition \ref{defas}) and proved that the virtual Poincar\'e polynomial is an invariant with respect to Nash isomorphisms (i.e. semialgebraic and analytic isomorphisms). Finally, in \cite{MCP}, C. McCrory and A. Parusi\'nski associated to each $\mathcal{AS}$-set $X$ a filtered complex $\mathcal{N} C_*(X)$, called the Nash constructible filtration, which induces a spectral sequence from which one can recover the virtual Betti numbers. Since the Nash-constructible filtration is invariant under homeomorphisms with $\mathcal{AS}$-graph, so is the virtual Poincar\'e polynomial (it is actually invariant under bijections with $\mathcal{AS}$-graph).

In this paper, we consider $\mathcal{AS}$-sets equipped with a biregular action of a finite group $G$ (defined on the projective Zariski closure). We associate to each such $G$-$\mathcal{AS}$-set $X$ a filtered complex $\mathcal{N} C_*(X ; G)$, called the equivariant Nash constructible filtration (definition \ref{defequivnashcontr}). By construction, the equivariant Nash constructible filtration is invariant with respect to equivariant homeomorphisms with $\mathcal{AS}$-graph. As in the non-equivariant case, we can extract, from the induced spectral sequence, additive invariants $\beta_q(\, \cdot \, ; G)$, $q \in \mathbb{N}$, with coefficients in $\mathbb{Z}$ (theorem \ref{equivvbn}) : we call them the equivariant virtual Betti numbers (they are different from the equivariant virtual Betti numbers of \cite{GF}). They are invariant with respect to equivariant homeomorphisms with $\mathcal{AS}$-graph (because so is $\mathcal{N} C_*(\, \cdot \, ; G)$) and coincide with the dimensions of equivariant homology groups $H^G_*(X)$ if $X$ is compact and nonsingular. The equivariant homology $H^G_*(X)$ is the equivariant singular homology of $X$ with coefficients in $\mathbb{Z}_2$, which can be computed as the singular homology of the (topological) quotient of $X \times E_G$ by $G$, with $E_G$ being any contractible topological space equipped with a free action of $G$. The infinite real Stiefel manifold $V_{|G|}(\mathbb{R}^{\infty})$ is such a space.

The construction of the equivariant Nash contructible filtration deeply involves the properties of the geometric quotient of a real algebraic set by a finite group action. Indeed, if $X$ is a $G$-$\mathcal{AS}$-set, $\mathcal{N} C_*(X ; G)$ is an (inductive) limit of the Nash constructible filtrations of quotients $X \times V^k/G$, $k \in \mathbb{N}$. For $k \in \mathbb{N}$, $V^k$ is the real Stiefel manifold $V_{|G|}(\mathbb{R}^k)$ (it is a compact nonsingular real algebraic set) equipped with a free action of $G$. For the equivariant Nash constructible filtration to be well-defined, the quotients $X \times V^k/G$ have to be given a compatible $\mathcal{AS}$-structure. Actually, each $X \times V^k$ is an example of free $G$-$\mathcal{AS}$-set (definition \ref{deffreegasset}) and the quotient of a free $G$-$\mathcal{AS}$-set by $G$ can be given a well-defined $\mathcal{AS}$-structure (corollary \ref{quotientas}). 

For compact $G$-$\mathcal{AS}$-sets, the equivariant Nash constructible filtration induces a filtration on $H^G_*$ that we call the equivariant weight filtration (definition \ref{defhomequivweightfil}) in analogy with the non-equivariant case (\cite{MCP}). It is different from the equivariant weight filtration of \cite{Pri-EWF} (which was based on a different equivariant homology). 

Constructing the equivariant virtual Betti numbers, we have in mind their future use in the classification of real analytic germs. Denote by $\beta( \, \cdot \, ; G)$ the generating function of the equivariant virtual Betti numbers : we call it the equivariant virtual Poincar\'e series. Even if it is different from the equivariant virtual Poincar\'e series of \cite{GF}, it shares with it several properties including one (proposition \ref{propintmotequiv}) that should allow to define an invariant, in terms of (motivic) zeta functions, for an equivariant arc-analytic (see \cite{JB-AAE}) or equivariant blow-Nash equivalence (see \cite{GF-ZF}) of equivariant Nash germs, in a way similar to \cite{Pri-EBN} and \cite{Pri-ESNG}. An advantage of our equivariant virtual Poincar\'e series is that it is an invariant with respect to equivariant homeomorphism with $\mathcal{AS}$-graph, which we do not know for the equivariant virtual Poincar\'e series of \cite{GF}. On the other hand, G. Fichou's equivariant virtual Poincar\'e series encodes the dimension, which is not the case for ours : the two equivariant virtual Poincar\'e series have to be thought as complementary.
\\

The structure of the paper is as follows.

In section \ref{secquotalg}, we review the definition and properties of the geometric quotient of a real algebraic set by a finite group action. We focus on the properties that we need in the rest of the paper, such as functoriality or regularity, making precise some proofs.

In section \ref{sectionquotientAS}, we give a precise well-defined (up to Nash isomorphism) and functorial $\mathcal{AS}$-structure on the quotient of a free $G$-$\mathcal{AS}$-set, that is a $G$-$\mathcal{AS}$-set such that the action of $G$ on its compact arc-symmetric closure is free. This uses the key proposition \ref{quotientarcsym}.

In section \ref{secequivhomcohom}, we give the definition and the basic properties of the equivariant homology and cohomology with respect to which are constructed the equivariant $\mathcal{AS}$ invariants of section~\ref{sectequivnashconsfil}. The first paragraph is dedicated to the equivariant singular homology and cohomology, defined using the Borel construction. We show in particular that they can be computed using the Stiefel manifolds and the real geometric quotient (proposition \ref{equivsinghomindlim1} and corollary \ref{equivsinghomindlim2}). In the second part, we define the equivariant homology (and cohomology) with closed supports, as the homology of the inductive limit of the semialgebraic chain complexes with closed supports (\cite{MCP} Appendix) of the quotients $X \times V^k/G$, $k \in \mathbb{N}$. The two equivariant (co)homologies, singular and with closed supports, coincide on compact $G$-$\mathcal{AS}$-sets (lemma \ref{coincideequivhomsingclosed}).

In section \ref{sectequivnashconsfil}, we construct the equivariant Nash constructible filtration : if $X$ is $G$-$\mathcal{AS}$-set, it is the inductive limit of the Nash constructible filtration of the $\mathcal{AS}$ quotients $X \times V^k/G$. We then prove three essential properties of the equivariant Nash constructible filtration : it is functorial with respect to equivariant proper continuous maps with $\mathcal{AS}$ graph (theorem \ref{functequivnashfil}), it is additive with respect to equivariant closed inclusions in terms of a short exact sequence (theorem \ref{addequivnashfil}) and the lines of the induced spectral sequence are bounded (theorem \ref{equivspecseqbound}). We then define the equivariant virtual Betti numbers (theorem \ref{equivvbn}). At the end of this section, we also define the cohomological counterpart of the equivariant Nash constructible filtration (definition \ref{defdualequivnashfil}) and give its properties.

The final section of this paper is dedicated to some further properties of the equivariant virtual Poincar\'e series, including the statement that should be useful for the classification of real analytic germs (proposition \ref{propintmotequiv}). The other proposition \ref{equivvpoinctrivfree} gives the behaviour of the equivariant virtual Poincar\'e series on free $G$-$\mathcal{AS}$-sets and on $\mathcal{AS}$-sets equipped with a trivial action of $G$.

Throughout this paper, $G$ will always denote a finite group and $\mathbb{Z}_2 := \mathbb{Z}/2 \mathbb{Z}$ will denote the two-elements field. 
\\

{\bf Acknowledgements.} The author wishes to thank J.-B. Campesato and G. Fichou for useful discussions and comments. 

\section{Quotient of a real algebraic set by a finite group action} \label{secquotalg}

\subsection{Construction of the geometric quotient of a real algebraic set by a polynomial finite group action}

Let $G$ be a finite group of order $N \in \mathbb{N} \setminus \{0\}$.

In this section, we will consider a real algebraic set $X \subset \mathbb{R}^d$ on which $G$ acts via polynomial maps $\alpha_g : X \rightarrow X$, $g \in G$. We are going to recall the construction of the semialgebraic geometric quotient of $X$ by $G$ (see also for instance \cite{Pro} or \cite{Ozan}). 
\\

Denote by $\mathcal{P}(X) := \mathbb{R}[x_1, \ldots, x_d]/I(X)$ the $\mathbb{R}$-algebra of polynomial functions on $X$. The action of $G$ on $X$ induces an action of $G$ on $\mathcal{P}(X)$, defined by 
$$g \cdot f := f \circ \alpha_g^{-1}$$
if $f \in \mathcal{P}(X)$ and $g \in G$. Since $\mathcal{P}(X)$ is a finitely generated $\mathbb{R}$-algebra and $G$ is finite, the subalgebra $\mathcal{P}(X)^G$ of invariant polynomial functions on $X$ is a finitely generated $\mathbb{R}$-algebra as well (see for instance \cite{Sha} Algebraic Appendix, section 4) : there exist invariant polynomial functions $p_1, \ldots, p_m$ on $X$ which generate $\mathcal{P}(X)^G$ as an $\mathbb{R}$-algebra.

Now, consider the complexification $X_{\mathbb{C}} \subset \mathbb{C}^d$ of $X$ : it is, by definition, the Zariski closure of $X$ considered as a subset of $\mathbb{C}^d$ (see \cite{AK} II.2). Notice that the coordinate ring $\mathbb{C}[X_{\mathbb{C}}]$ of $X_{\mathbb{C}}$ is the tensor product $\mathcal{P}(X) \otimes_{\mathbb{R}} \mathbb{C}$. We extend linearly the action of $G$ on $\mathcal{P}(X)$ into an action on $\mathbb{C}[X_{\mathbb{C}}]$, which corresponds (via the contrafunctorial equivalence between the category of complex algebraic sets and the category of reduced finitely generated $\mathbb{C}$-algebras, given by the Nullstellensatz) to an action of $G$ on $X_{\mathbb{C}}$ (the ``complexified'' action being given by the same polynomials with real coefficients as for the action of $G$ on $X$).   

Since the action of $G$ on $\mathbb{C}[X_{\mathbb{C}}] = \mathcal{P}(X) \otimes_{\mathbb{R}} \mathbb{C}$ is induced by linear extension, we have $\mathbb{C}[X_{\mathbb{C}}]^G = \mathcal{P}(X)^G \otimes_{\mathbb{R}} \mathbb{C}$ and the $\mathbb{C}$-algebra $\mathbb{C}[X_{\mathbb{C}}]^G$ is then generated (as a $\mathbb{C}$-algebra) by the invariant real polynomial functions $p_1, \ldots, p_m$ (considered as functions on $X_{\mathbb{C}}$).

Therefore, the reduced finitely generated $\mathbb{C}$-algebra $\mathbb{C}[X_{\mathbb{C}}]^G$ corresponds to the complex algebraic subset $Z := V\left(\left\{P \in \mathbb{C}[z_1, \ldots, z_m]~|~P(p_1, \ldots, p_m) = 0\right\}\right)$ of $\mathbb{C}^m$ and the inclusion $\mathbb{C}[X_{\mathbb{C}}]^G \subset \mathbb{C}[X_{\mathbb{C}}]$ corresponds to the polynomial map  

$$\pi : \begin{array}{ccc} 
		X_{\mathbb{C}} & \rightarrow & Z \\
		x & \mapsto & (p_1(x), \ldots, p_m(x))
		\end{array}.$$
The morphism $\pi$ is a finite map (\cite{Sha} I.5.3 Example 1), hence surjective (\cite{Sha} I.5.3 Theorem 4). 

Finally, we shall consider the image $Y := \pi(X)$ of $X$ by $\pi$. Since $\pi$ is given by real polynomials, $Y$ is a semialgebraic subset of $\mathbb{R}^m$ (\cite{BCR} Chap. 2). Notice also that, if we denote by $W$ the Zariski closure of $Y$ in $\mathbb{R}^m$, then $Z = W_{\mathbb{C}}$ (see \cite{Ozan} Lemma 1.3).  

\begin{de}
We call $\pi : X \rightarrow Y$ the geometric quotient of $X$ by $G$. By abuse of terminology, we will also call $Y$ the geometric quotient of $X$ by $G$.
\end{de}

\begin{rem} \label{indepgen} \begin{itemize}
	\item A (direct) correspondence between the real algebraic set $W$ and the real finitely generated $\mathbb{R}$-algebra $\mathcal{P}(X)^G$ is given by the real Nullstellensatz (\cite{BCR} Theorem 4.1.4).
	\item Different sets of generators for $\mathcal{P}(X)^G$ provides isomorphic geometric quotients, via (real) polynomial mappings (consider the complexified algebra $\mathbb{C}[Z] = \mathcal{P}(X)^G \otimes_{\mathbb{R}} \mathbb{C}$ : two different sets of real invariant generators provide isomorphic algebraic sets via real polynomial mappings).
	\item If $x_1, x_2 \in X$ then $\pi(x_1) = \pi(x_2)$ if and only if there exists $g \in G$ such that $x_2 = \alpha_g(x_1)$ (see for instance \cite{Sha} Chapter 1, section 2.3, Example 11).
	\end{itemize}
\end{rem}

\begin{ex} \label{firstexquotient} \begin{enumerate}
	\item Consider the action of $G := \mathbb{Z}/2\mathbb{Z}$ on $X := \mathbb{R}^2$ given by the involution $\sigma : (x_1,x_2) \mapsto (-x_1,x_2)$. Then $\mathcal{P}(X) = \mathbb{R}[X_1,X_2]$, $\mathcal{P}(X)^G = \mathbb{R} \langle X_1^2, X_2 \rangle$ and $\pi : (x_1, x_2) \mapsto (x_1^2, x_2)$, so that the geometric quotient of $X$ by $G$ is the half-plane $\left\{(y_1, y_2) \in \mathbb{R}^2~|~y_1 \geq 0\right\}$.
	\item Now, consider the action of $G$ on $X$ given by $\sigma : (x_1,x_2) \mapsto (-x_1,-x_2)$. We have $\mathcal{P}(X)^G = \mathbb{R} \langle X_1^2, X_2^2, X_1 X_2 \rangle$ and $\pi : (x_1, x_2) \mapsto (x_1^2, x_2^2, x_1 x_2)$, so that the geometric quotient of $X$ by $G$ is the half elliptic cone $\left\{(y_1, y_2, z) \in \mathbb{R}^3~|~z^2 = y_1 y_2,~y_1 \geq 0,~ y_2 \geq 0\right\}$.
\end{enumerate}
\end{ex}

\subsection{Basic properties of the construction of the real geometric quotient}

We give some basic properties of the previous construction. First, it is functorial with respect to polynomial maps :

\begin{lem} \label{indepemb} Let $X$ and $X'$ be real algebraic sets on which $G$ acts via polynomial maps, and let $\pi : X \rightarrow Y$ and $\pi' : X' \rightarrow Y'$ be the respective geometric quotients of $X$ and $X'$ by $G$. If $\psi : X \rightarrow X'$ is an equivariant polynomial map, there exists a unique polynomial map $\rho : Y \rightarrow Y'$ such that the following diagram 
$$\begin{array}{ccc} 
X & \stackrel{\psi}{\longrightarrow} & X'\\
 ~ \downarrow {\scriptstyle \pi} & & ~ \downarrow {\scriptstyle \pi'}\\
Y & \stackrel{\rho}{\longrightarrow} & Y'
\end{array}$$  
commutes.
\end{lem}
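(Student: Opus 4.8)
The plan is to pass to the level of coordinate rings, where the universal property of invariants does the work, and then translate back via the Nullstellensatz-type correspondences set up above. First I would reduce to the complexified picture: let $\pi : X_{\mathbb{C}} \to Z$ and $\pi' : X'_{\mathbb{C}} \to Z'$ be the morphisms of complex algebraic sets constructed before the definition, so that $\pi$ corresponds to the inclusion $\mathbb{C}[X_{\mathbb{C}}]^G \subset \mathbb{C}[X_{\mathbb{C}}]$ and similarly for $\pi'$. An equivariant polynomial map $\psi : X \to X'$ induces, by complexification, an equivariant morphism $\psi_{\mathbb{C}} : X_{\mathbb{C}} \to X'_{\mathbb{C}}$, hence a $G$-equivariant $\mathbb{C}$-algebra homomorphism $\psi_{\mathbb{C}}^* : \mathbb{C}[X'_{\mathbb{C}}] \to \mathbb{C}[X_{\mathbb{C}}]$. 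Because $\psi_{\mathbb{C}}^*$ is $G$-equivariant, it sends invariants to invariants, so it restricts to a $\mathbb{C}$-algebra homomorphism $\mathbb{C}[X'_{\mathbb{C}}]^G \to \mathbb{C}[X_{\mathbb{C}}]^G$. Dualizing this restriction through the contrafunctorial equivalence gives a morphism of complex algebraic sets $\rho_{\mathbb{C}} : Z \to Z'$, and the compatibility of the restriction with the two inclusions $\mathbb{C}[X_{\mathbb{C}}]^G \subset \mathbb{C}[X_{\mathbb{C}}]$, $\mathbb{C}[X'_{\mathbb{C}}]^G \subset \mathbb{C}[X'_{\mathbb{C}}]$ translates exactly into the commutativity $\rho_{\mathbb{C}} \circ \pi = \pi' \circ \psi_{\mathbb{C}}$. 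Uniqueness of $\rho_{\mathbb{C}}$ follows from surjectivity of $\pi$ (noted in the excerpt as a consequence of $\pi$ being finite).

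Next I would check that $\rho_{\mathbb{C}}$ is defined over $\mathbb{R}$, i.e. given by real polynomials. This is because everything in sight is: the generators $p_1,\dots,p_m$ of $\mathcal{P}(X)^G$ and $p'_1,\dots,p'_{m'}$ of $\mathcal{P}(X')^G$ are real, $\psi$ is a real polynomial map, and each $p'_j \circ \psi$ is an invariant real polynomial function on $X$, hence a real polynomial expression $Q_j(p_1,\dots,p_m)$ in the generators; setting $\rho := (Q_1,\dots,Q_{m'})$ gives a real polynomial map $\mathbb{R}^m \to \mathbb{R}^{m'}$ whose complexification is $\rho_{\mathbb{C}}$. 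Then restrict to the real/semialgebraic quotients: by construction $Y = \pi(X) \subset \mathbb{R}^m$ and $Y' = \pi'(X') \subset \mathbb{R}^{m'}$, and for $x \in X$ one has $\rho(\pi(x)) = (Q_1(p_1(x),\dots),\dots) = (p'_1(\psi(x)),\dots,p'_{m'}(\psi(x))) = \pi'(\psi(x)) \in Y'$, so $\rho$ maps $Y$ into $Y'$ and the square commutes at the real points. Uniqueness of $\rho : Y \to Y'$ again follows from the surjectivity of $\pi : X \to Y$ (by definition $Y = \pi(X)$), since two polynomial maps on $\mathbb{R}^m$ agreeing on the Zariski-dense subset... — more simply, they agree on all of $Y$ because every point of $Y$ is a $\pi$-image.

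The main obstacle, and the place where I would be most careful, is the interface between the complex-algebraic functoriality and the real/semialgebraic statement. The lemma asks for a polynomial map $\rho : Y \to Y'$ of semialgebraic sets, not merely a morphism $W \to W'$ of the real Zariski closures; one must verify that the real polynomial map constructed from the generators actually carries the semialgebraic image $Y = \pi(X)$ into $Y' = \pi'(X')$, which is exactly the pointwise computation $\rho \circ \pi = \pi' \circ \psi$ above — it works precisely because $p'_j \circ \psi$ lies in $\mathcal{P}(X)^G$ and therefore is a \emph{polynomial} in the chosen invariant generators $p_1,\dots,p_m$. A secondary subtlety is well-definedness independence from the choice of generators, but this is already handled by the second bullet of Remark \ref{indepgen}, so one may simply fix generators throughout. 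I expect the write-up to be short: set up $\psi_{\mathbb{C}}^*$, restrict to invariants, produce $\rho$ explicitly via $p'_j \circ \psi = Q_j(p_1,\dots,p_m)$, and conclude commutativity and uniqueness from $\pi$ being onto.
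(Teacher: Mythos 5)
Your proposal is correct and follows essentially the same route as the paper's proof: complexify $\psi$, restrict $\psi_{\mathbb{C}}^*$ to the invariant algebras, express each $q_j \circ \psi \in \mathcal{P}(X)^G$ as a real polynomial $Q_j$ in the chosen generators $p_1,\dots,p_m$, define $\rho = (Q_1,\dots,Q_{m'})$, and verify $\rho \circ \pi = \pi' \circ \psi$ pointwise so that $\rho(Y) \subset Y'$. Your explicit remark that uniqueness of $\rho : Y \to Y'$ is immediate from $Y = \pi(X)$ is a small welcome addition that the paper leaves implicit.
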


\begin{proof}
The polynomial map $\psi : X \rightarrow X'$ induces, by complexification, a polynomial map $\psi_{\mathbb{C}} : X_{\mathbb{C}} \rightarrow X'_{\mathbb{C}}$ (given by the same polynomials with real coefficients), which is also equivariant (with respect to the complexified actions of $G$) thanks to the functoriality of the complexification process.

This morphism corresponds to a morphism of $\mathbb{C}$-algebras $\psi_{\mathbb{C}}^* : \mathbb{C}[X'_{\mathbb{C}}] \rightarrow \mathbb{C}[X_{\mathbb{C}}]$. We then consider the restriction $\mathbb{C}[X'_{\mathbb{C}}]^G \rightarrow \mathbb{C}[X_{\mathbb{C}}]^G$ of this last morphism ($\psi_{\mathbb{C}}$ is equivariant), which corresponds to a polynomial map $\rho : Z \rightarrow Z'$, (where $Z$ and $Z'$ are the respective geometric quotient of $X_{\mathbb{C}}$ and $X'_{\mathbb{C}}$), given by real polynomials.

Precisely, we can describe the polynomial map $\rho$ in the following way. Suppose that $(\mathcal{P}(X))^G$ is generated by the real polynomial functions $p_1, \ldots, p_m$ and that $(\mathcal{P}(X'))^G$ is generated by $q_1, \ldots, q_{m'}$. For each $j \in \{1, \ldots, m'\}$, we have $\psi_{\mathbb{C}}^*(q_j) = q_j \circ \psi_{\mathbb{C}} = q_j \circ \psi \in (\mathcal{P}(X))^G$ ($\psi$ is equivariant), so that $\psi^*_{\mathbb{C}}(q_j)$ is a real polynomial $Q_j$ in the real polynomial functions $p_1, \ldots, p_m$. If $z \in Z$, we then have
\begin{equation} \label{quotientofamap}
\rho(z) = (Q_1(z), \ldots, Q_{m'})
\end{equation}

Using this description of the map $\rho$, we can check that the diagram 
$$\begin{array}{ccc} 
X_{\mathbb{C}} & \stackrel{\psi_{\mathbb{C}}}{\longrightarrow} & X'_{\mathbb{C}}\\
 ~ \downarrow {\scriptstyle \pi} & & ~ \downarrow {\scriptstyle \pi'}\\
Z & \stackrel{\rho}{\longrightarrow} & Z'
\end{array}$$  
commutes. In particular, $\rho(Y) = \rho(\pi(X)) = \pi'(\psi(X)) \subset Y'$ and the diagram 
$$\begin{array}{ccc} 
X & \stackrel{\psi}{\longrightarrow} & X'\\
 ~ \downarrow {\scriptstyle \pi} & & ~ \downarrow {\scriptstyle \pi'}\\
Y & \stackrel{\rho}{\longrightarrow} & Y'
\end{array}$$  
commutes as well (notice that we also have a restriction $\rho : W \rightarrow W'$ if $W$ and $W'$ denote the respective (real) Zariski closures of $Y$ and $Y'$). 

Furthermore, we can also check functoriality by using the description (\ref{quotientofamap}) of the map $\rho$.
\end{proof}

\begin{rem} In particular, if $X$ is embedded in some other $\mathbb{R}^D$ via an equivariant polynomial embedding, the semialgebraic geometric quotient of $X$ and the semialgebraic geometric quotient of its embedding are isomorphic via polynomial maps.
\end{rem}

If we have an equivariant inclusion of real algebraic sets $X \subset X'$, the geometric quotient of $X$ can be naturally embedded in the geometric quotient of $X'$ :

\begin{lem} \label{quotientinabigger} Keep the notation of previous lemma \ref{indepemb} and suppose that we have an equivariant inclusion $X \subset X'$. It induces an equivariant inclusion $Y \subset Y'$ of the corresponding geometric quotients.
\end{lem}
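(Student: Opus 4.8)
The plan is to reduce everything to a judicious choice of invariant generators. Write $\iota : X \hookrightarrow X'$ for the inclusion; it is an equivariant polynomial map, so by Lemma \ref{indepemb} there is a unique polynomial map $\rho : Y \rightarrow Y'$ with $\rho \circ \pi = \pi' \circ \iota$, and since $G$ acts trivially on both quotients it suffices to check that $\rho$ realizes $Y$ as a subset of $Y'$.

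First I would observe that the restriction homomorphism $\mathcal{P}(X') \rightarrow \mathcal{P}(X)$ is surjective and $G$-equivariant: indeed $\mathcal{P}(X) = \mathbb{R}[x_1, \ldots, x_d]/I(X)$, $\mathcal{P}(X') = \mathbb{R}[x_1, \ldots, x_d]/I(X')$ with $I(X') \subseteq I(X)$, and the action is by precomposition with the (restricted) polynomial automorphisms. Then, using that $G$ is finite and $\mathbb{R}$ has characteristic zero, the Reynolds operator $f \mapsto \frac{1}{|G|}\sum_{g \in G} g \cdot f$ shows that the induced map $\mathcal{P}(X')^G \rightarrow \mathcal{P}(X)^G$ is still surjective: any invariant $f$ on $X$ lifts to some $\tilde f \in \mathcal{P}(X')$, and the $G$-average of $\tilde f$ is an invariant polynomial function on $X'$ restricting to $f$. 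Consequently, if $q_1, \ldots, q_{m'}$ generate $\mathcal{P}(X')^G$ as an $\mathbb{R}$-algebra, their restrictions $p_j := q_j|_X \in \mathcal{P}(X)^G$ generate $\mathcal{P}(X)^G$.

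With this choice of generators the quotient maps read $\pi'(x') = (q_1(x'), \ldots, q_{m'}(x'))$ for $x' \in X'$ and $\pi(x) = (p_1(x), \ldots, p_{m'}(x)) = \pi'(x)$ for $x \in X$; that is, $\pi = \pi' \circ \iota$, and in the description of Lemma \ref{indepemb} (formula (\ref{quotientofamap})) each $Q_j = \psi_{\mathbb{C}}^*(q_j) = q_j|_X = p_j$ is the $j$-th coordinate, so $\rho$ is literally the inclusion of $Y = \pi(X) = \pi'(X)$ into $Y' = \pi'(X')$ inside $\mathbb{R}^{m'}$. By Remark \ref{indepgen}, changing the generating set replaces this inclusion by its composition with isomorphisms of the quotients given by polynomial maps, so the inclusion $Y \subseteq Y'$ is well-defined up to such isomorphisms; as $G$ acts trivially on $Y$ and $Y'$, it is equivariant.

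The only step requiring genuine care is the second one, the exactness of the $G$-invariants functor under restriction, which is precisely where the hypotheses that $G$ is finite and that we work over $\mathbb{R}$ (characteristic zero) are used; the rest is bookkeeping. As an alternative to the generator argument one can instead verify directly that the map $\rho$ of Lemma \ref{indepemb} is injective: if $\rho(\pi(x_1)) = \rho(\pi(x_2))$ then $\pi'(\iota(x_1)) = \pi'(\iota(x_2))$, so by Remark \ref{indepgen} (applied to $X'$) there is $g \in G$ with $\iota(x_2) = \alpha'_g(\iota(x_1)) = \iota(\alpha_g(x_1))$, whence $x_2 = \alpha_g(x_1)$ and $\pi(x_2) = \pi(x_1)$; this gives injectivity but not immediately that $Y$ inherits its structure from $Y'$, which is why I favour the generator-restriction approach.
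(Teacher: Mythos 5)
Your proposal is correct and follows essentially the same route as the paper: surjectivity of the restriction map $\mathcal{P}(X')^G \rightarrow \mathcal{P}(X)^G$ via the averaging (Reynolds) operator, then choosing the generators of $\mathcal{P}(X)^G$ to be the restrictions of generators of $\mathcal{P}(X')^G$, so that $\pi = \pi'|_X$ and $Y = \pi'(X) \subset Y'$. The paper merely phrases the final identification through the surjection of complexified invariant algebras $\mathbb{C}[Z'] \rightarrow \mathbb{C}[Z]$ and the corresponding closed embedding $Z \subset Z'$, which is the same bookkeeping you do directly on the real quotient maps.
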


\begin{proof} The inclusion $i : X \hookrightarrow X'$ induces a surjective morphism of $\mathbb{R}$-algebras $i^* : \mathcal{P}(X') \rightarrow \mathcal{P}(X)$ given by the restriction. Since $i$ is equivariant, we can consider the restriction morphism $i^* : \mathcal{P}(X')^G \rightarrow \mathcal{P}(X)^G$, which is surjective as well since $G$ is finite (if $f = i^*(h) \in \mathcal{P}(X)^G$ with $h \in \mathcal{P}(X')$, write $f = i^* \left(\frac{1}{|G|} \sum_{g \in G} g \cdot h\right)$).

Consequently, if the $\mathbb{R}$-algebra $\mathcal{P}(X')^G$ is generated by invariant polynomial functons $p_1, \ldots, p_m$, the invariant polynomial functions $i^*(p_1), \ldots, i^*(p_m)$ generate the $\mathbb{R}$-algebra $\mathcal{P}(X)^G$.

It follows that the morphism $i_{\mathbb{C}}^* : \mathbb{C}[Z'] = \mathbb{C}[X'_{\mathbb{C}}]^G \rightarrow \mathbb{C}[Z] = \mathbb{C}[X_{\mathbb{C}}]^G$ (which is also surjective) corresponds to the closed embedding $Z \subset \mathbb{C}^m \rightarrow Z' \subset \mathbb{C}^m$ given by $(z_1, \ldots, z_m) \mapsto (z_1, \ldots, z_m)$, so we can consider it as an inclusion $Z \subset Z' \subset \mathbb{C}^m$. Hence the inclusion $Y \subset Y' \subset \mathbb{R}^m$.

\end{proof}

\begin{rem} In particular, if the action of $G$ on $X$ comes from the action of $G$ on an algebraic set $X' \supset X$, its geometric quotient is (up to polynomial isomorphim) the image of $X$ under the quotient map of $X'$.
\end{rem}

\begin{ex} \label{secexquotient} \begin{enumerate}
	\item Consider the action of $G := \mathbb{Z}/2\mathbb{Z}$ on the unit circle $\mathbb{S}^1$ given by the involution $\sigma : (x_1,x_2) \mapsto (-x_1,x_2)$. We then apply the quotient map $\pi$ of example \ref{firstexquotient} (i) to obtain that the geometric quotient of $\mathbb{S}^1$ by $G$ is the semialgebraic subset $\left\{(y_1, y_2) \in \mathbb{R}^2~|~y_1 + y^2 = 1,~y_1 \geq 0\right\}$. 
	\item Now, consider the free action of $G$ on $\mathbb{S}^1$ given by the involution $\sigma : (x_1,x_2) \mapsto (-x_1,-x_2)$.  We apply the quotient map $\pi$ of example \ref{firstexquotient} (ii) and the geometric quotient of $\mathbb{S}^1$ by $G$ is the section of the half elliptic cone by the hyperplane of equation $y_1 + y_2 = 1$, hence an ellipse (it is then polynomially isomorphic to the unit circle).
	\item Consider the free action of $G$ on the hyperbola $X := \{x_1 x_2 = 1\}$ of $\mathbb{R}^2$ given by the same involution $\sigma : (x_1,x_2) \mapsto (-x_1,-x_2)$. The geometric quotient of $X$ by $G$ is the section of the half elliptic cone by the hyperplane $y_3 = 1$, hence the ``half-hyperbola'' $\{y_1 y_2 = 1, y_1, y_2 \geq 0, y_3 = 1\}$ (polynomially isomorphic to the half-hyperbola $\{x_1 x_2 = 1, x_1, x_2 \geq 0\}$ of $\mathbb{R}^2$).
\end{enumerate}
\end{ex}

\subsection{Geometric properties of the real geometric quotient}

Let $X$ be a real algebraic set acted by $G$ via polynomial maps $\alpha_g : X \rightarrow X$, $g \in G$, and $\pi : X \rightarrow Y \subset W$ be the associated quotient map, where $W$ is the Zariski closure of $Y$.

We will give some geometric properties of the quotient : we begin by checking that the geometric quotient preserves the dimension. 

\begin{lem} \label{quotientdimension} We have
$$\dim Y = \dim W = \dim X.$$
\end{lem}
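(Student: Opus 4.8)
The plan is to prove the two equalities $\dim Y = \dim W$ and $\dim W = \dim X$ separately, the first being essentially formal and the second requiring the finiteness of $\pi$. First I would note that $Y$ is a semialgebraic set whose Zariski closure is $W$ by definition, so $\dim Y = \dim W$ is immediate from the general fact that a semialgebraic set and its Zariski closure have the same dimension (see \cite{BCR} Proposition 2.8.2). Thus the content of the lemma reduces to showing $\dim W = \dim X$, or equivalently, after complexifying, that $\dim_{\mathbb{C}} Z = \dim_{\mathbb{C}} X_{\mathbb{C}}$, since complexification preserves dimension for real algebraic sets and $Z = W_{\mathbb{C}}$ was already recorded in the construction.

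For the equality $\dim Z = \dim X_{\mathbb{C}}$, I would invoke the fact, recalled in the excerpt, that $\pi : X_{\mathbb{C}} \rightarrow Z$ is a finite surjective morphism of complex algebraic sets. A finite morphism has finite fibers and is surjective, and finite morphisms preserve dimension: if $\pi : X_{\mathbb{C}} \rightarrow Z$ is finite and surjective then $\dim X_{\mathbb{C}} = \dim Z$ (\cite{Sha} I.5.3, Theorem 5, or the standard statement that $\dim$ is invariant under finite surjective morphisms since the corresponding ring extension $\mathbb{C}[Z] = \mathbb{C}[X_{\mathbb{C}}]^G \hookrightarrow \mathbb{C}[X_{\mathbb{C}}]$ is integral, and integral ring extensions preserve Krull dimension). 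This gives $\dim_{\mathbb{C}} X_{\mathbb{C}} = \dim_{\mathbb{C}} Z$. Alternatively, and perhaps more in the spirit of staying over $\mathbb{R}$, one can argue directly: $\pi : X \rightarrow Y$ is a surjective semialgebraic map with finite fibers (by the third point of Remark \ref{indepgen}, the fibers are $G$-orbits, hence of cardinality at most $N$), so $\dim Y = \dim X$ by the semialgebraic statement that a semialgebraic map with finite fibers cannot decrease dimension, combined with the trivial bound $\dim Y \leq \dim X$ coming from surjectivity (\cite{BCR} Theorem 2.8.8).

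Putting these together: $\dim X = \dim X_{\mathbb{C}} = \dim Z = \dim W_{\mathbb{C}} = \dim W = \dim Y$, which is the claim. The only point requiring a little care is making sure the dimension-preservation statement one cites is applied in the correct category — over $\mathbb{C}$ for the finite morphism argument, or over $\mathbb{R}$ in the semialgebraic setting — and that the passage between $X$ and $X_{\mathbb{C}}$ (and between $W$ and $Z$) is justified by the standard fact that complexification of a real algebraic set preserves dimension. I do not expect any genuine obstacle here; the proof is a short chain of invariance properties, and the main thing is to select whichever of the two routes (complex finite morphisms versus semialgebraic finite-fiber maps) the author finds cleanest to reference.
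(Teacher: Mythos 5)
Your proposal is correct and follows essentially the same route as the paper: $\dim Y = \dim W$ via \cite{BCR} Proposition 2.8.2 on Zariski closures, and $\dim W = \dim X$ by complexifying, invoking that real algebraic sets and their complexifications have the same dimension, and using that $\pi : X_{\mathbb{C}} \rightarrow Z$ is finite, so the integral extension $\mathbb{C}[Z] \subset \mathbb{C}[X_{\mathbb{C}}]$ preserves Krull dimension and $Z = W_{\mathbb{C}}$. The alternative semialgebraic finite-fiber argument you sketch is a valid variant but is not needed, as the paper takes the complex finite-morphism route.
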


\begin{proof}
The left-hand equality follows from the fact that $W$ is the Zariski closure of $Y$ (\cite{BCR} Proposition 2.8.2). To establish the right-hand side equality, we use the fact that the dimension of $X$ as a real algebraic set is equal to the dimension of $X_{\mathbb{C}}$ as a complex algebraic set (see \cite{AK} II.2, in particular Proposition 2.2.1.d and Proposition 2.2.5.b). Furthermore, the geometric quotient map $\pi : X_{\mathbb{C}} \rightarrow Z$ is a finite map (see \cite{Sha} I.5.3 Example 1) : this means that the coordinate ring of $X_{\mathbb{C}}$ is integral over the coordinate ring of $Z$, in particular the two coordinate rings have the same Krull dimension, that is $\dim X_{\mathbb{C}} = \dim Z$. Since $Z = W_{\mathbb{C}}$, we obtain $\dim W = \dim X$.
\end{proof}

We then prove the essential fact that the image under the quotient map of a nonsingular point with trivial stabilizer is nonsingular :

\begin{prop} \label{quotientnonsingiffree} Let $x$ be a nonsingular point of $X$. If $G_x = \{e\}$, then $\pi(x)$ is a nonsingular point of $W$. 
\end{prop}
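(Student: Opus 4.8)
The plan is to work with the complexified quotient map $\pi : X_{\mathbb{C}} \to Z$ and reduce the statement to the well-known fact (from classical invariant theory / étale morphisms) that a finite group quotient is locally an isomorphism near a point with trivial stabilizer, provided that point is smooth. First I would pass to the complexification: since $x$ is a nonsingular point of $X$, it is also a nonsingular (in particular, smooth) point of $X_{\mathbb{C}}$ (using \cite{AK} II.2, as already invoked in the proof of Lemma \ref{quotientdimension}), and the stabilizer $G_x$ computed for the $G$-action on $X$ coincides with the stabilizer for the complexified action, since the latter is given by the same real polynomials and by the third item of Remark \ref{indepgen} the fibers of $\pi$ over $X$ are exactly the $G$-orbits. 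Thus $G_x = \{e\}$ for the complexified action too, and $\pi^{-1}(\pi(x)) = G \cdot x$ consists of $|G|$ distinct smooth points of $X_{\mathbb{C}}$, pairwise conjugate under $G$.

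Next I would analyze $\pi$ near the orbit $G \cdot x$. Because $G$ is finite and the orbit has $|G|$ distinct points, one may choose a $G$-invariant affine (Zariski) neighborhood of the orbit on which $G$ acts freely; on such a neighborhood the quotient map is finite, and its degree (generically $|G|$) equals $|G|$ everywhere on the image of the free locus. The key local computation is at the level of completed (or localized) coordinate rings: the extension $\mathcal{O}_{Z,\pi(x)} \hookrightarrow \mathcal{O}_{X_{\mathbb{C}}, x}$ is the inclusion of the ring of $G$-invariants (localized at the orbit), and freeness of the action at $x$ together with smoothness of $x$ forces this to be an étale extension — equivalently, $\pi$ induces an isomorphism of completed local rings $\widehat{\mathcal{O}}_{Z,\pi(x)} \cong \widehat{\mathcal{O}}_{X_{\mathbb{C}},x}$. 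Since $x$ is a smooth point of $X_{\mathbb{C}}$, so is $\pi(x)$ a smooth point of $Z$. Finally, invoking $Z = W_{\mathbb{C}}$ (established in section \ref{secquotalg}) and again the correspondence between nonsingularity of a real algebraic set and of its complexification (\cite{AK} II.2), one concludes that $\pi(x)$ is a nonsingular point of $W$.

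The main obstacle I anticipate is making the "local isomorphism" step precise and self-contained in the real-algebraic setting. One has to be careful that $Z$ is defined as $\mathrm{Spec}$ of the invariant ring, so smoothness of $Z$ at $\pi(x)$ is literally the statement that $\widehat{\mathcal{O}}_{Z,\pi(x)}$ is a formal power series ring; proving the iso of completions requires either Luna's slice theorem (overkill, and normally stated over $\mathbb{C}$ with reductive groups — applicable here since finite groups are reductive and the étale slice at a point with trivial stabilizer is just the formal neighborhood itself), or a direct argument: the inertia group acting on the cotangent space at $x$ is trivial, so averaging gives a $G$-equivariant splitting realizing the completed local ring at $x$ as a free power series ring on which $G$ permutes the $|G|$ branches, whence the invariants form again a power series ring of the same dimension. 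Alternatively, one can cite a standard reference for the statement that $X_{\mathbb{C}} \to X_{\mathbb{C}}/G$ is étale over the image of the free locus when $X_{\mathbb{C}}$ is smooth there (e.g. the purity/étaleness criterion for finite group quotients). I would also double-check that Lemma \ref{quotientdimension} is enough to rule out the degenerate possibility that $Z$ drops dimension at $\pi(x)$ — it gives the right global dimension, and combined with finiteness of $\pi$ and the local iso of completions, the local dimensions match as well.
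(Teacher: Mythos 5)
Your plan is sound, but it follows a genuinely different route from the paper. You complexify, invoke the standard fact that the quotient of an affine variety by a finite group is \'etale (equivalently, induces an isomorphism of completed local rings $\widehat{\mathcal{O}}_{Z,\pi(x)} \cong \widehat{\mathcal{O}}_{X_{\mathbb{C}},x}$) at a point with trivial stabilizer, and then descend regularity from $Z = W_{\mathbb{C}}$ to $W$ at the real point $\pi(x)$. The paper instead stays entirely in the real category and follows \cite{Sha} II.2.1 by hand: starting from a regular system of parameters $u_1,\ldots,u_n$ of $\mathcal{R}_{X,x}$, it multiplies by $h^2$ to force vanishing to second order at the other orbit points, averages over $G$ to get invariant parameters $v_i$, writes $v_i = w_i \circ \pi$, proves directly that the $w_i$ generate $m_{\pi(x)} \subset \mathcal{R}_{W,\pi(x)}$ with independent classes in $m_{\pi(x)}/m_{\pi(x)}^2$, and then computes $\dim \mathcal{R}_{W,\pi(x)} = n$ separately, via the decomposition of $W$ into irreducible components coming from $G$-orbits of components of $X_{\mathbb{C}}$ (plus lemma \ref{quotientdimension}). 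What your approach buys is brevity, provided you are allowed to cite (or reprove via Noether finiteness of $\mathcal{P}(X_{\mathbb{C}})$ over the invariants, transitivity of $G$ on the fibers from remark \ref{indepgen}, and the Reynolds operator commuting invariants with the flat base change $B \to \widehat{B}_q$) the completed-local-ring isomorphism; the burden then shifts to the two real--complex transfers, namely that $x$ nonsingular on $X$ gives $X_{\mathbb{C}}$ smooth of local dimension $n$ at $x$, and conversely that $\mathcal{O}_{Z,\pi(x)} = \mathcal{R}_{W,\pi(x)} \otimes_{\mathbb{R}} \mathbb{C}$ regular of dimension $n = \dim W$ forces $\pi(x)$ nonsingular on $W$ in the sense of \cite{BCR} -- you flag both points, and they do hold (faithfully flat descent of regularity and of dimension at a real point), so there is no gap, only details to write out. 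What the paper's explicit construction buys, beyond being self-contained and purely real, is the concrete isomorphism $m_{\pi(x)}/m_{\pi(x)}^2 \rightarrow m_x/m_x^2$ induced by $\pi$, which is exactly what is reused immediately afterwards to get the local Nash diffeomorphism of proposition \ref{localdiff}; with your argument that cotangent isomorphism is still available, but only after unwinding that the completion isomorphism is the completion of $\pi^*$.
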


\begin{proof} We first follow the proof of \cite{Sha} II.2.1 Example, showing that it works over the real algebraic sets as well. 
\\

Denote by $n$ the dimension of the real algebraic set $X$. Since $x$ is a nonsingular point of $X$, by definition, the local ring $\mathcal{R}_{X,x}$ at $x$ is a regular local ring of dimension $n$. In particular, $\dim_{\mathbb{R}} m_x / m_x^2 = n$, where $m_x$ denotes the maximal ideal of the germs of functions of $\mathcal{R}_{X,x}$ which vanish at $x$ (see \cite{BCR} section 3.3).

By Proposition 3.3.7 of \cite{BCR}, there exist functions $u_1, \ldots, u_n$ in $\mathcal{R}_{X,x}$ generating $m_x$ and such that $\overline{u_1}, \ldots, \overline{u_n}$ is a basis of $m_x / m_x^2$ as a $\mathbb{R}$-vector space (such a family $u_1, \ldots, u_n$ is called a regular system of parameters of $\mathcal{R}_{X,x}$). We can assume that $u_1, \ldots, u_n$ are given by polynomial functions on $X$. 

In the first part of the proof, we are going to construct, from the $u_i$'s, generators $w_1, \ldots, w_n$ of the maximal ideal $m_{\pi(x)}$ of the local ring $\mathcal{R}_{W, \pi(x)}$, whose classes in $m_{\pi(x)}/m_{\pi(x)}^2$ are linearly independent. In particular, the dimension of the dual $m_{\pi(x)}/m_{\pi(x)}^2$ of the Zariski tangent space at $\pi(x)$ is equal to $n$. In a second part, we will prove that the ring $\mathcal{R}_{W, \pi(x)}$ is also $n$-dimensional, so that it is a regular local ring of dimension $n$ i.e. $\pi(x)$ is a nonsingular point of $W$ (the dimension of $W$ is $n$ by previous lemma \ref{quotientdimension}).
\\

The first step will be to construct a regular system of parameters for $\mathcal{R}_{X,x}$ with elements in $\mathcal{P}(X)^G$. We begin by showing that we can assume the $u_i$'s to belong to $m_{\alpha_g(x)}^2$ for all $g \in G$ different from $e$. Indeed, for each $g \in G$ different from $e$, consider a polynomial function $h_g$ on $X$ which verifies $h_g(x) = 1$ and $h_g(\alpha_g(x)) = 0$ : this is possible because the ideal of polynomial functions vanishing at $x$ cannot be equal to the ideal of polynomial functions vanishing at $\alpha_g(x)$ (otherwise, $\alpha_g(x)$ would be equal to $x$). Then denote by $h$ the product of all the polynomial functions $h_g$~: we have $h(x) = 1$ and, for all $g \in G$ different from $e$, $h(\alpha_g(x)) = 0$. Finally, multiply each $u_i$ by the square of $h$ : if we denote $h_0 := h -1 \in m_x$, we obtain 
$$u_i h^2 = u_i + 2 u_i h_0 + {h_0}^2 \equiv u_i \! \mod m_x^2,$$
while $u_i h^2 \in m_{\alpha_g(x)}^2$ if $g \neq e$.

Now, consider the invariant polynomial functions 
$$v_i := \frac{1}{|G|} \sum_{g \in G} u_i \circ \alpha_g$$
of $\mathcal{P}(X)^G$. For each $i$, we have $v_i \in m_x$ (because $u_i \in m_{g(x)}$ for all $g \in G$) and $v_i \equiv \frac{1}{|G|}u_i \! \mod m_x^2$ (because, for $g \neq e$, $u_i \in m_{\alpha_g(x)}^2$ i.e $u_i \circ \alpha_g \in m_x^2$) so that the $v_i$'s form a regular system of parameters of $\mathcal{R}_{X,x}$ as well. 

Let $i \in \{1, \ldots, n\}$. Since the $\mathbb{R}$-algebra $\mathcal{P}(X)^G$ is generated by polynomial functions $p_1, \ldots, p_m$ and since, for all $x \in X$, $\pi(x) = (p_1(x), \ldots, p_m(x))$, there exist a polynomial $w_i \in \mathbb{R}[X_1, \ldots, X_m]$ such that $v_i = w_i \circ \pi$. Because $\pi$ has values in $W$, we are going to consider $w_i$ as a polynomial function of $\mathcal{P}(W)$ and, furthermore, as an element of the maximal ideal $m_{\pi(x)}$ of $\mathcal{R}_{W,\pi(x)}$ (since $w_i(\pi(x)) = v_i(x) = 0$). 

Finally, we prove that the $w_i$'s generate the ideal $m_{\pi(x)}$. Let $h \in m_{\pi(x)} \cap \mathcal{P}(W)$. Then $h \circ \pi \in m_x$ so that there exist $h_1, \ldots, h_n \in \mathcal{R}_{X,x}$ such that $h \circ \pi = \sum_{i = 1}^n h_i v_i$. Moreover, $h \circ \pi \in \mathcal{P}(X)^G$ (because $\pi \circ \alpha_g = \pi$) and consequently
$$h \circ \pi = \frac{1}{|G|} \sum_{g \in G} h \circ \pi \circ \alpha_g = \frac{1}{|G|} \sum_{g \in G} \sum_{i=1}^n (h_i \circ \alpha_g) (v_i \circ \alpha_g) = \sum_{i=1}^n v_i \left(\frac{1}{|G|} \sum_{g \in G} h_i \circ \alpha_g\right)$$
(recall that the $v_i$'s are in $\mathcal{P}(X)^G$). Now, for each $i = 1, \ldots, n$, $\frac{1}{|G|} \sum_{g \in G} h_i \circ \alpha_g \in \mathcal{P}(X)^G$ so there exists $q_i \in \mathcal{P}(W)$ such that $\frac{1}{|G|} \sum_{g \in G} h_i \circ \alpha_g = q_i \circ \pi$. Finally, we obtain
$$h \circ \pi = \sum_{i=1}^n (q_i \circ \pi) (w_i \circ \pi)$$
i.e. $h - \sum_{i=1}^n q_i w_i \in \ker \pi^*$ (recall that $Z := W_{\mathbb{C}} = V(\ker \pi^*)$ and that $\ker \pi^*$ is a radical ideal). Since the polynomial function $h - \sum_{i=1}^n q_i w_i$ has real coefficients and since $W \subset Z$, we have $h - \sum_{i=1}^n q_i w_i \in I(W)$ and $h = \sum_{i=1}^n q_i w_i$ in $\mathcal{P}(W)$.

We then show that the classes $\overline{w_1}, \ldots, \overline{w_n}$ in $m_{\pi(x)}/m_{\pi(x)}^2$ are linearly independent over $\mathbb{R}$~: let $\lambda_1, \ldots, \lambda_n \in \mathbb{R}$ such that $\lambda_1 \overline{w_1} + \ldots + \lambda_n \overline{w_n} = 0$, then compose with $\pi$ to obtain
$$\lambda_1 \overline{w_1 \circ \pi} + \ldots + \lambda_n \overline{w_n \circ \pi} = \lambda_1 \overline{v_1} + \ldots + \lambda_n \overline{v_n} = 0,$$
and use the fact that the $v_i$'s form a regular system of parameters of $\mathcal{R}_{X,x}$.

As a conclusion, the $\overline{w_i}$'s form a basis of the $\mathbb{R}$-vector space $m_{\pi(x)}/m_{\pi(x)}^2$, which is then $n$-dimensional. In order to show that $\pi(x)$ is a nonsingular point of $W$, it remains to show that the dimension of the ring $\mathcal{R}_{W, \pi(x)}$ is $n$ as well.
\\

Recall that $x$ is a nonsingular point of $X$. In particular, there exists a unique irreducible $n$-dimensional component $V$ of $X$ such that $x$ is a nonsingular point of $V$ (\cite{BCR} Proposition 3.3.10). Moreover, if $X = \bigcup_j V^j$ is the decomposition of $X$ into (real) algebraic irreducible components, $X_{\mathbb{C}} = \bigcup_j V^j_{\mathbb{C}}$ is the decomposition of $X_{\mathbb{C}}$ into (complex) algebraic irreducible components (\cite{AK} II.2). Therefore, $x$ belongs to a unique irreducible component of $X_{\mathbb{C}}$, namely $V_{\mathbb{C}}$, which is also $n$-dimensional.

Now, the group $G$ acts on the set of irreducible components of $X_{\mathbb{C}}$, so we can write $X_{\mathbb{C}}$ as the union of $G$-stable algebraic subsets $S^k_{\mathbb{C}}$, where each $S^k_{\mathbb{C}}$ is the union of the irreducible components of $X_{\mathbb{C}}$ being in a same orbit. Then $Z = \bigcup_k \pi\left(S^k_{\mathbb{C}}\right)$ is the decomposition of $Z$ into irreducible components (indeed, each $\pi\left(S^k_{\mathbb{C}}\right)$ is algebraic -- see lemma \ref{quotientinabigger} -- and irreducible, since it is the image by $\pi$ of an irreducible component of $X_{\mathbb{C}}$) and we obtain the decomposition 
$$W = \bigcup_k \left[\pi\left(S^k_{\mathbb{C}}\right) \cap \mathbb{R}^m\right]$$
for $W$ into (real algebraic) irreducible components.

The point $\pi(x)$ then belongs to a unique irreducible component of $W$, namely $W_0 := \pi(V_{\mathbb{C}}) \cap \mathbb{R}^m$, which is $n$-dimensional by lemma \ref{quotientdimension}. As a consequence, $\mathcal{R}_{W, \pi(x)} = \mathcal{R}_{W_0, \pi(x)}$ and, since $W_0$ is irreducible, 
$$\dim \mathcal{R}_{W_0, \pi(x)} = \dim W_0 = n,$$
which the dimension of $m_{\pi(x)}/m_{\pi(x)}^2$, so that $\mathcal{R}_{W, \pi(x)} $ is a regular local ring of dimension $n$, i.e. $\pi(x)$ is a nonsingular point of $W$. 
\end{proof}

In the proof, we moreover showed that $\pi$ induces an isomorphism 
$$m_{\pi(x)}/m_{\pi(x)}^2 \rightarrow m_{x}/m_{x}^2$$
between the duals of the respective Zariski tangent spaces of $W$ at $\pi(x)$ and of $X$ at $x$. As a consequence :

\begin{prop} \label{localdiff} Suppose that $x$ is a nonsingular point of $X$ and that $G_{x} = \{e\}$. There exist a semialgebraic open neighborhood $U$ of $x$ in $X$ and a semialgebraic open neighborhood $U'$ of $\pi(x)$ in $W$ such that $\pi_{| U}$ is a Nash (i.e. semialgebraic and analytic) diffeomorphism from $U$ to~$U'$.
\end{prop}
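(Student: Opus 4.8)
The plan is to deduce the statement from the classical inverse function theorem, feeding in the information already obtained in the previous proposition. By Proposition \ref{quotientnonsingiffree}, the point $\pi(x)$ is nonsingular in $W$, and by the observation made just after its proof, $\pi$ induces an isomorphism $m_{\pi(x)}/m_{\pi(x)}^2 \to m_x/m_x^2$ between the duals of the respective Zariski tangent spaces; moreover $n := \dim X = \dim W$ by Lemma \ref{quotientdimension}. In particular $x$ and $\pi(x)$ are nonsingular points of algebraic sets of the same dimension, and the induced linear map $T_x X \to T_{\pi(x)} W$ on Zariski tangent spaces, being dual to the above isomorphism, is itself an isomorphism.

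First I would replace $X$ and $W$, near $x$ and near $\pi(x)$, by Nash manifolds. Since $x$ is a nonsingular point of $X$, it admits a semialgebraic open neighborhood $V$ in $X$ which is a Nash submanifold of dimension $n$ (see \cite{BCR}, Section 3.3); likewise $\pi(x)$ admits such a neighborhood $V'$ in $W$, also of dimension $n$. Shrinking $V$ to a smaller semialgebraic neighborhood of $x$ if necessary (using continuity of $\pi$), I may assume $\pi(V) \subset V'$. The restriction $\pi_{|V} : V \to V'$ is then a Nash map, being the restriction of a polynomial map.

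Next I would check that $d(\pi_{|V})_x : T_x V \to T_{\pi(x)} V'$ is a linear isomorphism. At a nonsingular point the algebraic cotangent space $m_x/m_x^2$ is canonically identified with the cotangent space of the Nash manifold $V$ at $x$, compatibly with pullback of functions, and similarly at $\pi(x)$; under these identifications $d(\pi_{|V})_x$ is the transpose of the isomorphism $m_{\pi(x)}/m_{\pi(x)}^2 \to m_x/m_x^2$ of the previous proposition, hence an isomorphism. I expect this compatibility between the algebraic and the Nash-manifold descriptions of the differential to be the only delicate point of the argument; the rest is a routine assembly of standard facts about nonsingular points of real algebraic sets.

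Finally I would invoke the inverse function theorem. By the classical analytic inverse function theorem, applied to the analytic map $\pi_{|V}$ between the analytic manifolds $V$ and $V'$ with invertible differential at $x$, there are open neighborhoods $U$ of $x$ in $V$ and $U'$ of $\pi(x)$ in $V'$ such that $\pi_{|U} : U \to U'$ is an analytic diffeomorphism; I may choose $U$ semialgebraic, and then $U' = \pi(U)$ is semialgebraic and open. The map $\pi_{|U}$ is semialgebraic and analytic, hence Nash, and its inverse is analytic by the inverse function theorem and semialgebraic because its graph is the image of the graph of $\pi_{|U}$ under the coordinate flip $(a,b) \mapsto (b,a)$, hence Nash as well. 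Since $U$ is open in $V$, which is open in $X$, and $U'$ is open in $V'$, which is open in $W$, the sets $U$ and $U'$ are semialgebraic open neighborhoods of $x$ in $X$ and of $\pi(x)$ in $W$, and $\pi_{|U}$ is the desired Nash diffeomorphism.
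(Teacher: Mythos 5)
Your proposal is correct and takes essentially the same route as the paper: nonsingularity of $\pi(x)$ together with the tangent-space isomorphism from proposition \ref{quotientnonsingiffree} (and lemma \ref{quotientdimension}), followed by an inverse function theorem. The only difference is that the paper invokes the Nash inverse function theorem (Proposition 8.1.2 of \cite{BCR}) as a black box, whereas you rederive its conclusion from the analytic inverse function theorem plus the graph-flip argument for the semialgebraicity of the local inverse.
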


\begin{proof} In previous proposition \ref{quotientnonsingiffree}, we proved that $\pi(x)$ is a nonsingular point of $W$ and that $\pi$ induces an isomorphism
$$T_x X \rightarrow T_{\pi(x)} W.$$
We then use Proposition 8.1.2 of \cite{BCR} to conclude.
\end{proof}

\subsection{Real geometric quotient of a product}

At some moment in the next part of this paper, we will need to consider the geometric quotient of the cartesian product of real algebraic sets under the action of the product group. Precisely, we have the following property :

\begin{lem} \label{quotientprod} Let $H$ be another finite group. Let $X \subset \mathbb{R}^{d}$ and $X' \subset \mathbb{R}^{d'}$ be two algebraic sets equipped with polynomial actions of $G$ and $H$ respectively, and let $\pi : X \rightarrow Y \subset \mathbb{R}^m$ and $\pi' : X' \rightarrow Y' \subset \mathbb{R}^{m'}$ be the corresponding quotient maps. 

The quotient map corresponding to the induced action of $G \times H$ on $X \times X'$ is
$$\pi \times \pi' : \begin{array}{ccc} 
		X \times X'  & \rightarrow & Y \times Y' \\
		(x,x') & \mapsto & (\pi(x), \pi'(x'))
		\end{array}$$
\end{lem}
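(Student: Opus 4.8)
The strategy is to work at the level of algebras of polynomial functions and reduce everything to the uniqueness part of the construction (remark~\ref{indepgen}): any choice of generators of the invariant algebra produces the same geometric quotient up to polynomial isomorphism, so it suffices to exhibit \emph{one} set of generators for $\mathcal{P}(X \times X')^{G \times H}$ for which the associated quotient map is visibly $\pi \times \pi'$. First I would record the standard identification $\mathcal{P}(X \times X') = \mathcal{P}(X) \otimes_{\mathbb{R}} \mathcal{P}(X')$, under which the action of $G \times H$ is the tensor-product action: $g$ acts on the first factor, $h$ on the second. The key algebraic fact to establish is then
$$\mathcal{P}(X \times X')^{G \times H} = \mathcal{P}(X)^G \otimes_{\mathbb{R}} \mathcal{P}(X')^H.$$

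To prove this identity, the inclusion $\supseteq$ is immediate. For $\subseteq$, I would use the Reynolds operators (averaging over the finite groups, which is available since we work over $\mathbb{R}$, a field of characteristic $0$): the projector $R_G := \frac{1}{|G|}\sum_{g \in G} g\cdot$ onto $\mathcal{P}(X)^G$ and likewise $R_H$ onto $\mathcal{P}(X')^H$. Writing an element of $\mathcal{P}(X \times X')$ as a finite sum $\sum_k f_k \otimes f'_k$ with the $f'_k$ linearly independent, invariance under $G \times \{e\}$ forces each $f_k \in \mathcal{P}(X)^G$ (apply $R_G \otimes \mathrm{id}$), and then invariance under $\{e\} \times H$ forces the $f'_k$-side to lie in $\mathcal{P}(X')^H$ (apply $\mathrm{id} \otimes R_H$); hence the element lies in $\mathcal{P}(X)^G \otimes_{\mathbb{R}} \mathcal{P}(X')^H$.

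Now if $p_1, \ldots, p_m$ generate $\mathcal{P}(X)^G$ and $q_1, \ldots, q_{m'}$ generate $\mathcal{P}(X')^H$ as $\mathbb{R}$-algebras, then the functions $p_1 \otimes 1, \ldots, p_m \otimes 1, 1 \otimes q_1, \ldots, 1 \otimes q_{m'}$ generate the tensor product $\mathcal{P}(X)^G \otimes_{\mathbb{R}} \mathcal{P}(X')^H = \mathcal{P}(X \times X')^{G \times H}$ as an $\mathbb{R}$-algebra. Following the construction of section~\ref{secquotalg} with this particular choice of generators, the quotient map $X \times X' \to \mathbb{R}^{m+m'}$ sends $(x,x') \mapsto (p_1(x), \ldots, p_m(x), q_1(x'), \ldots, q_{m'}(x')) = (\pi(x), \pi'(x'))$, so its image is exactly $\pi(X) \times \pi'(X') = Y \times Y'$. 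By remark~\ref{indepgen} this quotient agrees, up to polynomial isomorphism, with the one obtained from any other set of generators, which establishes the claim. I would also note that the Zariski closure of $Y \times Y'$ in $\mathbb{R}^{m+m'}$ is $W \times W'$, so the statement is compatible with the passage to Zariski closures used elsewhere.

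The only genuine obstacle is the tensor-product invariant identity $\mathcal{P}(X \times X')^{G\times H} = \mathcal{P}(X)^G \otimes_{\mathbb{R}} \mathcal{P}(X')^H$; everything else is bookkeeping. This identity is a standard consequence of the existence of Reynolds operators over $\mathbb{R}$, but one should be slightly careful that $\mathcal{P}(X \times X')$ is genuinely $\mathcal{P}(X) \otimes_{\mathbb{R}} \mathcal{P}(X')$ (true: the Zariski closure of $X \times X'$ in $\mathbb{R}^{d+d'}$ is $\bar{X} \times \bar{X'}$ and $I(X \times X')$ is the ideal generated by $I(X)$ and $I(X')$ — or one simply works over the complexifications where this is the standard product of affine schemes, as in the complexified construction of section~\ref{secquotalg}). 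A clean alternative avoiding tensor products of possibly non-reduced rings is to do the whole argument after complexification: $X_{\mathbb{C}} \times X'_{\mathbb{C}} = (X \times X')_{\mathbb{C}}$, $\mathbb{C}[X_{\mathbb{C}} \times X'_{\mathbb{C}}] = \mathbb{C}[X_{\mathbb{C}}] \otimes_{\mathbb{C}} \mathbb{C}[X'_{\mathbb{C}}]$, deduce the invariant identity over $\mathbb{C}$, and then descend the generators (which can be taken real) exactly as in the construction.
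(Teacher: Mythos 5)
Your proposal is correct and follows essentially the same route as the paper: identify $\mathcal{P}(X \times X')$ with $\mathcal{P}(X) \otimes_{\mathbb{R}} \mathcal{P}(X')$ equivariantly, use $\left(\mathcal{P}(X) \otimes_{\mathbb{R}} \mathcal{P}(X')\right)^{G \times H} = \mathcal{P}(X)^G \otimes_{\mathbb{R}} \mathcal{P}(X')^H$, and take the generators $p_i \otimes 1$, $1 \otimes q_j$ so that the resulting quotient map is visibly $\pi \times \pi'$. The only difference is that you spell out (via Reynolds operators) the invariant-algebra identity that the paper states without proof.
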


\begin{proof} We have $\mathcal{P}(X) \otimes_{\mathbb{R}} \mathcal{P}(X') \cong \mathcal{P}(X \times X')$ via the equivariant (with respect to the induced actions of $G \times H$) isomorphism $\overline{f} \otimes \overline{h} \mapsto \overline{f \times h}$ (notice that $\mathbb{C}[X_{\mathbb{C}}] \otimes \mathbb{C}[X'_{\mathbb{C}}] \cong \mathbb{C}[X_{\mathbb{C}} \times X'_{\mathbb{C}}]$ via the same equivariant isomorphism).

Furthermore, $\left(\mathcal{P}(X) \otimes_{\mathbb{R}} \mathcal{P}(X')\right)^{G \times H} = \mathcal{P}(X)^G \otimes_{\mathbb{R}} \mathcal{P}(X')^H$ so that, if $p_1, \ldots, p_{m}$ are generators of $\mathcal{P}(X)^G$ and $q_1, \ldots, q_{m'}$ are generators of $\mathcal{P}(X')^H$, $p_1 \otimes 1, \ldots, p_{m} \otimes 1, 1 \otimes q_1, \ldots, 1 \otimes q_{m'}$ are generators of $\left(\mathcal{P}(X) \otimes_{\mathbb{R}} \mathcal{P}(X')\right)^{G \times H}$ and 
$$\begin{array}{ccc} 
		X \times X'  & \rightarrow & Y \times Y' \\
		(x,x') & \mapsto & (p_1(x) , \ldots, p_{m}(x) , q_1(x'), \ldots, q_{m'}(x'))
		\end{array}$$
is the quotient map.
\end{proof}

\section{Quotient of an arc-symmetric set by a free finite group action} \label{sectionquotientAS}

\subsection{Quotient of a semialgebraic set by a polynomial finite group action}

Let $X \subset \mathbb{R}^d$ be a semialgebraic set. We suppose that the Zariski closure $\overline{X}^{\mathcal{Z}}$ of $X$ is equipped with a polynomial action of $G$ which globally stabilizes $X$ ($G$ still denotes a finite group).

Let $\pi : \overline{X}^{\mathcal{Z}} \rightarrow Y$ be the geometric quotient of $\overline{X}^{\mathcal{Z}}$ by $G$.

\begin{de} We call the restriction $\pi : X \rightarrow \pi(X)$, or simply $\pi(X)$, the geometric quotient of $X$ by $G$.
\end{de}

\begin{rem} As $\pi\left(\overline{X}^{\mathcal{Z}}\right)$, the geometric quotient of $X$ by $G$ is well-defined up to polynomial isomorphism. Moreover, if the action of $G$ on $\overline{X}^{\mathcal{Z}}$ comes from a polynomial action on a real algebraic set $X'$ containing $\overline{X}^{\mathcal{Z}}$, the geometric quotient of $X$ can be obtained as the image of $X$ under the quotient map of $X'$ : see remark \ref{indepgen}, lemma \ref{indepemb} and lemma \ref{quotientinabigger}. 
\end{rem}

\begin{ex}  Consider the action of the involution $\sigma : (x_1, x_2) \mapsto (-x_1,x_2)$ on the upper half-plane $\{x_2 \geq 0\}$ of $\mathbb{R}^2$. Its geometric quotient is the first quadrant $\{y_1 \geq 0, \, y_2 \geq 0\}$ of $\mathbb{R}^2$ (see example \ref{firstexquotient}).
\end{ex}

\begin{lem} \label{quotientproper} The continuous map $\pi : X \rightarrow \pi(X)$ is proper, closed and open. Furthermore, it is homeomorphic to the topological quotient map $q : X \rightarrow X/G$.
\end{lem}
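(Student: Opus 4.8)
The plan is to prove the statement first for the algebraic quotient map $\pi : \overline{X}^{\mathcal{Z}} \to W$ (with $W$ the real Zariski closure of $Y$), where the three topological properties come out easily because $\overline{X}^{\mathcal{Z}}$ and $W$ are closed in their ambient affine spaces, and then to descend everything to the restriction $\pi : X \to \pi(X)$. The descent is the only genuinely subtle step: since the semialgebraic set $X$ need not be closed in $\mathbb{R}^d$, properness and closedness on $\overline{X}^{\mathcal{Z}}$ do not a priori restrict to $X$, and one must use that $G$ \emph{globally stabilizes} $X$ to see that fibres and preimages lying over $\pi(X)$ never leave $X$.

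\textbf{Step 1: $\pi : \overline{X}^{\mathcal{Z}} \to W$ is proper and closed.} Recall that $\mathcal{P}(\overline{X}^{\mathcal{Z}})$ is module-finite over $\mathcal{P}(\overline{X}^{\mathcal{Z}})^G = \mathcal{P}(W)$, since every $a \in \mathcal{P}(\overline{X}^{\mathcal{Z}})$ is a root of the monic polynomial $\prod_{g \in G}(T - g \cdot a)$, whose coefficients are $G$-invariant (cf. \cite{Sha} I.5.3). In particular each coordinate function $x_i$ of $\overline{X}^{\mathcal{Z}} \subset \mathbb{R}^d$ satisfies a monic equation over $\mathcal{P}(W)$. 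For $K \subset W$ compact the coefficients of this equation, being polynomial functions composed with $\pi$, are bounded on $\pi^{-1}(K)$, so by the elementary root bound the $x_i$ are bounded there; thus $\pi^{-1}(K)$ is bounded, and it is closed in $\mathbb{R}^d$ because $\overline{X}^{\mathcal{Z}}$ is, hence compact. So $\pi$ is proper, and as $W$ is closed in $\mathbb{R}^m$ (hence locally compact Hausdorff), a proper continuous map into it is closed.

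\textbf{Step 2: descent to $\pi : X \to \pi(X)$.} This map is continuous and surjective by construction, and by remark \ref{indepgen} its fibre through a point is its $G$-orbit; since $G$ stabilizes $X$, for $y \in \pi(X)$ the whole fibre of $\pi$ over $y$ computed in $\overline{X}^{\mathcal{Z}}$ already lies in $X$. Hence for $K \subset \pi(X)$ compact we get $\pi^{-1}(K) \cap X = \pi^{-1}(K) \cap \overline{X}^{\mathcal{Z}}$, which is compact by Step 1, so $\pi : X \to \pi(X)$ is proper. For closedness, write a closed $F \subset X$ as $F = E \cap X$ with $E$ closed in $\overline{X}^{\mathcal{Z}}$; the same orbit argument gives $\pi(F) = \pi(E) \cap \pi(X)$ (if $\pi(x) = \pi(x')$ with $x \in X$ and $x' \in E$, then $x' = \alpha_g(x) \in X$, so $x' \in F$), and $\pi(E)$ is closed in $W$ by Step 1, so $\pi(F)$ is closed in $\pi(X)$. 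A continuous closed surjection is a quotient map, whence openness follows formally: for $U \subset X$ open, $\pi^{-1}(\pi(U)) = \bigcup_{g \in G} \alpha_g(U)$ is open (each $\alpha_g$ restricts to a homeomorphism of $X$), so $\pi(U)$ is open.

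\textbf{Step 3: comparison with $q : X \to X/G$.} As $\pi$ is constant on $G$-orbits (since $\pi \circ \alpha_g = \pi$), the universal property of the quotient map $q$ factors it as $\pi = \phi \circ q$ with $\phi : X/G \to \pi(X)$ continuous; $\phi$ is surjective because $\pi$ is, injective because the fibres of $\pi$ are exactly the orbits, and closed because $\phi(C) = \pi(q^{-1}(C))$ with $q^{-1}(C)$ closed in $X$ and $\pi$ closed. A continuous closed bijection being a homeomorphism, $\phi$ identifies $q$ with $\pi$. I expect Step 2 to be the crux; Steps 1 and 3 are routine.
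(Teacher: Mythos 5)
Your proof is correct, and it reaches properness by a genuinely different route than the paper. The paper first replaces the embedding by an equivariant one in which $G$ acts orthogonally (linearly, by permutation matrices) on the whole ambient space $\mathbb{R}^d$ (its lemma \ref{linearisationfinigroup}), and then proves that the quotient map $\pi : \mathbb{R}^d \to \mathbb{R}^m$ is proper via Schwarz's trick: the invariant norm $\phi(x) = \langle x,x\rangle$ is a proper invariant polynomial, hence factors as $Q \circ \pi$, and $\pi^{-1}(K) \subset \phi^{-1}(Q(K))$ is compact. You instead stay on $\overline{X}^{\mathcal{Z}}$ and use integrality of $\mathcal{P}(\overline{X}^{\mathcal{Z}})$ over the invariant ring (the monic polynomial $\prod_{g}(T - g\cdot a)$) plus a root bound to bound the coordinate functions on $\pi^{-1}(K)$; this is the standard ``finite morphisms are proper'' argument, it avoids the linearization step entirely, and it is the more intrinsic of the two (the paper's argument buys a very concrete proper invariant function, yours works for any real algebraic set with a polynomial $G$-action without changing the embedding). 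Your Step 2 is also slightly more careful than the paper at the one delicate point: the paper passes from properness of $\pi$ on the ambient set to properness of $\pi|_X$ by simply invoking ``the restriction of a proper map is proper'', which as stated needs the saturation $\pi^{-1}(\pi(X)) = X$ that you make explicit via the orbit description of the fibres and the $G$-stability of $X$; your closedness, openness (via the quotient-map characterization rather than the paper's complement argument with a $G$-stable open set) and the identification with $q : X \to X/G$ are then essentially the same as in the paper.
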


In order to prove this lemma, we will suppose without loss of generality that $X$ is globally stabilized under an orthogonal linear action :

\begin{lem} \label{linearisationfinigroup} Let $V \subset \mathbb{R}^d$ be a real algebraic set on which $G$ acts via polynomial isomorphisms. There exists an equivariant polynomial isomorphism $\varphi : V \rightarrow V'$, where $V' \subset \mathbb{R}^D$ is a real algebraic set equipped with a linear action of $G$ on $\mathbb{R}^D$ given by permutation matrices.
\end{lem}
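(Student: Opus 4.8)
\emph{Proof proposal.} The plan is to replace a set of polynomial generators of $\mathcal{P}(V)$ by the (finite) union of all their $G$-orbits, and then to embed $V$ via the resulting $G$-invariant family, the group simply permuting the coordinates of the target space. Concretely, first I would pick polynomial functions $f_1, \dots, f_n$ generating the $\mathbb{R}$-algebra $\mathcal{P}(V)$ — for instance the restrictions to $V$ of the coordinate functions of $\mathbb{R}^d$ — and then set, for each $(g,i) \in G \times \{1, \dots, n\}$,
$$ f_{g,i} := f_i \circ \alpha_g^{-1} \in \mathcal{P}(V). $$
The family $(f_{g,i})_{(g,i)}$ still generates $\mathcal{P}(V)$ (it contains $f_1 = f_{e,1}, \dots, f_n = f_{e,n}$) and is globally invariant under the action of $G$ on $\mathcal{P}(V)$, the group $G$ acting by permutation of the index set $G \times \{1,\dots,n\}$.

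Next I would put $D := |G|\, n$, index the coordinates of $\mathbb{R}^D$ by the pairs $(g,i)$, and consider the polynomial map $\varphi : V \to \mathbb{R}^D$, $\varphi(x) := (f_{g,i}(x))_{(g,i)}$, together with its image $V' := \varphi(V)$. Since the $f_{g,i}$ generate $\mathcal{P}(V)$, every coordinate function of $\mathbb{R}^d$ restricts on $V$ to a polynomial $P_j$ in the $f_{g,i}$, so the polynomial map $\psi := (P_1, \dots, P_d) : \mathbb{R}^D \to \mathbb{R}^d$ satisfies $\psi \circ \varphi = \mathrm{id}_V$; in particular $\varphi$ is injective, and an elementary check shows $V' = \psi^{-1}(V) \cap \{\, y \in \mathbb{R}^D \mid \varphi(\psi(y)) = y \,\}$, which exhibits $V'$ as a real algebraic subset of $\mathbb{R}^D$ and $\varphi : V \to V'$ as a polynomial isomorphism with polynomial inverse $\psi_{|V'}$.

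Finally I would equip $\mathbb{R}^D$ with the linear action of $G$ given, in the standard basis $(e_{(g,i)})$, by the permutation matrices $\ell_h : e_{(g,i)} \mapsto e_{(hg,i)}$ (equivalently $\ell_h(y)_{(g,i)} = y_{(h^{-1}g,i)}$), which is readily seen to be a genuine left action, and check equivariance of $\varphi$: for $h, g \in G$,
$$ \varphi(\alpha_h(x))_{(g,i)} = f_i\bigl(\alpha_g^{-1}(\alpha_h(x))\bigr) = f_i\bigl(\alpha_{g^{-1}h}(x)\bigr) = f_{h^{-1}g,i}(x) = \varphi(x)_{(h^{-1}g,i)} = \ell_h(\varphi(x))_{(g,i)}. $$
Surjectivity of $\varphi : V \to V'$ then forces $\ell_h(V') = \varphi(\alpha_h(V)) = V'$, so $V'$ is $G$-stable for the permutation action $\ell$, and $\varphi$ is the desired equivariant polynomial isomorphism. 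There is no substantial obstacle here; the only real points of care are bookkeeping ones — that orbit-completing the generators preserves the generating property, that the displayed formula for $V'$ indeed makes it algebraic, and above all that the index-permutation conventions are arranged so that $\ell$ is a \emph{left} action and $\varphi$ is equivariant, a left/right mismatch being the easiest slip in this type of argument.
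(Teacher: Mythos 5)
Your proposal is correct and is essentially the paper's own argument: with the natural choice $f_i =$ coordinate functions, your orbit-completed embedding $x \mapsto (f_i(\alpha_g^{-1}(x)))_{(g,i)}$ is exactly the paper's map $x \mapsto (x, \alpha_{g_2^{-1}}(x), \ldots, \alpha_{g_N^{-1}}(x))$ into $V \times \cdots \times V$ with $G$ permuting the factors. The only cosmetic difference is how the image is shown to be algebraic (your retraction $\psi$ and the fixed-point equation, versus the paper's explicit graph equations $y_1 \in V$, $y_i = \alpha_{g_i}(y_1)$), and both work.
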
  

\begin{proof}

Indeed, denote $G := \{g_1, \ldots, g_N\}$ where $g_1$ is the identity element of $G$ and consider the morphism

$$\varphi : \begin{array}{ccc} 
		V & \rightarrow & V \times \cdots \times V \\
		x & \mapsto & (x, \alpha_{g_2^{-1}}(x), \ldots, \alpha_{g_N^{-1}}(x))
		\end{array}$$

Now, equip the cartesian product $V \times \cdots \times V$ with the action of $G$ given by the permutations induced by the product in $G$. The morphism $\varphi$ is then equivariant. 

Furthermore, $\varphi$ induces a polynomial isomorphism between $V$ and its image $\varphi(V)$ :  the direct image of $V$ by $\varphi$ is an algebraic subset of $\mathbb{R}^d \times \cdots \times \mathbb{R}^d$ given by the equations $y_1 \in V$, $y_i = \alpha_{g_i}(y_1)$ (recall that each $\alpha_i$ is a polynomial isomorphism).
\end{proof}
 
\begin{proof}[Proof of lemma \ref{quotientproper}] Up to polynomial isomorphism, we can then suppose that $X$ is globally stabilized by a linear orthogonal action of $G$ on $\mathbb{R}^d$, and $\pi : X \rightarrow \pi(X)$ is then the restriction of the corresponding quotient map $\pi = (p_1, \ldots, p_m) : \mathbb{R}^d \rightarrow \mathbb{R}^m$ on $\mathbb{R}^d$ (lemma \ref{indepemb} and lemma \ref{quotientinabigger}). 

The map $\pi : \mathbb{R}^d \rightarrow \mathbb{R}^m$ is proper. Indeed, let us recall the argument of \cite{Sch75} : the map
$$\phi : \begin{array}{ccc}
\mathbb{R}^d & \rightarrow & \mathbb{R} \\
x = (x_1, \ldots, x_d) & \mapsto & \langle x,x \rangle  = x_1^2+ \ldots + x_d^2 
\end{array}$$
is a proper polynomial map, which is invariant under right composition with the orthogonal action of $G$ on $\mathbb{R}^d$. Therefore, $\phi \in \mathcal{P}(\mathbb{R}^d)^G$ and there exists a polynomial $Q \in \mathbb{R}[Y_1,\ldots, Y_m]$ such that $\phi = Q(p_1, \ldots, p_m) = Q \circ \pi$. Now, let $K$ be a compact set in $\mathbb{R}^m$. Then $Q(K)$ is a compact of $\mathbb{R}$ and $\phi^{-1}(Q(K))$ is a compact set of $\mathbb{R}^d$ (because $\phi$ is proper). Finally, $\pi^{-1}(K)$ is a closed subset of $\mathbb{R}^d$ (because $\pi$ is continuous) included in $\phi^{-1}(Q(K))$, hence it is compact as well.

The restriction of a proper map being a proper map, the map $\pi : X \rightarrow \pi(X)$ is also proper. 
\\

Since $\mathbb{R}^m$ is a Hausdorff locally compact space, the proper map $\pi : \mathbb{R}^d \rightarrow \mathbb{R}^m$ is closed. Now, consider a closed subset $F \cap X$ of $X$, where $F$ is a closed subset of $\mathbb{R}^d$. Since $X$ is stable under the action of $G$, $\pi(F \cap X) = \pi(F) \cap \pi(X)$ and this is a closed subset of $\pi(X)$ since $\pi(F)$ is a closed subset of $\mathbb{R}^m$ ($\pi$ is closed on $\mathbb{R}^d$). Consequently, $\pi : X \rightarrow \pi(X)$ is a closed map.

It is open as well. Indeed, take $U$ to be an open subset of $X$, then $\pi(U) = \pi(\bigcup_{g \in G} g \cdot U)$ so we can assume $U$ to be globally stable under the action of $G$. Now, $\pi(X \setminus U)$ is a closed subset of $\pi(X)$, but $\pi(X \setminus U) = \pi(X) \setminus \pi(U)$ (since $U$ is stable under the action of $G$) therefore $\pi(U)$ is an open subset of $\pi(X)$. 
\\

Now, let us show that the map $\pi : X \rightarrow \pi(X)$ is homeomorphic to the topological quotient map $q : X \rightarrow X/G$.

The map $\pi : X \rightarrow \pi(X)$ is a continuous surjective map such that if $x,y \in X$, $\pi(x) = \pi(y)$ if and only if there exists $g \in G$ such that $x = g \cdot y$. As a consequence, there exists a continuous bijective map $\overline{\pi} : X/G \rightarrow \pi(X)$ such that $\pi = \overline{\pi} \circ q$. The map $\overline{\pi}$ is a homeomorphism since $\pi$ is an open (or closed) map. 
\end{proof}

\subsection{$G$-$\mathcal{AS}$-sets} 

In this paragraph, we assume that $X$ is an $\mathcal{AS}$-set $X$ of $\mathbb{P}^d(\mathbb{R})$ in the sense of \cite{KP} (Definition 3.1 and Definition 2.7). We recall below the definitions of an arc-symmetric set of $\mathbb{P}^d(\mathbb{R})$ and of an $\mathcal{AS}$-set of $\mathbb{P}^d(\mathbb{R})$. 

\begin{de} \label{defas} Let $A$ be a subset of $\mathbb{P}^d(\mathbb{R})$.
\begin{itemize}
	\item $A$ is called an arc-symmetric subset of $\mathbb{P}^d(\mathbb{R})$ if $A$ is semialgebraic and if, for every real analytic arc $\gamma : (-1 , 1) \rightarrow \mathbb{P}^d(\mathbb{R})$, the inclusion $\gamma((-1,0)) \subset A$ implies the entire inclusion $\gamma((-1,1)) \subset A$. 
	\item $A$ is called an $\mathcal{AS}$-set $X$ of $\mathbb{P}^d(\mathbb{R})$ if $A$ is Boolean combination of arc-symmetric sets of $\mathbb{P}^d(\mathbb{R})$.
\end{itemize}
\end{de}

\begin{rem} \begin{itemize} 
	\item The arc-symmetric sets of $\mathbb{P}^d(\mathbb{R})$ are closed (with respect to strong topology) hence compact (the arc-symmetric sets of $\mathbb{P}^d(\mathbb{R})$ are the closed $\mathcal{AS}$-sets of $\mathbb{P}^d(\mathbb{R})$).
	\item A set $A \subset \mathbb{P}^d(\mathbb{R})$ is an $\mathcal{AS}$-set if and only if, for every real analytic arc $\gamma : (-1 , 1) \rightarrow \mathbb{P}^d(\mathbb{R})$ such that $\gamma((-1,0)) \subset A$, there exists $\epsilon > 0$ such that $\gamma((0, \epsilon)) \subset A$.
	\item The real algebraic sets, or more generally any Zariski open subset of a real algebraic set, are $\mathcal{AS}$-sets, as well as their compact connected components. 
\end{itemize}
\end{rem}

We will suppose furthermore that the projective Zariski closure of $X$ in $\mathbb{P}^d(\mathbb{R})$ is equipped with an action of $G$ by biregular isomorphisms, and that $X$ is globally stabilized under this action : we will say that $X$ is a $G$-$\mathcal{AS}$-set (see \cite{GF}, paragraph 3.1.1).

Now, recall that the real algebraic variety $\mathbb{P}^d(\mathbb{R})$ can be biregularly embedded into a compact algebraic subset of $\mathbb{R}^{(d+1)^2}$ (\cite{BCR}, Theorem 3.4.4), so that $X$ can be supposed to be, up to equivariant biregular isomorphism, an $\mathcal{AS}$-subset of $\mathbb{R}^{(d+1)^2}$ such that its (affine) Zariski closure (in $\mathbb{R}^{(d+1)^2}$) is compact and equipped with an action of $G$ by biregular isomorphisms which globally preserves $X$ ($X$ is a boolean combination of compact arc-symmetric sets of $\mathbb{R}^{(d+1)^2}$ : see Remark 3.5 of \cite{KP}). For the sake of simplicity, let us denote again $\mathbb{R}^{(d+1)^2}$ by $\mathbb{R}^d$.

Moreover, since $G$ is finite, we can suppose, again up to equivariant biregular isomorphism, that the action of $G$ on $\overline{X}^{\mathcal{Z}}$ is linear, given by permutation of coordinates : use the biregular analog of lemma \ref{linearisationfinigroup}.

\begin{rem} A real algebraic set (or more generally a Zariski open subset of a real algebraic set) equipped with an action of $G$ via biregular isomorphisms is equivariantly biregularly isomorphic to a $G$-$\mathcal{AS}$-set : use the biregular analog of lemma \ref{linearisationfinigroup} and extend the action on $\mathbb{R}^D$ by permutation matrices to an action on $\mathbb{P}^D(\mathbb{R})$. Such a set will also be called a $G$-$\mathcal{AS}$-set and we will implicitly confound it with its isomorphic image. 
\end{rem} 
 
We can then consider the image of the semialgebraic set $X$ under the quotient map $\pi : \mathbb{R}^d \rightarrow Y \subset \mathbb{R}^m$ (the action of $G$ on $\overline{X}^{\mathcal{Z}}$ is induced from the action on $\mathbb{R}^d$ : lemma \ref{quotientinabigger}).

We will now recall the proof of an important result of \cite{GF} (Proposition 3.15) : if we suppose $X$ to be compact and that the action of $G$ on $X$ is free, then $\pi(X)$ is also a compact arc-symmetric set :

\begin{prop} \label{quotientarcsym} Suppose that $X$ is compact (in particular, $X$ is a compact arc-symmetric subset of $\mathbb{R}^d$) and that, for all $x \in X$, $G_x = \{e\}$. Then $\pi(X)$ is a compact arc-symmetric subset of $\mathbb{R}^m$.
\end{prop}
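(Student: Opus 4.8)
The plan is to verify the defining property of arc-symmetric sets directly for $\pi(X)$: given a real analytic arc $\gamma : (-1,1) \to \mathbb{P}^m(\mathbb{R})$ (or rather, since we work affinely, $\gamma : (-1,1) \to \mathbb{R}^m$) with $\gamma((-1,0)) \subset \pi(X)$, we must show $\gamma((-1,1)) \subset \pi(X)$. First I would note that $\pi(X)$ is semialgebraic (image of a semialgebraic set under a polynomial map) and compact (continuous image of a compact set, using that $X$ is compact), so only the arc-lifting condition needs checking. Since $\pi(X)$ is semialgebraic of dimension $n = \dim X$ (lemma \ref{quotientdimension}), we may assume, after shrinking the interval and reparametrizing, that $\gamma((-1,0))$ avoids the proper subset of $\pi(X)$ consisting of images of singular points and of points with nontrivial stabilizer (this subset is of dimension $< n$, being the image of a lower-dimensional semialgebraic set, and $\gamma$ being nonconstant analytic cannot have a subarc inside it unless it is entirely inside it, in which case a separate, easier argument applies by induction on dimension).

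The heart of the argument is then a local arc-lifting step. Fix $t_0 \in (-1,0)$ with $x_0 := $ some preimage of $\gamma(t_0)$ a nonsingular point of $X$ with $G_{x_0} = \{e\}$. By proposition \ref{localdiff}, $\pi$ restricts to a Nash diffeomorphism from a semialgebraic open neighborhood $U$ of $x_0$ in $X$ onto a semialgebraic open neighborhood $U'$ of $\gamma(t_0)$ in $W$ (and hence onto a neighborhood of $\gamma(t_0)$ in $\pi(X)$, since near such a point $\pi(X)$ is open in $W$ of full dimension). Composing $\gamma$ with the inverse Nash diffeomorphism $(\pi_{|U})^{-1}$ on a small interval around $t_0$ produces a real analytic arc $\tilde\gamma$ into $X$ lifting $\gamma$ locally. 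The key point is that this local lift extends: the set of $t \in (-1,1)$ for which $\gamma$ admits a real analytic lift to the compact arc-symmetric set $X$ on a neighborhood of $t$ is open, and by compactness of $X$ together with the arc-symmetry of $X$ itself, such local lifts can be glued and continued. Concretely, I would show that if $\gamma((-1, s)) $ lifts analytically to an arc $\tilde\gamma$ in $X$ for some $s \le 1$, then $\tilde\gamma(t)$ has a limit in $X$ as $t \to s^-$ (by compactness of $X$, any accumulation point lies in $X$; uniqueness follows since $\pi$ is finite-to-one and $\tilde\gamma$ is continuous), call it $x_s$, and then since $X$ is arc-symmetric the arc $\tilde\gamma$ extended by $x_s$ and beyond — obtained by lifting $\gamma$ past $s$ using either proposition \ref{localdiff} again at $x_s$ if $x_s$ is nonsingular with trivial stabilizer, or the arc-symmetry of $X$ directly — continues to lie in $X$. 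Pushing forward by $\pi$ gives $\gamma((-1,1)) \subset \pi(X)$.

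I expect the main obstacle to be the continuation of the lift across points where the local diffeomorphism statement \ref{localdiff} fails, i.e. across parameter values $s$ where the lifted arc hits a singular point of $X$ or a point with nontrivial stabilizer. At such an $s$, one cannot simply invert $\pi$; instead one must invoke that $X$ is itself arc-symmetric, so that any real analytic arc in $\mathbb{R}^d$ whose negative-time part lies in $X$ has its positive-time part in $X$ too — but this requires first producing a candidate analytic lift of $\gamma$ past $s$, which is where a careful argument (e.g. Puiseux/curve selection applied to the analytic set $\pi^{-1}(\operatorname{im}\gamma)$, or passing to the complexification $X_{\mathbb{C}} \to Z$ where $\pi$ is finite and one can lift analytically by the theory of finite morphisms and then descend to the real points) is needed. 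One clean way to organize this: work with the complexified finite map $\pi : X_{\mathbb{C}} \to Z$, lift the (complexified) arc analytically near $s$ using finiteness and normalization, check the lift is real on the negative-time part, hence real everywhere by the identity principle, and conclude it lands in the compact arc-symmetric set $X$. Once the lift is globally defined on $(-1,1)$ into $X$, its composition with $\pi$ equals $\gamma$, giving $\gamma((-1,1)) \subset \pi(X)$, which completes the proof.
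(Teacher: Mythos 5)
Your core mechanism --- invert $\pi$ locally at a point with trivial stabilizer, pull the arc back, use the arc-symmetry of $X$ and analytic continuation --- is the same germ as the paper's argument, but your execution has a genuine gap at exactly the place you flag yourself: continuing the lift across a parameter value $s$ where $\widetilde\gamma$ limits to a singular point $x_s$ of $X$. Neither of your proposed fixes is established. ``The arc-symmetry of $X$ directly'' cannot be invoked until a candidate analytic lift past $s$ exists, and ``one can lift analytically by the theory of finite morphisms'' is false in general: through a finite map an analytic arc usually lifts only after a ramified base change (think of $z \mapsto z^2$ and the arc $\gamma(t) = t$). What would rescue the complexification route is precisely that $x_s$ has trivial stabilizer, so that $X_{\mathbb{C}} \rightarrow Z$ is an analytic covering near $x_s$; you never use this, so as written the continuation step does not go through. (Two smaller points: the part of your reduction dealing with points of nontrivial stabilizer is vacuous, since the hypothesis is that every point of $X$ has trivial stabilizer; and proposition \ref{localdiff} is stated for real algebraic sets, so applying it to a neighborhood of $x_0$ ``in $X$'' when $X$ is only arc-symmetric needs justification.)

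The paper avoids all of this with two ideas you are missing. First, by the setup preceding the proposition (the biregular analog of lemma \ref{linearisationfinigroup}), the action of $G$ may be assumed linear on the ambient $\mathbb{R}^d$, and proposition \ref{localdiff} is applied with the algebraic set $\mathbb{R}^d$ itself: every point of $\mathbb{R}^d$ is nonsingular and every $x \in X$ has trivial stabilizer, so $\pi$ is a local Nash diffeomorphism of the ambient space onto a neighborhood in $W$ near \emph{every} point of $X$ --- singular points of $X$ never cause trouble; the lifted arc lives in $\mathbb{R}^d$, and it is the arc-symmetry of $X$, applied to $\widetilde\gamma$ with $\widetilde\gamma((-\epsilon',0]) \subset X$, that forces its extension into $X$. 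Second, there is no need to continue the lift over all of $(-1,1)$: it suffices to verify the $\mathcal{AS}$ criterion, namely $\gamma((0,\epsilon)) \subset \pi(X)$ for some small $\epsilon > 0$ (one local lift at $t=0$ plus analytic continuation of $\pi \circ \widetilde\gamma$ gives this), because $\pi(X)$ is compact, hence a closed $\mathcal{AS}$-set, hence arc-symmetric. This removes your gluing and open-closed continuation argument, the dimension-count reduction, and the induction fallback entirely.
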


\begin{proof} $\pi(X)$ is a semialgebraic subset of $\mathbb{R}^m$ as the image of a semialgebraic set by the polynomial map $\pi$. It is furthermore compact since $X$ is compact and $\pi$ is continuous.

Now, considering the standard embedding $\mathbb{R}^m \subset \mathbb{P}^m(\mathbb{R})$, we are going to prove that $\pi(X)$ is an $\mathcal{AS}$-set of $\mathbb{P}^m(\mathbb{R})$. Consider a real analytic map $\gamma : (-1,1) \rightarrow \mathbb{P}^m(\mathbb{R})$ such that $\gamma((-1,0)) \subset \pi(X)$. First, notice that $y := \gamma(0) \in \pi(X)$ because $\pi(X)$ is closed. Let $x \in X \subset \mathbb{R}^d$ such that $y = \pi(x)$. Since $x$ is a nonsingular point of $\mathbb{R}^d$ and $G_x = \{e\}$, by proposition \ref{localdiff}, there exists a semialgebraic open neighborhood $U$ of $x$ in $\mathbb{R}^d$ and a semialgebraic open neighborhood $U'$ of $y = \pi(x)$ in the Zariski closure $W$ of $Y$, such that $\pi_{|U}$ is a Nash diffeomorphism from $U$ to $U'$. Denote by $\eta$ the inverse map.

Let $\epsilon' > 0$ such that $\gamma((-\epsilon', 0]) \subset U'$. Composing $\gamma_{|(-\epsilon', 0]}$ with $\eta$, we obtain an analytic map $\eta \circ \gamma : (-\epsilon', 0] \rightarrow U \subset \mathbb{R}^d$ that we can extend into an analytic map $\widetilde{\gamma} : (-\epsilon', \epsilon) \rightarrow U$, with $0<\epsilon < 1$. Since $\gamma((-\epsilon',0]) \subset \pi(X)$, we have $\widetilde{\gamma}((-\epsilon',0]) = \eta \circ \gamma((-\epsilon',0]) \subset X$ and, since $X$ is arc-symmetric, $\widetilde{\gamma}((-\epsilon',\epsilon)) \subset X$.
 
We finally apply $\pi$ to obtain a real analytic arc $\pi \circ \widetilde{\gamma} : (-\epsilon', \epsilon) \rightarrow \mathbb{R}^m  \subset \mathbb{P}^m(\mathbb{R})$ which coincides with the real analytic arc $\gamma : (-1,1) \rightarrow \mathbb{P}^m(\mathbb{R})$ on $(-\epsilon', 0)$. As a consequence, by analytic continuation, $\gamma((-\epsilon', \epsilon) ) = \pi \circ \widetilde{\gamma}((-\epsilon', \epsilon)) \in \pi(X)$. As a conclusion, $\pi(X)$ is an $\mathcal{AS}$-set of $\mathbb{P}^m(\mathbb{R})$.

Since it is compact, it is an arc-symmetric subset of $\mathbb{P}^m(\mathbb{R})$ and, since $\pi(X) \subset \mathbb{R}^m$, $\pi(X)$ is an arc-symmetric subset of $\mathbb{R}^m$ (Remark 3.5 of \cite{KP}).
\end{proof}

\begin{rem} \begin{enumerate}
	\item Under the same hypotheses, if we suppose $X$ to be an algebraic set, the quotient $\pi(X)$ is not algebraic in general. Consider the example of Remark 3.16 of \cite{GF} : the quotient of the compact algebraic subset $\left\{y^2 + (x^2-2)(x^2-1)(x^2+1) = 0\right\}$ of $\mathbb{R}^2$ by the free involution $(x,y) \mapsto (-x,y)$ is the compact connected component $\{x \geq 0\}$ of the algebraic set $\left\{y^2 + (x-2)(x-1)(x+1) = 0\right\}$ (it is non-algebraic arc-symmetric set).
	\item If the action of $G$ on $X$ is free but $X$ is not compact, the quotient $\pi(X)$ is not an $\mathcal{AS}$-set in general. Consider the third example of \ref{secexquotient} : the half-hyperbola is not an $\mathcal{AS}$-set (see also remark \ref{remdifemb} below). 
\end{enumerate}
\end{rem}

We can actually establish a slightly more general result. If $A$ is a subset of $\mathbb{P}^d(\mathbb{R})$, denote by $\overline{A}^{\mathcal{AS}}$ the smallest arc-symmetric set of $\mathbb{P}^d(\mathbb{R})$ containing $A$ (it is the intersection of all arc-symmetric subsets of $\mathbb{P}^d(\mathbb{R})$ containing $A$). If $A \subset \mathbb{R}^d$ and if the Zariski closure $\overline{A}^{\mathcal{Z}}$ of $A$ in $\mathbb{R}^d$ is compact, then $\overline{A}^{\mathcal{AS}} \subset \overline{A}^{\mathcal{Z}}$.

\begin{cor} \label{quotientas} Suppose that the action of $G$ on the $\mathcal{AS}$-closure $\overline{X}^{\mathcal{AS}}$ of $X$ is free. Then $\pi(X)$ is an $\mathcal{AS}$-set of $\mathbb{P}^m(\mathbb{R})$ contained in $\mathbb{R}^m$ (in the following, we will simply say $\mathcal{AS}$-set of $\mathbb{R}^m$).
\end{cor}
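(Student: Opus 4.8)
The plan is to reduce the general case to the compact case already handled in Proposition \ref{quotientarcsym}. Write $\overline{X} := \overline{X}^{\mathcal{AS}}$ for the arc-symmetric closure of $X$ inside $\mathbb{P}^d(\mathbb{R})$; since $X$ is by assumption a $G$-$\mathcal{AS}$-set whose affine Zariski closure is compact, $\overline{X}$ is a compact arc-symmetric subset of $\mathbb{R}^d$, and it is $G$-stable because the $\mathcal{AS}$-closure of a $G$-stable set is $G$-stable (the $\mathcal{AS}$-closure is characterized intrinsically and $G$ acts by biregular, hence arc-preserving, isomorphisms). By hypothesis $G$ acts freely on $\overline{X}$, so Proposition \ref{quotientarcsym} applies to $\overline{X}$: the image $\pi(\overline{X})$ is a compact arc-symmetric subset of $\mathbb{R}^m$. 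Moreover, by Lemma \ref{quotientinabigger} the restriction $\pi|_{\overline{X}^{\mathcal{Z}}}$ is (up to polynomial isomorphism) the quotient map of $\overline{X}^{\mathcal{Z}}$, so $\pi(X)$ is genuinely the image of $X$ under this quotient map and $\pi(X) \subset \pi(\overline{X})$.

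Next I would show that $\pi(X)$ is an $\mathcal{AS}$-set of $\mathbb{P}^m(\mathbb{R})$ by checking the arc criterion (the second bullet of the Remark following Definition \ref{defas}): given a real analytic arc $\gamma : (-1,1) \rightarrow \mathbb{P}^m(\mathbb{R})$ with $\gamma((-1,0)) \subset \pi(X)$, I must produce $\epsilon > 0$ with $\gamma((0,\epsilon)) \subset \pi(X)$. Since $\pi(\overline{X})$ is arc-symmetric and $\gamma((-1,0)) \subset \pi(X) \subset \pi(\overline{X})$, we already get $\gamma((-1,1)) \subset \pi(\overline{X})$; in particular $y := \gamma(0) \in \pi(\overline{X})$. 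Pick $x \in \overline{X}$ with $\pi(x) = y$. As in the proof of Proposition \ref{quotientarcsym}, $x$ is a nonsingular point of $\mathbb{R}^d$ with $G_x = \{e\}$, so Proposition \ref{localdiff} furnishes a semialgebraic open neighborhood $U$ of $x$ in $\mathbb{R}^d$, a semialgebraic open neighborhood $U'$ of $y$ in $W$, and the inverse Nash diffeomorphism $\eta : U' \rightarrow U$. For small $\epsilon' > 0$ the restriction $\eta \circ \gamma|_{(-\epsilon',0]}$ is a real analytic arc into $U$, which extends to a real analytic arc $\widetilde{\gamma} : (-\epsilon', \epsilon) \rightarrow U$. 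Now $\widetilde{\gamma}((-\epsilon',0)) = \eta(\gamma((-\epsilon',0)))$; since $\gamma((-\epsilon',0)) \subset \pi(X)$ and $\eta \circ \pi = \mathrm{id}$ on $U$, this lies in $X$ (here I use that $\pi^{-1}(\pi(X)) \cap \overline{X} = \bigcup_{g \in G} g\cdot X = X$ by remark \ref{indepgen} and $G$-stability of $X$). The set $X$ need not be arc-symmetric, but it \emph{is} an $\mathcal{AS}$-set, so the arc criterion gives $\epsilon'' \in (0,\epsilon)$ with $\widetilde{\gamma}((0,\epsilon'')) \subset X$. Applying $\pi$ and invoking analytic continuation (as $\pi \circ \widetilde{\gamma}$ agrees with $\gamma$ on $(-\epsilon',0)$), we conclude $\gamma((0,\epsilon'')) = \pi(\widetilde{\gamma}((0,\epsilon''))) \subset \pi(X)$, which is the required property. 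Finally, since $\pi(X) \subset \mathbb{R}^m$, Remark 3.5 of \cite{KP} lets us view it as an $\mathcal{AS}$-set of $\mathbb{R}^m$.

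The main obstacle — and the place where the generalization over Proposition \ref{quotientarcsym} really bites — is the passage from ``$\widetilde{\gamma}$ lands in $\overline{X}$'' to ``the positive half of $\widetilde{\gamma}$ lands in $X$''. In the compact case one used arc-symmetry of $X$ to get the whole arc at once; here $X$ is only $\mathcal{AS}$, so one must be careful to phrase everything in terms of the one-sided arc criterion and to keep track of the shrinking interval parameters ($\epsilon' \mapsto \epsilon'' $). A secondary point requiring care is the identity $\eta(\gamma((-\epsilon',0))) \subset X$: this needs that the $\pi$-fibre over a point of $\pi(X)$ inside $\overline{X}$ meets $X$ precisely in a $G$-orbit contained in $X$, which follows from remark \ref{indepgen}(iii) together with the $G$-stability of $X$; one should note that the local section $\eta$ selects one specific point of that orbit, and since the whole orbit lies in $X$ this is harmless. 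Everything else is a routine transcription of the argument for Proposition \ref{quotientarcsym}.
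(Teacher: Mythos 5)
Your argument is correct, but it follows a genuinely different route from the paper's. The paper reduces to Proposition \ref{quotientarcsym} by induction on dimension: since $\pi$ separates orbits and $X$ is $G$-stable, one has $\pi(X) = \pi\left(\overline{X}^{\mathcal{AS}}\right) \setminus \pi\left(\overline{X}^{\mathcal{AS}} \setminus X\right)$; the first set is compact arc-symmetric by Proposition \ref{quotientarcsym}, the second is an $\mathcal{AS}$-set by the induction hypothesis because $\overline{X}^{\mathcal{AS}} \setminus X$ is a free $G$-$\mathcal{AS}$-set of strictly smaller dimension (Proposition 3.3 of \cite{KP}), and $\pi(X)$ is then a Boolean combination of $\mathcal{AS}$-sets. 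You instead verify the one-sided arc criterion for $\pi(X)$ directly, re-running the local lifting argument from the proof of Proposition \ref{quotientarcsym}: arc-symmetry (closedness) of $\pi\left(\overline{X}^{\mathcal{AS}}\right)$ serves only to place $\gamma(0)$ in the image of $\overline{X}^{\mathcal{AS}}$, freeness on the closure gives the trivial stabilizer needed for Proposition \ref{localdiff}, the identity $\pi^{-1}(\pi(X)) = X$ (orbit separation plus $G$-stability, which the paper uses implicitly in its set-difference identity) puts the left half of the lifted arc in $X$, and the one-sided criterion for the $\mathcal{AS}$-set $X$ replaces the arc-symmetry used in the compact case. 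Your route avoids the induction and the dimension inequality from \cite{KP} at the cost of repeating (and slightly strengthening) the local analysis inside Proposition \ref{quotientarcsym}; the paper's route is shorter once that proposition is taken as a black box. Two small points to tidy up: you should note explicitly that $\pi(X)$ is semialgebraic (immediate, as the image of a semialgebraic set under the polynomial map $\pi$), since the arc criterion characterizes $\mathcal{AS}$-sets only among semialgebraic sets; and the intersection with $\overline{X}$ in your identity $\pi^{-1}(\pi(X)) \cap \overline{X} = X$ is superfluous, as the full preimage $\pi^{-1}(\pi(X))$ already equals $X$, which is what you actually need for the points $\eta(\gamma(t))$.
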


\begin{proof} First, notice that the action of $G$ on $\overline{X}^{\mathcal{Z}}$ globally stabilizes $\overline{X}^{\mathcal{AS}}$ (because it stabilizes $X$ and the image of a compact $\mathcal{AS}$-set by a regular isomorphism is a compact $\mathcal{AS}$-set).

Now, even if $X$ is not anymore supposed to be compact, $\overline{X}^{\mathcal{AS}}$ is, and, since for all $x \in \overline{X}^{\mathcal{AS}}$, $G_x = \{e\}$, $\pi\left(\overline{X}^{\mathcal{AS}}\right)$ is a compact arc-symmetric subset of $\mathbb{R}^m$.

We then proceed by induction on the dimension of $X$ using that $\dim\left( \overline{X}^{\mathcal{AS}} \setminus X\right) < \dim(X)$ (Proposition 3.3 of \cite{KP}), and that $\pi\left( \overline{X}^{\mathcal{AS}} \setminus X \right) = \pi\left( \overline{X}^{\mathcal{AS}}\right) \setminus \pi(X)$ i.e. $\pi(X) = \pi\left( \overline{X}^{\mathcal{AS}}\right) \setminus \pi\left( \overline{X}^{\mathcal{AS}} \setminus X \right)$. 
\end{proof}

\begin{de} \label{deffreegasset} If the action of $G$ on $\overline{X}^{\mathcal{AS}}$ is free, we will say that $X$ is a free $G$-$\mathcal{AS}$-set.
\end{de}

\subsection{Functoriality of the quotient of a free $G$-$\mathcal{AS}$-set with respect to equivariant continuous maps with $\mathcal{AS}$-graph} \label{functorialityquotientfree}

Consider an equivariant continuous map with $\mathcal{AS}$-graph $f : X \rightarrow X'$ between two free $G$-$\mathcal{AS}$-sets. As in the previous paragraph, we can suppose (because a biregular isomorphism is in particular an analytic isomorphism with semialgebraic graph) that $X$ and $X'$ are $\mathcal{AS}$-sets of some $\mathbb{R}^{d}$ and $\mathbb{R}^{d'}$ respectively, such that their affine Zariski closures are compact, and that the respective actions of $G$ are given by permutations of coordinates.

If $\pi : \mathbb{R}^d \rightarrow Y$ and $\pi' : \mathbb{R}^{d'} \rightarrow Y'$ are the respective geometric quotients of $\mathbb{R}^d$ and $\mathbb{R}^{d'}$ by~$G$, we define a continuous map with $\mathcal{AS}$-graph
$$f_{/G} : \pi(X) \rightarrow \pi'(X')$$
between the $\mathcal{AS}$-sets $\pi(X)$ and $\pi'(X)$, such that the following diagram commutes :
$$\begin{array}{ccc} 
X & \stackrel{f}{\longrightarrow} & X'\\
 ~ \downarrow {\scriptstyle \pi} & & ~ \downarrow {\scriptstyle \pi'}\\
\pi(X) & \stackrel{f_{/G}}{\longrightarrow} & \pi'(X')
\end{array}$$  

Precisely, if $y = \pi(x) \in \pi(X)$, we set $f_{/G}(y) := \pi'(f(x))$ (notice that this definition is independent of the chosen preimage of $y$ since $f$ is equivariant and because the fiber of $\pi$ at $y$ is the orbit of $x$ under the action of $G$ on $X$).

\begin{rem} \label{quotientmappingpoly} If $X$ and $X'$ are algebraic (that is $X = \overline{X}^{\mathcal{Z}}$ and $X' = \overline{X'}^{\mathcal{Z}}$) and $\psi : X \rightarrow X'$ is an equivariant polynomial map, then $\psi_{/G} : \pi(X) \rightarrow \pi'(X')$ is a polynomial map as well (it is the map $\rho$ of lemma \ref{indepemb}).
\end{rem}

We first prove that $f_{/G} : \pi(X) \rightarrow \pi'(X')$ is continuous :

\begin{prop} \label{quotientcont} Suppose that $f$ is a $\mathcal{C}^k$-map with $k \in \mathbb{N}$, $k = \infty$ or $k = \omega$. Then $f_{/G}$ is a $\mathcal{C}^k$-map as well.
\end{prop}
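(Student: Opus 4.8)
The plan is to reduce the statement to a local question at each point of $\pi(X)$ and there invoke Proposition \ref{localdiff}. First I would fix $y_0 = \pi(x_0) \in \pi(X)$ and note that, since the action of $G$ on $\overline{X}^{\mathcal{AS}}$ is free, every $x \in \overline{X}^{\mathcal{AS}}$ is a nonsingular point of $\mathbb{R}^d$ with trivial stabiliser; likewise for $X'$. By Proposition \ref{localdiff}, there is a semialgebraic open neighborhood $U$ of $x_0$ in $\mathbb{R}^d$ and a semialgebraic open neighborhood $U' = \pi(U)$ of $y_0$ in $W = \overline{Y}^{\mathcal{Z}}$ on which $\pi_{|U} \colon U \to U'$ is a Nash diffeomorphism; denote its inverse by $\eta$. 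Likewise pick a Nash-diffeomorphic chart $\pi'_{|V} \colon V \to V'$ around $f(x_0)$ in $\mathbb{R}^{d'}$, with inverse $\eta'$. Shrinking $U$ (and hence $U'$) using continuity of $f$, we may assume $f(U) \subset V$.

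The key point is then the local formula. On $U'$ we have, for $y \in U' \cap \pi(X)$,
$$f_{/G}(y) = \pi'\bigl(f(\eta(y))\bigr) = \pi' \circ f \circ \eta \,(y),$$
because $\eta(y)$ is a preimage of $y$ in $X$ and $f_{/G}$ is well-defined independently of the chosen preimage. Here $\eta$ is Nash on $U'$, $f$ is $\mathcal{C}^k$ on $U$ (restrict the given $\mathcal{C}^k$-map), and $\pi'$ is polynomial, hence $\mathcal{C}^\omega$; so the composite $\pi' \circ f \circ \eta$ is a $\mathcal{C}^k$-map on $U'$, and its restriction to $U' \cap \pi(X)$ is a $\mathcal{C}^k$-map agreeing with $f_{/G}$ on that neighborhood of $y_0$. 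Since $\mathcal{C}^k$-ness is a local property and $y_0 \in \pi(X)$ was arbitrary, $f_{/G}$ is a $\mathcal{C}^k$-map on all of $\pi(X)$. (For $k \in \mathbb{N}$ or $k=\infty$ one should recall that a map on an arbitrary subset is $\mathcal{C}^k$ iff it is locally the restriction of a $\mathcal{C}^k$-map on an open set; for $k=\omega$, one uses the analogous local model of analytic maps on semialgebraic sets, which is exactly the setting of Proposition \ref{localdiff}.)

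The main obstacle I anticipate is purely bookkeeping: making sure the two charts are compatible — i.e. that after shrinking $U$ one genuinely has $f(U)\subset V$ — and checking that the resulting local expression $\pi'\circ f\circ\eta$ really computes $f_{/G}$ rather than some other lift. The latter is handled precisely by the remark preceding the proposition: the fiber of $\pi$ over $y$ is the $G$-orbit of any preimage, and $\pi'\circ f$ is $G$-invariant because $f$ is equivariant, so $\pi'(f(x))$ does not depend on the choice of $x\in\pi^{-1}(y)$. One should also note that continuity of $f_{/G}$ (the case we need first) follows either from this same local argument with $k=0$, or directly from Lemma \ref{quotientproper}: $\pi$ is a topological quotient map, so a map out of $\pi(X)$ is continuous iff its composition with $\pi$ — namely $\pi'\circ f$ — is continuous, which it is.
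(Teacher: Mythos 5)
Your proposal is correct and follows essentially the same route as the paper: reduce to a local statement via Proposition \ref{localdiff}, write $f_{/G} = \pi' \circ f \circ \pi_{|U}^{-1}$ on $U' \cap \pi(X)$, and use that $\pi'$ is polynomial (the extra chart for $\pi'$ around $f(x_0)$ is harmless but not needed). The paper's proof is exactly this argument, stated more briefly.
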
 

\begin{proof} Let $y = \pi(x) \in \pi(X)$. Denote by $W$ the Zariski closure of $Y$. Since $x$ is a nonsingular point of $\mathbb{R}^d$ and $G_{x} = \{e\}$, according to proposition \ref{localdiff}, there exist a semialgebraic open neighborhood $U$ of $x$ in $\mathbb{R}^d$ and a semialgebraic open neighborhood $U'$ of $\pi(x)$ in $W$ such that $\pi_{| U}$ is a Nash diffeomorphism from $U$ to $U'$. As a consequence, on the open neighborhood $U' \cap \pi(X)$ of $y$, $f_{/G}$ can be described as $\pi' \circ f \circ \pi_{| U}^{-1}$, hence the result (recall that $\pi'$ is a polynomial map).
\end{proof}

\begin{rem} By the same argument, we can show that, if $f : X \rightarrow X'$ is arc-analytic (that is sends real analytic arcs on real analytic arcs), then so is $f_{/G} : \pi(X) \rightarrow \pi'(X')$.
\end{rem}

We will now show that the graph of $f_{/G}$ is an $\mathcal{AS}$-set, by describing it as the image of a free $(G \times G)$-$\mathcal{AS}$-set by a geometric quotient :

\begin{prop} \label{quotientmappingAS} Suppose $Y \subset \mathbb{R}^m$ and $Y' \subset \mathbb{R}^{m'}$.
The graph of $f_{/G}$ is an $\mathcal{AS}$-set of $\mathbb{R}^{m+m'}$.
\end{prop}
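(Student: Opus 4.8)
The plan, as already announced in the text, is to exhibit $\Gamma_{f_{/G}}$ as the geometric quotient of a suitable free $(G \times G)$-$\mathcal{AS}$-set. Write $\Gamma_f = \{(x, f(x)) : x \in X\} \subseteq X \times X'$ for the graph of $f$: it is an $\mathcal{AS}$-set of $\mathbb{R}^{d+d'}$ (its affine Zariski closure is compact, being contained in $\overline{X}^{\mathcal{Z}} \times \overline{X'}^{\mathcal{Z}}$). Since $f$ is merely equivariant, $\Gamma_f$ is stable under the diagonal copy of $G$ in $G \times G$ but not under all of $G \times G$, so I would first pass to its saturation
\[
\widetilde{\Gamma} := \bigcup_{(g,h) \in G \times G} (g,h)\cdot\Gamma_f = \bigl\{(g\cdot x,\, h\cdot f(x)) : x \in X,\ (g,h)\in G\times G\bigr\} \subseteq X \times X',
\]
which is globally stable under the permutation action of $G \times G$ on $\mathbb{R}^d \times \mathbb{R}^{d'}$. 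As a finite union of images of the $\mathcal{AS}$-set $\Gamma_f$ under the biregular automorphisms $(g,h)$, the set $\widetilde{\Gamma}$ is again an $\mathcal{AS}$-set of $\mathbb{R}^{d+d'}$.

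Next I would verify that $\widetilde{\Gamma}$ is a \emph{free} $(G \times G)$-$\mathcal{AS}$-set. Because $\widetilde{\Gamma} \subseteq X \times X'$ and the product $\overline{X}^{\mathcal{AS}} \times \overline{X'}^{\mathcal{AS}}$ of two compact arc-symmetric sets is again a compact arc-symmetric set, one gets $\overline{\widetilde{\Gamma}}^{\mathcal{AS}} \subseteq \overline{X}^{\mathcal{AS}} \times \overline{X'}^{\mathcal{AS}}$; and on the latter the action of $G \times G$ is the product of the $G$-action on $\overline{X}^{\mathcal{AS}}$ and the $G$-action on $\overline{X'}^{\mathcal{AS}}$, both free since $X$ and $X'$ are free $G$-$\mathcal{AS}$-sets, hence $G\times G$ acts freely on $\overline{X}^{\mathcal{AS}} \times \overline{X'}^{\mathcal{AS}}$ and a fortiori on $\overline{\widetilde{\Gamma}}^{\mathcal{AS}}$. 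By Corollary \ref{quotientas}, the geometric quotient of $\widetilde{\Gamma}$ by $G \times G$ is then an $\mathcal{AS}$-set of $\mathbb{R}^{m+m'}$; and by Lemma \ref{quotientprod} together with the remark following Lemma \ref{quotientinabigger}, this quotient is exactly $(\pi \times \pi')(\widetilde{\Gamma})$.

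Finally I would identify $(\pi \times \pi')(\widetilde{\Gamma})$ with $\Gamma_{f_{/G}}$. For $(g,h) \in G \times G$ and $x \in X$, the $G$-invariance of $\pi$ and $\pi'$ gives $(\pi \times \pi')(g\cdot x, h\cdot f(x)) = (\pi(x), \pi'(f(x)))$, so $(\pi\times\pi')(\widetilde{\Gamma}) = (\pi\times\pi')(\Gamma_f) = \{(\pi(x), \pi'(f(x))) : x \in X\}$; using the defining relation $\pi' \circ f = f_{/G}\circ\pi$ and the surjectivity of $\pi : X \to \pi(X)$, this equals $\{(y, f_{/G}(y)) : y \in \pi(X)\}$, that is, the graph of $f_{/G}$. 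Hence $\Gamma_{f_{/G}}$ is an $\mathcal{AS}$-set of $\mathbb{R}^{m+m'}$. The step I expect to need the most care is the inclusion $\overline{\widetilde{\Gamma}}^{\mathcal{AS}} \subseteq \overline{X}^{\mathcal{AS}} \times \overline{X'}^{\mathcal{AS}}$, which rests on the fact that a product of compact arc-symmetric sets is arc-symmetric; once this is granted, the freeness of the $G \times G$-action — the hypothesis needed to invoke Corollary \ref{quotientas} — is immediate, and what remains is the routine diagram chase of the last paragraph.
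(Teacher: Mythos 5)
Your proposal is correct and follows essentially the same route as the paper: saturate $\Gamma_f$ under $G\times G$, observe via the inclusion of its $\mathcal{AS}$-closure in $\overline{X}^{\mathcal{AS}} \times \overline{X'}^{\mathcal{AS}}$ that this saturation is a free $(G\times G)$-$\mathcal{AS}$-set, and apply corollary \ref{quotientas} together with lemma \ref{quotientprod}, identifying the image under $\pi\times\pi'$ with $\Gamma_{f_{/G}}$. You merely spell out the final identification and the freeness check in more detail than the paper does.
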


\begin{proof} The graph $\Gamma_{f_{/G}}$ is
$$\Gamma_{f_{/G}} = \left\{\left(\pi(x), \pi'(x')\right) \in Y \times Y' ~|~x \in X, x' \in X', x' = f(x)\right\}.$$
Hence, it is the image of the graph $\Gamma_f$ of $f$ under the geometric quotient $\pi \times \pi' : \mathbb{R}^d \times \mathbb{R}^{d'} \rightarrow Y \times Y'$ of $\mathbb{R}^d \times \mathbb{R}^{d'}$ under the product action of $G \times G$ (lemma \ref{quotientprod}). 

Notice that $\Gamma_f$ is not globally stable under the action of $G \times G$. However, we also have 
$$\Gamma_{f_{/G}} = \pi\left( \bigcup_{(g,g') \in G \times G} \alpha_g \times \alpha_{g'} \left( \Gamma_f\right)\right),$$  
the union $\bigcup_{(g,g') \in G \times G} \alpha_g \times \alpha_{g'} \left( \Gamma_f\right)$ being a free $(G \times G)$-$\mathcal{AS}$-set of $\mathbb{R}^d \times \mathbb{R}^{d'}$ (the arc-symmetric closure of this $\mathcal{AS}$-set is included in $\overline{X}^{\mathcal{AS}} \times \overline{X'}^{\mathcal{AS}}$). We conclude by corollary \ref{quotientas}.

\end{proof}

\begin{rem} \label{quotientmappingproper} If $f$ is a proper map, so is $f_{/G}$. Indeed, let $K$ be a compact subset of $\pi'(X')$, then
$$f_{/G}^{-1}(K) = \pi\left(f^{-1}\left({\pi'}^{-1}(K)\right)\right)$$
is a compact subset of $\pi(X)$ since $\pi'$ is proper (lemma \ref{quotientproper}), $f$ is proper and $\pi$ is continuous.
\end{rem}

The operation which associates to $f$ the continuous map with $\mathcal{AS}$-graph $f_{/G}$ is functorial :

\begin{lem} \label{quotientmappingfunctorial} Let $X''$ be a free $G$-$\mathcal{AS}$-set and let $h : X' \rightarrow X''$ be an equivariant continuous map with $\mathcal{AS}$-graph. 

Then the equivariant continuous composition $h \circ f : X \rightarrow X''$ has $\mathcal{AS}$-graph and 
$$(h \circ f)_{/G} = h_{/G} \circ f_{/G}.$$ 
Furthermore, if $id_X$ denotes the identity map on $X$, $(id_X)_{/G} = id_{\pi(X)}$.
\end{lem}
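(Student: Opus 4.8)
The plan is to verify the two stated identities directly from the pointwise definition of the quotient map $(\,\cdot\,)_{/G}$, using only that the fibres of each geometric quotient map $\pi$ are exactly the $G$-orbits (remark \ref{indepgen}, third item) and that $f$, $h$ are equivariant. First I would check that $h \circ f$ is again an equivariant continuous map with $\mathcal{AS}$-graph: equivariance and continuity of a composition are immediate, and the graph $\Gamma_{h \circ f}$ is the image of $\Gamma_f \times_{X'} \Gamma_h$ under a projection, hence is $\mathcal{AS}$ because the class of $\mathcal{AS}$-sets is stable under the relevant semialgebraic operations (fibre product of $\mathcal{AS}$-graphs and linear projection of a bounded $\mathcal{AS}$-set); alternatively this is exactly the hypothesis under which $(h \circ f)_{/G}$ was defined in paragraph \ref{functorialityquotientfree}, so it suffices to observe that $h \circ f$ falls into that framework (both $X$ and $X''$ free $G$-$\mathcal{AS}$-sets). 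Since $X''$ is a free $G$-$\mathcal{AS}$-set we may, as before, embed all three sets equivariantly with linear actions, and let $\pi, \pi', \pi''$ denote the corresponding geometric quotient maps.

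Next I would prove $(h \circ f)_{/G} = h_{/G} \circ f_{/G}$ by evaluating both sides at an arbitrary point $y \in \pi(X)$. Write $y = \pi(x)$ with $x \in X$; this is possible by surjectivity of $\pi$ onto $\pi(X)$. By definition $(h \circ f)_{/G}(y) = \pi''\big( (h \circ f)(x) \big) = \pi''\big( h(f(x)) \big)$. On the other hand $f_{/G}(y) = \pi'(f(x))$ by definition, and then $h_{/G}\big( \pi'(f(x)) \big) = \pi''\big( h(f(x)) \big)$, again by definition of $h_{/G}$ applied to the point $\pi'(f(x)) \in \pi'(X')$ with chosen preimage $f(x) \in X'$. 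Hence both sides equal $\pi''(h(f(x)))$ and agree. The only subtlety — and the reason the well-definedness remarks in \ref{functorialityquotientfree} are invoked — is that these formulas are stated in terms of chosen preimages; but the values do not depend on the choice precisely because the fibre of each $\pi$ over a point of the image is a single $G$-orbit and $f$, $h$ are equivariant, which is exactly the independence already recorded when $f_{/G}$, $h_{/G}$, $(h \circ f)_{/G}$ were defined. So no new argument is needed there beyond citing that definition.

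Finally, for $(\mathrm{id}_X)_{/G} = \mathrm{id}_{\pi(X)}$: for $y = \pi(x) \in \pi(X)$ the definition gives $(\mathrm{id}_X)_{/G}(y) = \pi\big( \mathrm{id}_X(x) \big) = \pi(x) = y$, so the map is the identity on $\pi(X)$. I do not expect any real obstacle here: the statement is essentially a bookkeeping verification that the pointwise recipe defining $(\,\cdot\,)_{/G}$ is a functor. The one point worth stating carefully, rather than a genuine difficulty, is that $h \circ f$ indeed has $\mathcal{AS}$-graph so that $(h \circ f)_{/G}$ is even defined — this follows because the graph of a composition of maps with $\mathcal{AS}$-graph has $\mathcal{AS}$-graph (the graph $\Gamma_{h\circ f}$ is obtained from $\Gamma_f$ and $\Gamma_h$ by a fibre product and a coordinate projection, operations preserving the $\mathcal{AS}$ class for subsets of compact ambient spaces), so the framework of paragraph \ref{functorialityquotientfree} applies verbatim and all three quotient maps live in the same commutative-diagram picture.
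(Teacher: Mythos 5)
Your proof of the two identities is exactly the paper's: for $y=\pi(x)$ one computes $h_{/G}\circ f_{/G}(\pi(x)) = h_{/G}\bigl(\pi'(f(x))\bigr) = \pi''\bigl(h(f(x))\bigr) = (h\circ f)_{/G}(\pi(x))$, the independence of the chosen preimage being already built into the definition of the quotient maps, and the statement for $id_X$ is immediate in the same way. On this core part there is nothing to add.

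The one point that is not correct as written is your justification that $h\circ f$ has $\mathcal{AS}$-graph. The class of $\mathcal{AS}$-sets is \emph{not} stable under coordinate projections, even for compact (bounded) sets: the projection of the unit circle onto the first coordinate is the interval $[-1,1]$, which is not arc-symmetric (an analytic arc such as $t\mapsto 1+t$ enters it on $(-1,0)$ and leaves it immediately after $t=0$); the paper's own half-hyperbola remark is another instance of images failing to be $\mathcal{AS}$. So ``fibre product followed by a linear projection'' does not by itself yield the claim. What saves the argument is injectivity: the set $\bigl\{(x,f(x),h(f(x)))\;:\;x\in X\bigr\} = (\Gamma_f\times\mathbb{R}^{d''})\cap(\mathbb{R}^{d}\times\Gamma_h)$ is $\mathcal{AS}$, the projection $(x,y,z)\mapsto(x,z)$ restricted to it is injective (the middle coordinate is determined by $x$), and the image of an $\mathcal{AS}$-set under an injective map with $\mathcal{AS}$-graph is again an $\mathcal{AS}$-set (Kurdyka--Parusi\'nski). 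The paper's proof silently omits this verification, so your instinct to address it is sound; you just need the injectivity argument rather than a general stability of $\mathcal{AS}$ under projections.
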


\begin{proof} Denote by $\pi'' : \mathbb{R}^{d''} \rightarrow Y''$ the geometric quotient of $\mathbb{R}^{d''}$ by $G$. Then, for any $\pi(x) \in \pi(X)$,
$$h_{/G} \circ f_{/G} (\pi(x)) = h_{/G} (\pi'(f(x))) = \pi''(h \circ f(x)).$$
\end{proof}

\begin{rem} \label{remdifemb}
Let $i$ another equivariant regular affine embeddings of the (compact) Zariski closure of $X$. Then the Nash equivariant isomorphism $i : X \rightarrow i(X)$ induces a Nash isomorphism $i_{/G}$ between the respective quotients of $X$ and $i(X)$ by $G$ (use proposition \ref{quotientcont} and recall that the image of a semialgebraic set under a quotient map is semialgebraic). Therefore, the geometric quotient of free $G$-$\mathcal{AS}$-sets is well-defined, that is unique up to Nash isomorphism.

By the same arguments as above, we can prove that an equivariant Nash diffeomorphism (for instance an equivariant biregular isomorphism) between two semialgebraic sets equipped with free actions of $G$ induces a Nash diffeomorphism (which has $\mathcal{AS}$-graph in particular) between their geometric quotients. Therefore, the geometric quotient of semialgebraic sets equipped with a free action of $G$ is unique up to Nash isomorphism (see also Theorem 3.4 of~\cite{KP}). 
\end{rem}

\section{Equivariant homologies and cohomologies} \label{secequivhomcohom}

Keep $G$ to be a finite group. From now on, we want to consider and study invariants of $G$-$\mathcal{AS}$-sets. First, we choose the ``equivariant'' homologies and cohomologies with which we are going to work.

\subsection{Equivariant singular homology and cohomology}

Let $X$ be a topological space on which $G$ acts via homeomorphisms. We consider the equivariant singular homology 
$$H^G_*(X) := H_*(X \times_G E_G, \mathbb{Z}_2)$$
and equivariant singular cohomology 
$$H_G^*(X) := H^*(X \times_G E_G, \mathbb{Z}_2)$$
of $X$ with coefficients in $\mathbb{Z}_2 := \mathbb{Z}/2 \mathbb{Z}$, defined using the Borel construction (\cite{Borel}) :
\begin{itemize}
	\item $H_*(\, \cdot \,) := H_*( \, \cdot \, , \mathbb{Z}_2)$ and $H^*(\, \cdot \,) := H^*(\,\cdot \,, \mathbb{Z}_2)$ stand for the singular homology and cohomology with coefficients in $\mathbb{Z}_2$,
	\item $E_G$ is the total space of a universal principal $G$-bundle $E_G \rightarrow B_G$,
	\item $X \times_G EG$ denotes the topological quotient of the product space $X \times EG$ by the diagonal action of $G$.
\end{itemize}

\begin{rem} \label{remequivsinghom} \begin{enumerate}
	\item The equivariant singular homology and cohomology are independent of the choice of a universal $G$-bundle. 
	\item Let $E$ be a contractible topological space equipped with a free action of $G$. Since $G$ is a finite group, the action of $G$ on $E$ is proper and the quotient map $E \rightarrow E/G$ is a universal principal $G$-bundle.
	\item Since $E_G$ is a contractible space, $X \times E_G$ is equivariantly homotopic to $X$ and $X \times_G E_G = (X \times E_G)/G$ is called the homotopy quotient of $X$ by $G$.
	\item The equivariant map $X \rightarrow \{pt\}$ induces a fibration $X \times_G E_G \rightarrow \{pt\} \times E_G/G = B_G$ with fiber $X$. In particular, we have Leray-Serre spectral sequences
	$$E_{p,q}^2 = H_p(B_G, H_q(X)) \Rightarrow H_{p+q}^G(X)$$
and  
	$$E^{p,q}_2 = H^p(B_G, H^q(X)) \Rightarrow H^{p+q}_G(X)$$
(see for instance \cite{McCleary} section 11.4).
	\item If $G = \{e\}$, $H^G_*(X) = H_*(X)$. If $X$ is contractible (e.g. if $X$ is a point), $H^G_*(X) = H_*^G(B_G) = H_*(G , \mathbb{Z}_2)$, since $G$ is finite, where $H_*(G , \mathbb{Z}_2)$ is the homology of the group $G$ with coefficients in $\mathbb{Z}_2$ (see for instance \cite{Brown}). If the action of $G$ on $X$ is trivial, $H_*^G(X) = H_*(X) \otimes_{\mathbb{Z}_2} H_*(G , \mathbb{Z}_2)$ (in this case, $X \times_G E_G = X \times B_G$ and we use the K\"unneth isomorphism). Finally, if the action of $G$ on $X$ is free, we have $H^G_*(X) = H_*(X/G)$ (in this case, the equivariant map $E_G \rightarrow \{pt\}$ induces a fibration $X \times_G E_G \rightarrow X/G$ with fiber $E_G$, which is contractible). We have similar statements for the cohomological counterparts.
	
	\item If $X$ is a $G$-$CW$-complex, i.e. if $X$ is a $CW$-complex and the action of $G$ permutes its cells, the equivariant singular homology and cohomology coincide respectively with the equivariant homology and cohomology defined in \cite{Brown} Chap. VII, sect. 7. Indeed, consider a contractible $G$-$CW$-complex $E$ such that $G$ freely permutes its cells (see for instance the construction of \cite{Hatcher}  Example 1B.7). We have an equivariant chain isomorphism
$$C_*^{cell}(E) \otimes_{\mathbb{Z}_2} C_*^{cell}(X) \rightarrow C_*^{cell}(E \times X),$$
where $C_*^{cell}(\, \cdot \,)$ denotes the cellular chain complex with coefficients in $\mathbb{Z}_2$, which induces a chain isomorphism
$$C_*^{cell}(E) \otimes_{G} C_*^{cell}(X) \rightarrow C_*^{cell}(E \times X)_{G} \rightarrow C_*^{cell}\left((E \times X)/G\right)$$  
(see \cite{Brown} Chap. III sect. 0 and Chap. II Proposition 2.4), which itself induces an isomorphism in equivariant homology (the cellular complex of the homotopy quotient $(E \times X)/G$ computes its homology, and $C^{cell}_*(E)$ is a free resolution of $\mathbb{Z}$ over $\mathbb{Z}[G]$ since $E$ is a free $G$-$CW$-complex : see \cite{Brown} Chap. I Proposition 4.1). 

As for the cohomological counterpart, apply the duality functor $\Hom_{\mathbb{Z}_2}( \, \cdot \, , \mathbb{Z}_2)$ to the above chain isomorphism and use the tensor-hom adjunction to obtain natural isomorphisms
$$\Hom_{\mathbb{Z}_2}\left(C_p^{cell}(E) \otimes_{G} C_q^{cell}(X) , \mathbb{Z}_2\right) \cong \Hom_G\left(C_p^{cell}(E), \Hom_{\mathbb{Z}_2}\left(C_q^{cell}(X), \mathbb{Z}_2\right)\right).$$ 

\end{enumerate}
\end{rem}

\subsection{A real algebraic model for the total space $E_G$}

In order to study the equivariant geometry of $G$-$\mathcal{AS}$-sets via these equivariant homology and cohomology, we choose for $E_G$ a convenient ``real algebraic'' model : for $n \in \mathbb{N} \setminus \{0\}$ and $k \in \mathbb{N} \cup \{\infty\}$, consider the Stiefel manifold $V_n(\mathbb{R}^k)$, which is the set of the orthonormal $n$-frames of $\mathbb{R}^k$, that is $n$-tuples of orthonormal vectors of $\mathbb{R}^k$ (recall that $\mathbb{R}^{\infty}$ denotes the space of sequences $(x_n)_{n \in \mathbb{N} \setminus \{0\}}$ such that $x_n \neq 0$ for finitely many $i$'s).

Remark that, if $k$ is finite, $V_n(\mathbb{R}^k)$ is a compact space, as closed subspace of the product of $n$ copies of the unit sphere in $\mathbb{R}^k$. Besides, the natural inclusions $V_n(\mathbb{R}^k) \hookrightarrow V_n(\mathbb{R}^{k+1}) \hookrightarrow V_n(\mathbb{R}^{\infty})$ fit into the equality
$$V_{n}(\mathbb{R}^{\infty}) = \varinjlim_{k \in \mathbb{N}} V_{n}(\mathbb{R}^k)$$ 
(it is an infinite increasing union in $V_{n}(\mathbb{R}^{\infty})$).

Furthermore, $V_{n}(\mathbb{R}^{\infty})$ is a contractible space (see for instance Example 4.53 of \cite{Hatcher}) and one can equip $V_{n}(\mathbb{R}^{\infty})$, as well as any $V_n(\mathbb{R}^k)$, with a free action of $G$ : 

\begin{lem} Let $n \in \mathbb{N} \setminus \{0\}$ and $k \in \mathbb{N} \cup \{\infty\}$, and let $H$ be a subgroup of the orthogonal group $O_n(\mathbb{R})$. There is a free action of $H$ on $V_n(\mathbb{R}^k)$.
\end{lem}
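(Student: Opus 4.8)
The plan is to exploit the natural left action of $O_n(\mathbb{R})$ on the right of an $n$-frame. Recall that an element of $V_n(\mathbb{R}^k)$ can be viewed as a $k \times n$ matrix $M$ whose columns are orthonormal, i.e. $M^t M = I_n$. The orthogonal group $O_n(\mathbb{R})$ acts on the right by $M \mapsto M A^{-1}$ for $A \in O_n(\mathbb{R})$: since $(M A^{-1})^t (M A^{-1}) = A^{-t} M^t M A^{-1} = A^{-t} A^{-1} = (A A^t)^{-1} = I_n$, this is again an orthonormal $n$-frame, so the action is well-defined; associativity and the identity axiom are immediate, so this is indeed a group action. Restricting to the subgroup $H \leq O_n(\mathbb{R})$ gives an action of $H$ on $V_n(\mathbb{R}^k)$, and for $k = \infty$ the same formulas apply since the action only ever involves the (finitely many nonzero) coordinates.

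The one thing that genuinely needs an argument is freeness. Suppose $M A^{-1} = M$ for some $M \in V_n(\mathbb{R}^k)$ and $A \in H$. Since the columns of $M$ are orthonormal they are linearly independent, so $M$ has rank $n$ and hence $M^t M = I_n$ is invertible; multiplying $M A^{-1} = M$ on the left by $M^t$ yields $A^{-1} = I_n$, so $A = e$. Thus the stabilizer of every point is trivial, which is exactly freeness. (Equivalently: $M$ being injective as a linear map $\mathbb{R}^n \to \mathbb{R}^k$, the relation $M A^{-1} = M$ forces $A^{-1}$ to be the identity.)

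This is the whole proof; I expect no real obstacle. The only point to be careful about is the bookkeeping for $k = \infty$ — one should remark that $V_n(\mathbb{R}^\infty) = \varinjlim_k V_n(\mathbb{R}^k)$ and that the $H$-action is compatible with the inclusions $V_n(\mathbb{R}^k) \hookrightarrow V_n(\mathbb{R}^{k+1})$ (it is, since it acts only by column operations on the frame, not affecting the ambient $\mathbb{R}^k$), so the action passes to the direct limit and remains free there because freeness is a pointwise condition and every point of $V_n(\mathbb{R}^\infty)$ lies in some $V_n(\mathbb{R}^k)$. Finally, to apply this to the group $G$ of the paper one simply notes that $|G| = N$ and the regular representation embeds $G$ into $O_N(\mathbb{R})$ as permutation matrices, so taking $n = N = |G|$ and $H$ the image of $G$ produces the desired free $G$-action on $V_{|G|}(\mathbb{R}^k)$ for every $k \in \mathbb{N} \cup \{\infty\}$.
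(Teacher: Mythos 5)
Your proof is correct and is essentially the paper's own argument: identify a frame with a matrix whose columns are orthonormal, act by right multiplication by the inverse of the group element, and deduce freeness from the fact that such a matrix has full rank (the paper even leaves the rank argument implicit, which you spell out). Your extra remarks on the $k=\infty$ case and on embedding $G$ into $O_{|G|}(\mathbb{R})$ via permutation matrices match what the paper records in the surrounding remark.
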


\begin{proof} Identify $V_n(\mathbb{R}^k)$ with the set of matrices $A \in M_{k,n}(\mathbb{R})$ such that ${}^t\!A A = I_n$ and, if $M \in H$ and $A \in V_n(\mathbb{R}^k)$, set $M \cdot A := A M^{-1} \in V_n(\mathbb{R}^k)$ (we have ${}^t\!(A M^{-1}) A M^{-1} = {}^t\!(M^{-1}) {}^t\!A A M^{-1} = I_n$).  

This action of $H$ on $V_n(\mathbb{R}^k)$ is free since, if $M \in H$, $A \in V_n(\mathbb{R}^k)$, $A M = A \Rightarrow M = I_n$.
\end{proof}

\begin{rem} \label{remactionstiefel}
\begin{itemize} 
	\item We can describe the action of $H$ on $V_n(\mathbb{R}^k)$ as a restriction of a linear action of $H$ on $\left(\mathbb{R}^n\right)^k$. Indeed, if $A = (v_1, \ldots, v_n) \in V_n(\mathbb{R}^k)$, write $v_i = (a_{1,i}, \ldots, a_{k,i})$ and associate to $(v_1, \ldots, v_n)$ the single vector 
$$v := (a_{1,1}, \ldots, a_{1,n}, \ldots, a_{k,1}, \ldots, a_{k,n}).$$
Now, let $M$ be an element of $H$ and $M_0$ be the block diagonal matrix with $k$ copies of $M$ as diagonal blocks. Then, via this correspondence, the action of $M$ on $A$ corresponds to the (left) application of the matrix $M_0$ to the vector $v \in \left(\mathbb{R}^n\right)^k$ (recall that $M^{-1} = {}^t\!M$).

	\item If $|G| = N$, we can embed the finite group $G$ into the orthogonal group $O_N(\mathbb{R})$ via permutation matrices.
\end{itemize}
\end{rem} 

As a consequence, the quotient map $V_{n}(\mathbb{R}^{\infty}) \rightarrow V_{n}(\mathbb{R}^{\infty})/G$ is a universal principal $G$-bundle. Moreover :

\begin{lem}
For any $n \in \mathbb{N} \setminus \{0\}$ and $k \in \mathbb{N}$, the Stiefel manifold $V_n(\mathbb{R}^k)$ is a compact nonsingular algebraic subset of $\mathbb{R}^{nk}$.
\end{lem}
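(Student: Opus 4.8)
The plan is to exhibit $V_n(\mathbb{R}^k)$ explicitly as the zero set of a family of real polynomials in the $nk$ coordinates, and then check that this algebraic set is nonsingular of the expected dimension. First I would use the matrix description from the previous lemma: identify $V_n(\mathbb{R}^k)$ with the set of matrices $A \in M_{k,n}(\mathbb{R})$ satisfying ${}^t\!A A = I_n$. Writing $A = (a_{i,j})_{1 \leq i \leq k, \, 1 \leq j \leq n}$, the entries of the symmetric matrix ${}^t\!A A - I_n$ are polynomials in the $a_{i,j}$, so $V_n(\mathbb{R}^k)$ is cut out in $\mathbb{R}^{nk}$ by the $\binom{n+1}{2}$ equations coming from the diagonal and upper-triangular part of ${}^t\!A A = I_n$. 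Compactness is immediate (as already noted in the excerpt, $V_n(\mathbb{R}^k)$ sits inside a product of $n$ unit spheres, hence is closed and bounded), so the real point is nonsingularity.

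For nonsingularity, I would argue that $V_n(\mathbb{R}^k)$ is a smooth submanifold of $\mathbb{R}^{nk}$ by the regular value theorem, and that its scheme-theoretic structure as the above algebraic set is reduced with the local rings regular. Concretely, consider the polynomial map $F : M_{k,n}(\mathbb{R}) \to \mathrm{Sym}_n(\mathbb{R}) \cong \mathbb{R}^{\binom{n+1}{2}}$ given by $F(A) = {}^t\!A A - I_n$; a standard computation shows that at any $A$ with $F(A) = 0$ the differential $dF_A(H) = {}^t\!H A + {}^t\!A H$ is surjective onto $\mathrm{Sym}_n(\mathbb{R})$ (given a symmetric $S$, take $H = \tfrac12 A S$, using ${}^t\!A A = I_n$). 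Hence $0$ is a regular value, $V_n(\mathbb{R}^k)$ is a smooth manifold of dimension $nk - \binom{n+1}{2}$, and the Jacobian of the defining equations has maximal rank $\binom{n+1}{2}$ at every point. This last fact, together with the dimension count showing that $V_n(\mathbb{R}^k)$ has dimension exactly $nk - \binom{n+1}{2}$ (which follows since it is a nonempty smooth manifold of that dimension, assuming $k \geq n$ so it is nonempty), lets me invoke the Jacobian criterion for real algebraic sets: at each point the local ring is regular, so $V_n(\mathbb{R}^k)$ is a nonsingular real algebraic set. I would cite \cite{BCR} (e.g. Proposition 3.3.9 or the discussion of nonsingular points via the Jacobian) for the precise statement that maximal rank of the Jacobian at a point implies that point is nonsingular, noting that one must also know the ideal generated by the entries of ${}^t\!A A - I_n$ is radical, or equivalently that the variety is reduced — which again follows from the regular value / constant rank considerations, since a smooth hypersurface-type intersection defined by equations with everywhere-maximal Jacobian rank is automatically reduced.

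The main obstacle I expect is precisely this bookkeeping between the differential-geometric statement (smooth submanifold of the right dimension) and the algebro-geometric statement (nonsingular as a real algebraic set), since a priori the ideal generated by the obvious quadratic equations could fail to be the full ideal $I(V_n(\mathbb{R}^k))$ or could be non-radical. I would handle this by emphasizing that the Jacobian of the chosen generators already has rank $\binom{n+1}{2} = nk - \dim V_n(\mathbb{R}^k)$ everywhere on the set; by the Jacobian criterion this forces every point to be a nonsingular point of the real algebraic set $V(\langle \text{entries of } {}^t\!A A - I_n\rangle)$, and a real algebraic set all of whose points are nonsingular of a fixed dimension is a nonsingular real algebraic set in the sense used throughout the paper. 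The embedding into $\mathbb{R}^{nk}$ is the tautological one given by the $nk$ matrix entries, matching the statement. The remaining verifications — that $F(A) = 0$ exactly describes orthonormal $n$-frames, and the surjectivity of $dF_A$ — are short and routine, so I would state them without grinding through the indices.
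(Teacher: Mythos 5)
Your proposal is correct and takes essentially the same route as the paper: both present $V_n(\mathbb{R}^k)$ as the zero set in $\mathbb{R}^{nk}$ of the $\binom{n+1}{2}$ quadratic orthonormality equations, obtain compactness from the inclusion in a product of unit spheres, and deduce nonsingularity (and the dimension $nk-\binom{n+1}{2}$) from the fact that the Jacobian of these equations has maximal rank at every point of the set. The only cosmetic difference is that you check the rank condition by the surjectivity computation $dF_A\bigl(\tfrac12 AS\bigr)=S$ while the paper verifies directly that the columns of the Jacobian matrix are linearly independent; also, your worry about radicality of the generated ideal is unnecessary, since the Jacobian criterion of \cite{BCR} only requires equations lying in $I\left(V_n(\mathbb{R}^k)\right)$ together with the dimension count you already have.
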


\begin{proof} The set $V_n(\mathbb{R}^k) \subset \left(\mathbb{R}^{k}\right)^n$ is described by the real algebraic equations
$$P_i := a_{1,i}^2 + \ldots + a_{k,i}^2 - 1 = 0,~~1\leq i \leq n$$
and
$$Q_{i,j} := a_{1,i} a_{1,j} + \ldots + a_{k,i} a_{k,j} = 0,~~1 \leq i < j \leq n.$$

Now, the columns of the matrix 
$$
\setcounter{MaxMatrixCols}{20}
\begin{pmatrix}
2 a_{1,1} & & & & a_{1,2} & a_{1,3} & \cdots & a_{1,n} & & &&&&&\\
\vdots & & & & \vdots & \vdots & & \vdots & & &&&&&\\
2 a_{k,1} & & & & a_{k,2} & a_{k,3} & \cdots & a_{k,n} & &&&&&&\\
& 2 a_{1,2} & & & a_{1,1} & & & & a_{1,3} & \cdots & a_{1,n}&&&&\\
& \vdots &&  & \vdots & & && \vdots &&\vdots &&&&\\
& 2 a_{k,2} & & & a_{k,1} & & & & a_{k,3} & \cdots &a_{k,n}&&&&\\
& & \ddots & & &\ddots  & & & &&&&\ddots&&\\
& & & 2 a_{1,n} & && & a_{1,1} & && a_{1,2}&& &&a_{1,n-1}\\
& & & \vdots & & & & \vdots & &\ddots & \vdots &&&&\vdots\\
& & & 2 a_{k,n} & &&& a_{k,1} & && a_{k,2} && &&a_{k, n-1}
\end{pmatrix}$$
of the partial derivatives of the polynomials $P_i$'s and $Q_{i,j}$'s, at any point $(a_{i,j})_{i,j}$ of $V_n(\mathbb{R}^k)$, are linearly independent (otherwise we would have a nontrivial linear relation between the orthonormal vectors $(a_{1,j}, \ldots, a_{n,j})$, $1\leq j \leq n$, which is impossible), so that the real algebraic set $V_n(\mathbb{R}^k)$ is nonsingular of dimension $nk - \frac{n(n+1)}{2}$. 
\end{proof}

Denote $N := |G|$ and suppose that $X$ is a $G$-$\mathcal{AS}$-set. We are going to realize the equivariant singular homology 
$$H_*^G(X) = H_*\left(X \times_G V_{N}(\mathbb{R}^{\infty})\right)$$
of $X$ as an inductive limit of the singular homologies of the geometric quotients of $X \times V_N(\mathbb{R}^k)$ by $G$.
\\

First, recall (section \ref{sectionquotientAS}) that we can assume $X$ to be an $\mathcal{AS}$-subset of $\mathbb{R}^d$ with compact Zariski closure (in $\mathbb{R}^d$) such that $X$ (and then $\overline{X}^{\mathcal{Z}}$) is globally stabilized under a linear action of $G$ on $\mathbb{R}^d$. 

Now, let $k \in \mathbb{N}$. The Stiefel manifold $V_N(\mathbb{R}^k)$ is acted by a linear action of $G$ on $\mathbb{R}^{N k}$ (remark \ref{remactionstiefel}), and $X \times V_N(\mathbb{R}^k)$ is an $\mathcal{AS}$-subset of $\mathbb{R}^{d+N k}$ globallly stabilized under the induced diagonal linear action of $G$ on $\mathbb{R}^{d+N k}$. 

Denote $N_k := d+N k$ and let $p_1, \ldots, p_m$ be generators of the corresponding invariant algebra $\mathbb{R}[X_1, \ldots, X_{N_k}]^G$. The action of $G$ on $X \times V_N(\mathbb{R}^{k+1}) \subset \mathbb{R}^{N_{k+1}}$ is the diagonal action of $G$ on $\mathbb{R}^{N_k} \times \mathbb{R}^N$ (remark \ref{remactionstiefel}). We can then suppose the generators of the invariant algebra $\mathbb{R}[X_1, \ldots, X_{N_k}, X_{N_k + 1}, \ldots, X_{N_k+N}]^G$ to be the polynomials $p_1, \ldots, p_m$ together with polynomials $q_1, \ldots, q_{m'} \in \mathbb{R}[X_1, \ldots, X_{N_k}, X_{N_k + 1}, \ldots, X_{N_k+N}]$ such that $q_j(X_1, \ldots, X_{N_k}, 0, \ldots, 0) = 0$ for $j \in \{1, \ldots, m'\}$. 

If we denote by $i$ the natural embedding of $\mathbb{R}^{N_k}$ in $\mathbb{R}^{N_k + N}$, it induces by functoriality of the real geometric quotient (lemma \ref{indepemb}) the following commutative diagram
$$\begin{array}{lcl} 
\mathbb{R}^{N_k} & \stackrel{i}{\longrightarrow} & \mathbb{R}^{N_{k+1}}\\
 \downarrow {\scriptstyle \pi_k = (p_1, \ldots, p_m)} & &  \downarrow {\scriptstyle \pi_{k+1} = (p_1, \ldots, p_m, q_1, \ldots, q_{m'})}\\
Y \subset \mathbb{R}^m & \stackrel{\rho}{\longrightarrow} & Y' \subset \mathbb{R}^{m+m'}
\end{array}$$
of geometric quotients, where $\rho : \begin{array}{ccc} Y & \rightarrow & Y' \\ (y_1, \ldots, y_m) &\mapsto & (y_1, \ldots, y_m, 0, \ldots, 0) \end{array}$.
Notice that, if we denote by $y_1, \ldots, y_{m}, z_1, \ldots, z_{m'}$ the coordinates in $\mathbb{R}^{m+m'}$, $\rho(Y) = Y' \cap \{ z_1 = \ldots = z_{m'} = 0 \}$.

Finally, we restrict to the inclusion $i : X \times V_N(\mathbb{R}^{k}) \hookrightarrow X \times V_N(\mathbb{R}^{k+1})$ to obtain the commutative diagram
$$\begin{array}{ccc} 
X \times V_N(\mathbb{R}^{k}) & \stackrel{i}{\longrightarrow} & X \times V_N(\mathbb{R}^{k+1})\\
 ~ \downarrow {\scriptstyle \pi_k} & & ~ \downarrow {\scriptstyle \pi_{k+1}}\\
~~~~~\pi_k\left(X \times V_N(\mathbb{R}^{k})\right) \subset \mathbb{R}^m & \stackrel{\rho}{\longrightarrow} & ~~~~\pi_{k+1}\left(X \times V_N(\mathbb{R}^{k+1})\right) \subset \mathbb{R}^{m+m'}
\end{array}$$
In particular, $\rho\left(\pi_k\left(X \times V_N(\mathbb{R}^{k})\right)\right) = \pi_k\left(X \times V_N(\mathbb{R}^{k+1})\right) \cap \{ z_1 = \ldots = z_{m'} = 0 \}$. Remark that the $\mathcal{AS}$-closure of $X \times V_N(\mathbb{R}^k)$ is $\overline{X}^{\mathcal{AS}} \times V_N(\mathbb{R}^k)$ (use an induction of dimension together with   Proposition 3.3 of \cite{KP}), on which the diagonal action of $G$ is free. As a consequence, the geometric quotient $\pi_k\left(X \times V_N(\mathbb{R}^k)\right)$ of $X \times V_N(\mathbb{R}^k)$ by $G$ is an $\mathcal{AS}$-set of $\mathbb{R}^m$ (corollary \ref{quotientas}). 
\\

The maps $\pi_k : X \times V_N(\mathbb{R}^{k}) \rightarrow \pi_{k}\left(X \times V_N(\mathbb{R}^{k})\right)$ form an inductive system which induces a map 
$$\Pi : X \times V_N(\mathbb{R}^{\infty})  = \varinjlim_{k \in \mathbb{N}} X \times V_N(\mathbb{R}^{k}) \rightarrow \varinjlim_{k \in \mathbb{N}} \pi_{k}\left(X \times V_N(\mathbb{R}^{k})\right).$$

Since each map $\pi_k$ is continuous, closed, open and verifies, for $x,y \in V_N(\mathbb{R}^{k})$, $\pi_k(x) = \pi_k(y)$ if and only there exists $g \in G$ such that $x = g \cdot y$, the map $\Pi$ is continuous, closed, open and verifies for $x,y \in V_N(\mathbb{R}^{\infty})$, $\Pi(x) = \Pi(y)$ if and only there exists $g \in G$ such that $x = g \cdot y$ as well.

In particular, the surjective map
$$\Pi : X \times V_N(\mathbb{R}^{\infty}) \rightarrow \varinjlim_{k \in \mathbb{N}} \pi_{k}\left(X \times V_N(\mathbb{R}^{k})\right)$$
is (up to homeomorphism) the topological quotient map $X \times V_N(\mathbb{R}^{\infty}) \rightarrow \left(X \times V_N(\mathbb{R}^{\infty})\right)/G$. As a consequence,
$$H_*^G(X) = H_*\left( \varinjlim_{k \in \mathbb{N}} \pi_{k}\left(X \times V_N(\mathbb{R}^{k})\right)\right).$$

We then use Proposition 3.33 of \cite{Hatcher} to establish the following statement :

\begin{prop} \label{equivsinghomindlim1} Denote $X_{(k)} := \pi_{k}\left(X \times V_N(\mathbb{R}^{k})\right)$. The inductive system of inclusions $X_{(k)} \rightarrow X_{(k+1)}$ induces an isomorphism
$$\varinjlim_{k \in \mathbb{N}} H_*(X_{(k)}) \rightarrow H_*^G(X).$$
\end{prop}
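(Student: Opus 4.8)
The plan is to deduce the statement directly from the standard fact that singular homology commutes with the appropriate kind of direct limit of spaces. Concretely, by the discussion immediately preceding the proposition we already know that $\Pi : X \times V_N(\mathbb{R}^\infty) \to \varinjlim_k X_{(k)}$ is (up to homeomorphism) the topological quotient map onto $\bigl(X \times V_N(\mathbb{R}^\infty)\bigr)/G$, and since the action of $G$ on the contractible space $V_N(\mathbb{R}^\infty)$ is free this quotient is the homotopy quotient $X \times_G V_N(\mathbb{R}^\infty)$, whose homology is by definition $H_*^G(X)$. So it suffices to prove that the natural map $\varinjlim_k H_*(X_{(k)}) \to H_*\bigl(\varinjlim_k X_{(k)}\bigr)$ is an isomorphism.

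First I would check that the hypotheses of Proposition 3.33 of \cite{Hatcher} are met. That proposition asserts that if a space $Z$ is the increasing union of a sequence of subspaces $Z_k$ with the property that each compact subset of $Z$ is contained in some $Z_k$ (which holds, in particular, when the $Z_k$ form a \emph{CW-filtration}, i.e. each $Z_k$ is a subcomplex of $Z_{k+1}$ in a CW structure, or more generally when the inclusions are closed cofibrations and $Z$ has the weak topology), then $H_*(Z) = \varinjlim_k H_*(Z_k)$. Here $Z = \varinjlim_k X_{(k)}$ with the direct limit (weak) topology and $Z_k = X_{(k)}$; the inclusions $X_{(k)} \hookrightarrow X_{(k+1)}$ are closed inclusions of $\mathcal{AS}$-sets, as was established just above via the explicit description $\rho\bigl(X_{(k)}\bigr) = X_{(k+1)} \cap \{z_1 = \cdots = z_{m'} = 0\}$. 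The key point to verify is the compact-support condition: any compact $K \subset Z$ lies in some $X_{(k)}$. This follows because the analogous statement holds upstairs — $V_N(\mathbb{R}^\infty) = \varinjlim_k V_N(\mathbb{R}^k)$ has the weak topology and every compact subset of it lies in some $V_N(\mathbb{R}^k)$ (a standard fact, cf.\ the same reference), hence every compact subset of $X \times V_N(\mathbb{R}^\infty)$ lies in some $X \times V_N(\mathbb{R}^k)$ — and then its image under the continuous (indeed closed) map $\Pi$ is a compact subset of $Z$ contained in $X_{(k)}$; conversely, since $\Pi$ is surjective and each $X \times V_N(\mathbb{R}^k)$ is compact (as $X$ has compact Zariski closure and $V_N(\mathbb{R}^k)$ is compact), the sets $X_{(k)}$ are compact and exhaust $Z$, and one checks $Z$ carries the direct limit topology because $\Pi$ is a quotient map and $X \times V_N(\mathbb{R}^\infty)$ carries the direct limit topology.

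Granting the compact-support/weak-topology condition, Proposition 3.33 of \cite{Hatcher} gives that the inclusions induce an isomorphism $\varinjlim_k H_*(X_{(k)}) \xrightarrow{\ \sim\ } H_*\bigl(\varinjlim_k X_{(k)}\bigr) = H_*^G(X)$, which is exactly the claim. I expect the only genuinely nontrivial point to be the topological bookkeeping in the previous paragraph — namely confirming that $\varinjlim_k X_{(k)}$ really carries the weak topology with respect to the filtration by the $X_{(k)}$ and that compact subsets are captured at finite level — since once that is in place the homological conclusion is a black-box citation. Everything else (the identification of $\varinjlim_k X_{(k)}$ with the homotopy quotient, and of its homology with $H_*^G(X)$) has already been carried out in the text preceding the statement.
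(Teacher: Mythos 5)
Your overall strategy is the same as the paper's: after the identification $H_*^G(X)=H_*\bigl(\varinjlim_k X_{(k)}\bigr)$ already carried out in the text, everything reduces to Proposition 3.33 of Hatcher, whose only hypothesis to check is that every compact subset of $\varinjlim_k X_{(k)}$ is contained in some $X_{(k)}$. It is precisely at this step that your argument has a gap. What you actually prove is the reverse implication: a compact subset of $X\times V_N(\mathbb{R}^{\infty})$ lies in some $X\times V_N(\mathbb{R}^{k})$, hence its image under $\Pi$ lies in $X_{(k)}$. This does not say anything about an arbitrary compact $K\subset \varinjlim_k X_{(k)}$ unless you first exhibit $K$ as the image of a compact set upstairs. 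The ``conversely'' sentence meant to patch this is based on a false claim: $X\times V_N(\mathbb{R}^{k})$, and hence $X_{(k)}$, need not be compact, because a $G$-$\mathcal{AS}$-set $X$ has compact Zariski closure but is in general not closed, let alone compact (it is only a Boolean combination of compact arc-symmetric sets). So the exhaustion-by-compacts/weak-topology argument you sketch does not go through as written.

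The gap is repairable within your approach: the map $\Pi$ is closed (this is established in the text just before the proposition) and has finite fibers (the $G$-orbits), hence it is proper; therefore $\Pi^{-1}(K)$ is compact, lies in some $X\times V_N(\mathbb{R}^{k})$ by the compact-capture property of $V_N(\mathbb{R}^{\infty})=\varinjlim_k V_N(\mathbb{R}^{k})$, and then $K=\Pi\bigl(\Pi^{-1}(K)\bigr)\subset X_{(k)}$. The paper avoids $\Pi$ altogether and works directly downstairs: it regards $\varinjlim_k X_{(k)}$ as an increasing union inside $\mathbb{R}^{\infty}$, endows $\mathbb{R}^{\infty}$ with the cubical $CW$-structure in which each $\mathbb{R}^{l}$ is a subcomplex, and invokes the fact that a compact subset of a $CW$-complex meets only a finite subcomplex (Hatcher, Proposition A.1), so that $K\subset\mathbb{R}^{m_{k_0}}$ and hence $K\subset \varinjlim_k X_{(k)}\cap\mathbb{R}^{m_{k_0}}=X_{(k_0)}$; Proposition 3.33 then concludes. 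Either repair is fine, but as submitted the key hypothesis is not verified.
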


\begin{proof} The inductive limit $\varinjlim_{k \in \mathbb{N}} X_{(k)}$ can be considered as an increasing union in $\mathbb{R}^{\infty}$. Let $K$ be a compact of $\varinjlim_{k \in \mathbb{N}} X_{(k)}$. Then $K$ is a compact of $\mathbb{R}^{\infty}$. 

Now, if $l \in \mathbb{N}$, consider the $CW$-complex structure on $\mathbb{R}^l$ whose $0$-dimensional cells are the points of $\mathbb{Z}^l$ and the higher dimensional open cells are the open hypercubes of edge-length one and vertices in $\mathbb{Z}^l$. These $CW$-complex structures on $\mathbb{R}^l$, $l \in \mathbb{N}$, are compatible with the natural inclusions $\mathbb{R}^l \rightarrow \mathbb{R}^{l+1}$, so that $\mathbb{R}^l$ is a subcomplex of $\mathbb{R}^{l+1}$. Moreover, via these inclusions, they induce a $CW$-complex structure on $\mathbb{R}^{\infty}$, such that each $\mathbb{R}^l$ is a subcomplex of $\mathbb{R}^{\infty}$.

By, for instance, Proposition A.1 of \cite{Hatcher}, the compact set $K$ is then included in a finite subcomplex of $\mathbb{R}^{\infty}$. Therefore, it is included in some $\mathbb{R}^l$ for $l \in \mathbb{N}$ and there exists $k_0 \in \mathbb{N}$ such that $K \subset \mathbb{R}^{m_{k_0}}$, where $\pi_{k_0} = (p_1, \ldots, p_{m_{k_0}})$. Finally,
$$K \subset \varinjlim_{k \in \mathbb{N}} X_{(k)} \cap \mathbb{R}^{m_{k_0}} = X_{(k_0)}$$
and we conclude by Proposition 3.33 of \cite{Hatcher}.

\end{proof}

Dually, we obtain :

\begin{cor} \label{equivsinghomindlim2} The inductive system of inclusions $X_{(k)} \rightarrow X_{(k+1)}$ induces a projective system in singular cohomology $H^*(X_{(k+1)}) \rightarrow H^*(X_{(k)})$ and an isomorphism
$$H^*_G(X) \rightarrow \varprojlim_{k \in \mathbb{N}} H^*(X_{(k)}).$$
\end{cor}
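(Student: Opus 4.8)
The plan is to dualize the argument of Proposition \ref{equivsinghomindlim1} using the standard relationship between singular cohomology and the homology of inductive systems of spaces. First I would observe that applying the singular cochain functor (with $\mathbb{Z}_2$ coefficients) to the inclusions $X_{(k)} \hookrightarrow X_{(k+1)}$ yields restriction maps $H^*(X_{(k+1)}) \to H^*(X_{(k)})$, so that $\{H^*(X_{(k)})\}_{k \in \mathbb{N}}$ is indeed a projective system, and the maps $H^*_G(X) = H^*\left(\varinjlim_k X_{(k)}\right) \to H^*(X_{(k)})$ induced by the inclusions $X_{(k)} \hookrightarrow \varinjlim_k X_{(k)}$ are compatible with it, hence define a canonical map $H^*_G(X) \to \varprojlim_k H^*(X_{(k)})$.

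To prove this map is an isomorphism, the key input is that the topological space $\varinjlim_k X_{(k)}$ carries the weak topology with respect to the subspaces $X_{(k)}$ (it is an increasing union realized inside $\mathbb{R}^{\infty}$, exactly as in the proof of Proposition \ref{equivsinghomindlim1}), so that a singular cochain, being determined by its values on the (compactly supported) singular simplices, is the same datum as a compatible family of cochains on the $X_{(k)}$. Concretely, I would invoke the fact that singular homology commutes with such increasing unions — this is precisely Proposition 3.33 of \cite{Hatcher}, already used above, giving $H_*\left(\varinjlim_k X_{(k)}\right) \cong \varinjlim_k H_*(X_{(k)})$ — and then pass to cohomology. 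One clean route: by the universal coefficient theorem over the field $\mathbb{Z}_2$, cohomology is the linear dual of homology, and the linear dual of a direct limit of $\mathbb{Z}_2$-vector spaces is the inverse limit of the duals, so
$$H^*_G(X) = \Hom_{\mathbb{Z}_2}\!\left(H_*^G(X), \mathbb{Z}_2\right) \cong \Hom_{\mathbb{Z}_2}\!\left(\varinjlim_k H_*(X_{(k)}), \mathbb{Z}_2\right) \cong \varprojlim_k \Hom_{\mathbb{Z}_2}\!\left(H_*(X_{(k)}), \mathbb{Z}_2\right) \cong \varprojlim_k H^*(X_{(k)}),$$
and one checks this composite agrees with the natural map. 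Alternatively, and perhaps more transparently, one argues at the cochain level: $C^*\left(\varinjlim_k X_{(k)}; \mathbb{Z}_2\right) = \varprojlim_k C^*(X_{(k)}; \mathbb{Z}_2)$ because every singular simplex of the union lands in some $X_{(k)}$, and since the transition maps $C^*(X_{(k+1)}) \to C^*(X_{(k)})$ are surjective (restriction of cochains along an inclusion of spaces is onto), the $\varprojlim^1$ term vanishes and cohomology commutes with the inverse limit of cochain complexes.

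The main obstacle I anticipate is the bookkeeping around which topology $\varinjlim_k X_{(k)}$ carries and making sure the relevant Mittag-Leffler / $\varprojlim^1$-vanishing condition genuinely holds — i.e. that the restriction maps on cochains (or the transition maps in the relevant tower) are surjective, or equivalently that no derived-inverse-limit correction appears. Working over the field $\mathbb{Z}_2$ is what makes this painless, since there every short exact sequence splits and duality is exact, so I would lean on that. I would also make sure to cite Proposition 3.33 of \cite{Hatcher} for the homological statement, reuse verbatim the argument from Proposition \ref{equivsinghomindlim1} that compact subsets of $\varinjlim_k X_{(k)} \subset \mathbb{R}^\infty$ are contained in some $X_{(k_0)}$, and then conclude by the universal coefficient theorem over $\mathbb{Z}_2$ together with the fact that $\Hom_{\mathbb{Z}_2}(-,\mathbb{Z}_2)$ turns direct limits into inverse limits. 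This keeps the proof short, essentially a one-paragraph dualization of the previous proposition.
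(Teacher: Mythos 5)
Your proposal is correct and follows essentially the same route as the paper: the paper's proof is precisely the dualization $H^*_G(X) \cong \Hom_{\mathbb{Z}_2}(H_*^G(X),\mathbb{Z}_2) \cong \Hom_{\mathbb{Z}_2}(\varinjlim_k H_*(X_{(k)}),\mathbb{Z}_2) = \varprojlim_k \Hom_{\mathbb{Z}_2}(H_*(X_{(k)}),\mathbb{Z}_2) \cong \varprojlim_k H^*(X_{(k)})$, using Proposition \ref{equivsinghomindlim1} and universal coefficients over the field $\mathbb{Z}_2$ (citing Proposition 3F.5 of \cite{Hatcher}). Your additional cochain-level argument with $\varprojlim^1$-vanishing is a fine alternative but not needed.
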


\begin{proof}
We have 
\begin{eqnarray*}
H^*_G(X) & \cong & \Hom_{\mathbb{Z}_2} \left(H_*^G(X), \mathbb{Z}_2\right)\\
		& \xrightarrow{\cong} & \Hom_{\mathbb{Z}_2} \left(\varinjlim_{k \in \mathbb{N}} H_*(X_{(k)}), \mathbb{Z}_2\right) \\
		& = & \varprojlim_{k \in \mathbb{N}} \Hom_{\mathbb{Z}_2} \left(H_*(X_{(k)}), \mathbb{Z}_2\right) \\
		& \cong & \varprojlim_{k \in \mathbb{N}} H^*(X_{(k)})
\end{eqnarray*}
(see also Proposition 3F.5 of \cite{Hatcher} and its proof).
\end{proof}

\subsection{Equivariant homology and cohomology with closed supports}

We now define another equivariant homology for the $G$-$\mathcal{AS}$-set $X$ using the semialgebraic chain complexes with closed supports and coefficients in $\mathbb{Z}_2$ (see \cite{MCP}, Appendix) of the $\mathcal{AS}$-sets $X_{(k)}$, $k \in \mathbb{N}$. Precisely, the natural inclusions $X_{(k)} \rightarrow X_{(k+1)}$ induce an inductive system of injective chain morphisms
$$C_*\left(X_{(k)}\right) \rightarrow C_*\left(X_{(k+1)}\right)$$
and we denote by $C_*(X ; G)$ its inductive limit.

As for the cohomological counterpart, the inclusions $X_{(k)} \rightarrow X_{(k+1)}$ induce a projective system of surjective cochain morphisms
$$C^*\left(X_{(k+1)}\right) \rightarrow C^*\left(X_{(k)}\right),$$
where $C^*$ denote the dual cochain complex of $C_*$ (see \cite{LP}, section 2.3), and we denote by $C^*(X ; G)$ its projective limit.

\begin{de} We define by
$$H_*(X ; G) := H_*(C_*(X ; G))$$
and 
$$H^*(X ; G) := H^*(C^*(X ; G))$$
the respective equivariant homology and cohomology of $X$ with closed supports.
\end{de}

\begin{rem} \begin{itemize} 
\item Because homology commutes with inductive limits (see for instance \cite{Spanier}  Chap. 4, Sec. 1, Theorem 7) and since the semialgebraic chain complex with closed supports computes Borel-Moore homology, we have
$$H_*(X ; G) =  \varinjlim_{k \in \mathbb{N}} H^{BM}_*\left(X_{(k)}\right).$$
\item We also have
$$H^*(X ; G) = \Hom_{\mathbb{Z}_2} \left(H_*(X ; G) , \mathbb{Z}_2\right) = \varprojlim_{k \in \mathbb{N}} H^*_c\left(X_{(k)}\right) :$$
the left $\Hom$ functor of an inductive limit is the projective limit of the $\Hom$ functors (see for instance the proof of Proposition 3F.5 of \cite{Hatcher}), and the dual semialgebraic chain complex computes the cohomology with compact supports (see \cite{LP}, section 2.3).
\item The equivariant homology with closed supports $H_*(X ; G)$ of $X$ is different from the one considered in \cite{Pri-EWF} and \cite{Pri-CPEWF}. When $X$ is compact, the equivariant cohomology with closed supports $H^*(X ; G)$ of $X$ coincides with the equivariant cohomology considered in \cite{Pri-CPEWF} : see remark \ref{coincideequivcohom} below.
\end{itemize}
\end{rem}

\begin{lem} \label{coincideequivhomsingclosed} If $X$ is compact, then $H_*(X ; G) = H^G_*(X)$ and $H^*(X ; G) = H_G^*(X)$.
\end{lem}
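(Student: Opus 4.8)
The plan is to reduce both identities to statements about the finite stages $X_{(k)}$ and then pass to limits, using that compactness makes the various homology theories (singular, Borel--Moore, cohomology with compact supports, ordinary cohomology) coincide on each $X_{(k)}$. First I would observe that when $X$ is compact, its $\mathcal{AS}$-closure equals $X$, so each product $X \times V_N(\mathbb{R}^k)$ is already compact; hence, by Proposition \ref{quotientarcsym}, each quotient $X_{(k)} = \pi_k\left(X \times V_N(\mathbb{R}^k)\right)$ is a \emph{compact} $\mathcal{AS}$-set. On a compact semialgebraic set the semialgebraic chain complex with closed supports coincides with the one with compact supports, so that $H^{BM}_*(X_{(k)}) = H_*(X_{(k)})$ (ordinary singular homology with $\mathbb{Z}_2$-coefficients), and dually $H^*_c(X_{(k)}) = H^*(X_{(k)})$.

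The homological identity then follows by taking the inductive limit over $k$: using the first bullet of the remark following the definition, $H_*(X ; G) = \varinjlim_k H^{BM}_*(X_{(k)}) = \varinjlim_k H_*(X_{(k)})$, and by Proposition \ref{equivsinghomindlim1} this last inductive limit is precisely $H^G_*(X)$. One should check that the inductive-system maps agree on both sides — that is, that the map $H^{BM}_*(X_{(k)}) \to H^{BM}_*(X_{(k+1)})$ induced by the closed inclusion $X_{(k)} \hookrightarrow X_{(k+1)}$ becomes, under the compactness identifications, the map $H_*(X_{(k)}) \to H_*(X_{(k+1)})$ induced by inclusion; this is routine since both are functorially induced by the same closed embedding, and for closed embeddings of compact spaces the closed-support and ordinary chain-level pushforwards coincide. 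For the cohomological identity, I would simply dualize: $H^*(X ; G) = \Hom_{\mathbb{Z}_2}(H_*(X ; G), \mathbb{Z}_2) = \Hom_{\mathbb{Z}_2}(H^G_*(X), \mathbb{Z}_2) = H^*_G(X)$, invoking the $\Hom$--inductive-limit description already used in Corollary \ref{equivsinghomindlim2}; alternatively, take the projective limit of $H^*_c(X_{(k)}) = H^*(X_{(k)})$ and quote that corollary directly.

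The main obstacle I anticipate is \emph{not} the limit formalism but the bookkeeping needed to justify that the semialgebraic closed-support homology of a compact $\mathcal{AS}$-set equals its singular homology with $\mathbb{Z}_2$-coefficients, and that this identification is natural with respect to the inclusions $X_{(k)} \hookrightarrow X_{(k+1)}$. One must be careful that $X_{(k+1)}$ is not compact-in-the-large sense an obstruction — here it \emph{is} compact, which is exactly why the argument goes through — and that the inclusion is a closed embedding, so that the pushforward on closed-support chains is well-defined and compatible with the singular pushforward. Once naturality is in hand, everything is a formal consequence of Proposition \ref{equivsinghomindlim1} and Corollary \ref{equivsinghomindlim2}; so I expect the proof to be short, with the only real content being the sentence ``$X_{(k)}$ is compact, hence Borel--Moore homology equals singular homology there.''
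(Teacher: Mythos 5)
Your proposal is correct and follows essentially the same route as the paper: compactness of each quotient $X_{(k)}$, the identification $H^{BM}_*\left(X_{(k)}\right) \cong H_*\left(X_{(k)}\right)$ naturally with respect to the closed inclusions $X_{(k)} \hookrightarrow X_{(k+1)}$, passage to the inductive limit via Proposition \ref{equivsinghomindlim1}, and dualization by $\Hom_{\mathbb{Z}_2}(\, \cdot \,, \mathbb{Z}_2)$ for the cohomological statement. The only cosmetic difference is that the paper gets compactness of $X_{(k)}$ directly as the continuous image of the compact set $X \times V_N(\mathbb{R}^k)$, rather than through Proposition \ref{quotientarcsym}.
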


\begin{proof} If $X$ is compact, so is each quotient set $X_{(k)}$ (as the image of a compact set by a continuous map), and we have isomorphisms $H_*^{BM}\left(X_{(k)}\right) \rightarrow H_*\left(X_{(k)}\right)$, such that the diagrams
$$\begin{array}{ccc} 
H_*^{BM}\left(X_{(k)}\right) & \stackrel{}{\longrightarrow} & H_*^{BM}\left(X_{(k+1)}\right) \\
 ~ \downarrow {\scriptstyle \cong} & & ~ \downarrow {\scriptstyle \cong}\\
H_*\left(X_{(k)}\right) & \stackrel{}{\longrightarrow} & H_*\left(X_{(k+1)}\right)
\end{array}$$ 
are commutative. As a consequence, the inductive systems $H_*^{BM}\left(X_{(k)}\right)$, $k \in \mathbb{N}$ and $H_*\left(X_{(k)}\right)$, $k \in \mathbb{N}$ are isomorphic and the induced direct limits are isomorphic.

Application of the duality functor $\Hom_{\mathbb{Z}_2}(\, \cdot \, , \mathbb{Z}_2)$ provides the isomorphism $H^*(X ; G) \cong H_G^*(X)$.  
\end{proof}

\begin{rem} \label{coincideequivcohom} By \cite{ParkSuh}, $X$ has a (unique) semialgebraic $G$-$CW$-structure so that, if $X$ is compact, $H_*(X ; G)$ coincides with the equivariant cohomology of $\cite{Brown}$ Chap. VII, sect. 7 (see remark \ref{remequivsinghom} (6)), that is the homology $H_*(G , C_*^{cell}(X))$ of the group $G$ with coefficients in the chain complex $C_*^{cell}(X)$. 

Furthermore, since $G$ is a finite group, $X$ admits a (unique) $G$-equivariant semialgebraic triangulation (which induces its semialgebraic $G$-$CW$-structure) and we can use it to relate the chain complexes $C_*(X)$ and $C_*^{cell}(X)$ via an equivariant quasi-isomorphism. Consequently, if $X$ is compact, $H_*(X ; G) = H_*(G, C_*(X))$ (\cite{Brown} Chap. VII, Proposition 5.2), and by dualization, $H^*(X ; G) = H^*(G, C^*(X))$ (this was the definition of the equivariant cohomology considered in \cite{Pri-CPEWF}, Definition 3.3). 
\end{rem}

\begin{ex} We compute the equivariant homology of the real $2$-dimensional unit sphere $X := \mathbb{S}^{2}$ in $\mathbb{R}^{3}$ equipped with two different kind actions of $G := \mathbb{Z}/2\mathbb{Z}$, using the spectral sequence  
$$E_{p,q}^2 = H_p(G, H_q(X)) \Rightarrow H_{p+q}(X ; G),$$
induced by the double complex $F_* \otimes_G C_*(X)$, if $F_*$ is a projective resolution of $\mathbb{Z}_2$ over $\mathbb{Z}_2[G]$ (see \cite{Brown} Chap. VII, see also \cite{Pri-EWF} section 3).

Another equivariant homology of the sphere was computed in \cite{GF} (Example 2.8) and \cite{Pri-EWF} (Example 3.13), and the equivariant cohomology of the circle was computed in \cite{Pri-CPEWF} (Example 3.5).

\begin{enumerate} \item Consider the action of $G$ on $X$ given by the central symmetry $\sigma : (x_1, x_2, x_{3}) \mapsto (- x_1, -x_2, - x_{3})$. The projective resolution of $\mathbb{Z}_2$ over $\mathbb{Z}_2[G]$ we consider is 
$$\mathbb{Z}_2 \leftarrow \mathbb{Z}_2[G] \xleftarrow{1 + \sigma} \mathbb{Z}_2[G] \xleftarrow{1 + \sigma} \mathbb{Z}_2[G] \leftarrow \cdots$$
so that the above spectral sequence is induced by the double complex 
$$C_*(X) \xleftarrow{id_* + \sigma_*} C_*(X) \xleftarrow{id_* + \sigma_*} C_*(X) \leftarrow \cdots$$

We have $E^3 = E^2$ and in order to compute $E^4$, we have to compute the image of a point $p$ by the differential $d^3$. 

Apply $id_* + \sigma_*$ to $p$ to obtain the union of two opposite points : they are the boundary of a half-equator. If we apply $id_* + \sigma_*$ to this half-equator, we obtain an entire equator which is the boundary of an hemisphere. Finally, the sum of this hemisphere with its image by~$\sigma$ is the entire sphere, so that the page $E^4$ of the spectral sequence is

$$\begin{array}{ccccc}
0 & 0 & 0 & 0 & \cdots \\
0 & 0 & 0 & 0 & \cdots \\
\mathbb{Z}_2[p] & \mathbb{Z}_2[p] & \mathbb{Z}_2[p] & 0 & \cdots 
\end{array}$$

As a consequence,

$$H_k(X ; G) = \begin{cases} \mathbb{Z}_2 & \mbox{ if $k = 0, 1, 2,$} \\
					     0 & \mbox{ otherwise.}
			\end{cases}$$
Notice that, since the action on $X$ is free, we could have used the equality $H_*(X ; G) = H_*(X/G) = H_*(\mathbb{P}_2\left(\mathbb{R})\right)$ (remark \ref{remequivsinghom} (5)).

\item If we consider any non-free action of $G$ on $X$, we can represent the $0$-homology of $X$ by a fixed point. The image of this fixed point by $id_*+ \sigma_*$ is $0$, so $E^2 = E^{\infty}$ and
 
$$H_k(X ; G) = \begin{cases} \mathbb{Z}_2 & \mbox{ if $k = 0, 1,$} \\
					     \mathbb{Z}_2 \oplus \mathbb{Z}_2 & \mbox{ if $k \geq 2$.}
			\end{cases}$$
\item More generally, the equivariant homology of the real $d$-dimensional unit sphere $\mathbb{S}^d$ of $\mathbb{R}^{d+1}$ by an action of $G$ (via a biregular involution) is
$$H_k(\mathbb{S}^d ; G) = \begin{cases} \mathbb{Z}_2 & \mbox{ if $0 \leq k \leq d$,} \\
					     0 & \mbox{ otherwise,}
			\end{cases}$$
if the action is free, and
$$H_k(X ; G) = \begin{cases} \mathbb{Z}_2 & \mbox{ if $0 \leq k \leq d-1$,} \\
					     \mathbb{Z}_2 \oplus \mathbb{Z}_2 & \mbox{ if $k \geq d$,}
			\end{cases}$$	
if there is at least one fixed point.
\end{enumerate}

\end{ex}

\section{The equivariant Nash constructible filtrations} \label{sectequivnashconsfil}

In this section, we construct invariants for $G$-$\mathcal{AS}$-sets with respect to equivariant homeomorphisms with $\mathcal{AS}$-graph.

Precisely, for any $G$-$\mathcal{AS}$-set, we begin by constructing a filtration $\mathcal{N}_{\bullet}$ on the equivariant chain complex $C_*(X; G)$ using the Nash constructible filtration of $\cite{MCP}$. This filtered complex $\mathcal{N}_{\bullet} C_*(X ; G)$ is invariant with respect to equivariant homeomorphisms with $\mathcal{AS}$-graph, as well as the induced spectral sequence $E(X; G)$. From this spectral sequence $E(X; G)$, we recover invariants with values in $\mathbb{Z}$ which are additive with respect to equivariant inclusions of $G$-$\mathcal{AS}$-sets, and coincide with equivariant homology on compact nonsingular $G$-$\mathcal{AS}$-sets : we call them the equivariant virtual Betti numbers of $X$. They are different from the equivariant virtual Betti numbers of \cite{GF}.

\subsection{The homological equivariant Nash constructible filtration}

To any $\mathcal{AS}$-set $T$, we can associate its semialgebraic chain complex $C_*(T)$, which we can equip with the (bounded and increasing) Nash constructible filtration $\mathcal{N}_{\bullet} C_*(T)$ (see \cite{MCP} section 3) :
$$0 = \mathcal{N}_{-q-1} C_q(T) \subset \mathcal{N}_{-q} C_q(T) \subset \cdots \subset \mathcal{N}_{-1} C_q(T) \subset \mathcal{N}_{0} C_q(T) = C_q(T).$$
This filtration on chain level induces a filtration on the Borel-Moore homology with $\mathbb{Z}_2$-coefficients of $T$.

The Nash constructible filtration $\mathcal{N} C_*$ is a functor with respect to proper continuous maps with $\mathcal{AS}$ graph. It is additive on closed inclusions so that we can recover from the induced spectral sequence the virtual Betti numbers of $\mathcal{AS}$-sets (\cite{GF-MI}, see also \cite{MCP-VB}). If $T$ is an affine real algebraic variety, the filtered complex $\mathcal{N}_{\bullet} C_*(T)$ induces the weight spectral sequence of $T$ and its weight filtration on Borel-Moore homology (see \cite{MCP} subsection 1C).

The Nash constructible filtration can also be dualized to induce a cohomological Nash filtration on the cochain complex $C^*(T)$ (see \cite{LP}  section 4) : we denote it by $\mathcal{N}^{\bullet} C^*(T)$. The functor $\mathcal{N} C^*$ is contravariant and have the cohomological counterparts of the properties of $\mathcal{N} C_*$.
\\

In this paragraph, we are going to define an equivariant analog of the Nash constructible filtration on the equivariant chain complex $C_*(X;G)$ of any $G$-$\mathcal{AS}$-set $X$.

So let $X$ be a $G$-$\mathcal{AS}$-set. For each $k \in \mathbb{N}$, we consider the Nash constructible filtration 
$$0 = \mathcal{N}_{-q-1} C_q\left(X_{(k)}\right) \subset \mathcal{N}_{-q} C_q\left(X_{(k)}\right) \subset \cdots \subset \mathcal{N}_{-1} C_q\left(X_{(k)}\right) \subset \mathcal{N}_{0} C_q\left(X_{(k)}\right) = C_q\left(X_{(k)}\right).$$
The closed inclusions $X_k \hookrightarrow X_{k+1}$, $k \in \mathbb{N}$ (see subsection 3.2 above), induce an inductive system of injections of filtered chain complexes
$$\mathcal{N}_{\bullet} C_*\left(X_{(k)}\right) \rightarrow \mathcal{N}_{\bullet} C_*\left(X_{(k+1)}\right)$$
(see Theorem 3.6 of \cite{MCP}). We also denote by $\mathcal{N}_{\bullet}$ the induced direct limit filtration on the complex $C_*(X ; G)$. Notice that 
\begin{itemize}
	\item for each $p$ and $q$, $\mathcal{N}_p C_q(X ; G) = \varinjlim_{k \in \mathbb{N}} \mathcal{N}_p C_q\left(X_{(k)}\right)$,
	\item for each $q$, we have
$$0 = \mathcal{N}_{-q-1} C_q(X ; G) \subset \mathcal{N}_{-q} C_q(X ; G) \subset \cdots \subset \mathcal{N}_{-1} C_q(X ; G) \subset \mathcal{N}_{0} C_q(X ; G) = C_q(X ; G),$$
	\item $\mathcal{N}_{\bullet} C_*(X ; G)$ is bounded filtration in the sense of \cite{McCleary} Theorem 2.6.
\end{itemize}

\begin{de} \label{defequivnashcontr} We call the filtered complex $\mathcal{N}_{\bullet} C_*(X ; G)$ the equivariant Nash constructible filtration of $X$.
\end{de}

\begin{rem} The filtered complex $\mathcal{N}_{\bullet} C_*(X ; G)$ is well-defined up to filtered chain complex isomorphism by remark \ref{remdifemb} and the fact that the functorial Nash filtration is invariant under Nash isomorphisms (a Nash isomorphism is in particular a homeomorphism with $\mathcal{AS}$-graph).
\end{rem}

We are now going to show that the operation which associates to any $G$-$\mathcal{AS}$-set its equivariant Nash constructible filtration is an additive and acyclic functor. These properties are induced by the functoriality, the additivity and the acyclicity of the Nash constructible filtration (Theorem 3.6 of \cite{MCP}).

\begin{theo} \label{functequivnashfil}The map which associates to a $G$-$\mathcal{AS}$-set $X$ its equivariant Nash constructible filtration $\mathcal{N}_{\bullet} C_*(X ; G)$ is a functor with respect to equivariant continuous proper maps with $\mathcal{AS}$-graph.
\end{theo}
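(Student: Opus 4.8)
The plan is to reduce functoriality of $X \mapsto \mathcal{N}_\bullet C_*(X;G)$ to functoriality of the ordinary Nash constructible filtration (Theorem 3.6 of \cite{MCP}) applied levelwise to the quotients $X_{(k)} = \pi_k(X \times V_N(\mathbb{R}^k))$, together with the functoriality of the quotient construction for free $G$-$\mathcal{AS}$-sets established in subsection \ref{functorialityquotientfree}. So let $f : X \rightarrow X'$ be an equivariant proper continuous map with $\mathcal{AS}$-graph between two $G$-$\mathcal{AS}$-sets. First I would form, for each $k \in \mathbb{N}$, the product map $f \times \mathrm{id}_{V_N(\mathbb{R}^k)} : X \times V_N(\mathbb{R}^k) \rightarrow X' \times V_N(\mathbb{R}^k)$. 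This is again equivariant (for the diagonal action), proper (product of proper maps), and has $\mathcal{AS}$-graph (the graph of $f \times \mathrm{id}$ is, up to reordering coordinates, the product of the graph of $f$ with the diagonal of $V_N(\mathbb{R}^k) \times V_N(\mathbb{R}^k)$, hence an $\mathcal{AS}$-set). Since both $X \times V_N(\mathbb{R}^k)$ and $X' \times V_N(\mathbb{R}^k)$ are free $G$-$\mathcal{AS}$-sets (their $\mathcal{AS}$-closures are $\overline{X}^{\mathcal{AS}} \times V_N(\mathbb{R}^k)$ and $\overline{X'}^{\mathcal{AS}} \times V_N(\mathbb{R}^k)$, on which $G$ acts freely), the construction of subsection \ref{functorialityquotientfree} yields a continuous map with $\mathcal{AS}$-graph
$$f_{(k)} := (f \times \mathrm{id}_{V_N(\mathbb{R}^k)})_{/G} : X_{(k)} \rightarrow X'_{(k)},$$
which is proper by remark \ref{quotientmappingproper}.

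Next I would check compatibility of the $f_{(k)}$ with the inclusions $X_{(k)} \hookrightarrow X_{(k+1)}$. This amounts to the commutativity of the square obtained by quotienting the obvious commutative square $X \times V_N(\mathbb{R}^k) \hookrightarrow X \times V_N(\mathbb{R}^{k+1})$, $X' \times V_N(\mathbb{R}^k) \hookrightarrow X' \times V_N(\mathbb{R}^{k+1})$ under the maps $f \times \mathrm{id}$; by the functoriality lemma \ref{quotientmappingfunctorial} (applied to the two ways around the square, using that $(\mathrm{incl})_{/G}$ is the natural inclusion of quotients as made explicit in subsection \ref{functorialityquotientfree}) the square
$$\begin{array}{ccc}
X_{(k)} & \stackrel{f_{(k)}}{\longrightarrow} & X'_{(k)}\\
\downarrow & & \downarrow\\
X_{(k+1)} & \stackrel{f_{(k+1)}}{\longrightarrow} & X'_{(k+1)}
\end{array}$$
commutes. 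Now apply the functor $\mathcal{N}_\bullet C_*$ of \cite{MCP} (Theorem 3.6): since each $f_{(k)}$ is proper continuous with $\mathcal{AS}$-graph, it induces a morphism of filtered chain complexes $\mathcal{N}_\bullet C_*(X_{(k)}) \rightarrow \mathcal{N}_\bullet C_*(X'_{(k)})$, and these commute with both the vertical inclusion-induced maps (again by \cite{MCP} Theorem 3.6, naturality) in the square above. Passing to the inductive limit over $k$ yields a morphism of filtered chain complexes $\mathcal{N}_\bullet C_*(X;G) \rightarrow \mathcal{N}_\bullet C_*(X';G)$, which I define to be $\mathcal{N}_\bullet C_*(f;G)$. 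Functoriality $(h \circ f)_{(k)} = h_{(k)} \circ f_{(k)}$ and $(\mathrm{id}_X)_{(k)} = \mathrm{id}_{X_{(k)}}$ follow from lemma \ref{quotientmappingfunctorial} together with the compatibility of products with composition, and are preserved under the levelwise functor $\mathcal{N}_\bullet C_*$ and the inductive limit; well-definedness (independence of the embeddings and of the chosen generators of the invariant algebras) follows from remark \ref{remdifemb} and the invariance of the Nash filtration under Nash isomorphisms.

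I expect the only delicate point to be verifying carefully that $f \times \mathrm{id}_{V_N(\mathbb{R}^k)}$ genuinely falls into the framework of subsection \ref{functorialityquotientfree} — that is, that it is a map between free $G$-$\mathcal{AS}$-sets of the prescribed embedded form with compact affine Zariski closures and coordinate-permutation actions, so that $(f \times \mathrm{id})_{/G}$ is available and has $\mathcal{AS}$-graph via proposition \ref{quotientmappingAS}. This is essentially bookkeeping: one must observe that the $\mathcal{AS}$-graph property of $f \times \mathrm{id}$ reduces, after identifying $\Gamma_{f \times \mathrm{id}}$ with $\Gamma_f \times \Delta_{V_N(\mathbb{R}^k)}$ under a permutation of coordinates, to the $\mathcal{AS}$-graph property of $f$ (which holds by hypothesis) and the fact that the diagonal of a nonsingular algebraic set is algebraic, hence $\mathcal{AS}$. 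Everything else is a routine transport of the non-equivariant statement of \cite{MCP} through the exact sequence of functors ``product with $V_N(\mathbb{R}^k)$'', ``quotient by $G$'', ``$\mathcal{N}_\bullet C_*$'', ``$\varinjlim_k$''.
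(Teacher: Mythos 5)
Your proposal is correct and follows essentially the same route as the paper's proof: form $f\times \mathrm{id}_{V_N(\mathbb{R}^k)}$, note it is equivariant, proper and has $\mathcal{AS}$-graph (the graph being the product of $\Gamma_f$ with the graph of the identity of $V_N(\mathbb{R}^k)$), pass to the quotient via subsection \ref{functorialityquotientfree} to get $f_{(k)}$ (continuous, proper, $\mathcal{AS}$-graph by proposition \ref{quotientcont}, remark \ref{quotientmappingproper} and proposition \ref{quotientmappingAS}), apply the non-equivariant Nash constructible filtration functor levelwise, check compatibility with the inclusions $X_{(k)}\hookrightarrow X_{(k+1)}$, and take the direct limit. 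Your extra care about compatibility of the squares and well-definedness via remark \ref{remdifemb} only makes explicit what the paper summarizes as ``by functoriality of the constructions.''
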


\begin{proof} Let $X$ and $Y$ be two $G$-$\mathcal{AS}$-sets and let $f : X \rightarrow Y$ be an equivariant continuous proper maps with $\mathcal{AS}$-graph between $X$ and $Y$. 

Let $k \in \mathbb{N}$. We consider the cartesian product map 
$$f \times id : X \times V_{N}(\mathbb{R}^k) \rightarrow X \times V_{N}(\mathbb{R}^k)$$
of $f$ with the identity of $V_{N}(\mathbb{R}^k)$. $f \times id$ is again an equivariant continuous proper map, and has $\mathcal{AS}$-graph as well (the graph of $f \times id$ is isomorphic to the cartesian product of the graph of $f$, which is $\mathcal{AS}$, and the graph of the identity of $V_{N}(\mathbb{R}^k)$, which is algebraic).

Now, since the sets $X \times V_{N}(\mathbb{R}^k)$ and $Y \times V_{N}(\mathbb{R}^k)$ are free $G$-$\mathcal{AS}$-sets, the map $f \times id$ induces a map $f_{(k)} := (f \times id)_{/G} : X_{(k)} \rightarrow Y_{(k)}$ (see subsection \ref{functorialityquotientfree}) which is continuous (proposition \ref{quotientcont}), proper (remark \ref{quotientmappingproper}) and has $\mathcal{AS}$-graph (proposition \ref{quotientmappingAS}). As a consequence, by functoriality of the Nash constructible filtration, it induces a filtered chain map
$${f_{(k)}}_{*} : \mathcal{N}_{\bullet} C_*\left(X_{(k)}\right) \rightarrow \mathcal{N}_{\bullet} C_*\left(Y_{(k)}\right).$$ 
Furthermore, the operation which associates to $f$ the map ${f_{(k)}}_{*}$ is functorial (use lemma \ref{quotientmappingfunctorial}).

By functoriality of the constructions, the maps ${f_{(k)}}_{*}$, $k \in \mathbb{N}$, are compatible with the injections $\mathcal{N}_{\bullet} C_*\left(X_{(k)}\right) \rightarrow \mathcal{N}_{\bullet} C_*\left(X_{(k+1)}\right)$ and $\mathcal{N}_{\bullet} C_*\left(Y_{(k)}\right) \rightarrow \mathcal{N}_{\bullet} C_*\left(Y_{(k+1)}\right)$, $k \in \mathbb{N}$ (see also remark \ref{quotientmappingpoly}) : precisely, we have commutative diagrams
$$\begin{array}{ccc} 
\mathcal{N}_{\bullet} C_*\left(X_{(k)}\right) & \stackrel{}{\longrightarrow} & \mathcal{N}_{\bullet} C_*\left(X_{(k+1)}\right) \\
 ~ \Big\downarrow {\scriptstyle {f_{(k)}}_{*}} & & ~ \Big\downarrow {\scriptstyle {f_{(k+1)}}_{*}}\\
\mathcal{N}_{\bullet} C_*\left(Y_{(k)}\right) & \stackrel{}{\longrightarrow} & \mathcal{N}_{\bullet} C_*\left(Y_{(k+1)}\right)
\end{array}$$ 
which fit into a direct limit map $f^G_* : \mathcal{N}_{\bullet} C_*(X ; G) \rightarrow \mathcal{N}_{\bullet} C_*(Y ; G)$. Of course, the operation which to $f$ associates $f^G_*$ is functorial as well.
\end{proof}

The additivity property of the Nash constructible filtration (Theorem 3.6 of \cite{MCP}) induces the additivity property of the equivariant Nash constructible filtration with respect to equivariant closed inclusions of $G$-$\mathcal{AS}$-sets :

\begin{theo} \label{addequivnashfil} Any equivariant closed inclusion of $G$-$\mathcal{AS}$-sets $Y \subset X$ induces a short exact sequence of filtered complexes
$$0 \rightarrow \mathcal{N}_{\bullet} C_*(Y ; G) \rightarrow \mathcal{N}_{\bullet} C_*(X ; G) \rightarrow \mathcal{N}_{\bullet} C_*(X \setminus Y ; G) \rightarrow 0.$$
\end{theo}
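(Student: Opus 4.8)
The plan is to reduce the statement, level by level, to the additivity of the non-equivariant Nash constructible filtration (Theorem 3.6 of \cite{MCP}). The first step is to observe that an equivariant closed inclusion $Y \subset X$ of $G$-$\mathcal{AS}$-sets induces, for each $k \in \mathbb{N}$, an equivariant closed inclusion $Y \times V_N(\mathbb{R}^k) \subset X \times V_N(\mathbb{R}^k)$ of free $G$-$\mathcal{AS}$-sets. Applying the geometric quotient and using the functoriality and regularity results of section \ref{sectionquotientAS} (in particular corollary \ref{quotientas}, proposition \ref{quotientcont} and lemma \ref{quotientinabigger}), this yields a closed inclusion of $\mathcal{AS}$-sets $Y_{(k)} \hookrightarrow X_{(k)}$, with complement $X_{(k)} \setminus Y_{(k)}$. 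Here I would take care to check that $\pi_k\left((X \setminus Y) \times V_N(\mathbb{R}^k)\right) = X_{(k)} \setminus Y_{(k)}$, which follows because the fiber of $\pi_k$ over a point of $X_{(k)}$ is a single $G$-orbit (remark \ref{indepgen}), so a point of $X_{(k)}$ lies in the image of $Y \times V_N(\mathbb{R}^k)$ exactly when its whole orbit does, i.e. when the corresponding orbit in $X \times V_N(\mathbb{R}^k)$ lies in the ($G$-stable) closed set $Y \times V_N(\mathbb{R}^k)$.

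The second step is to apply Theorem 3.6 of \cite{MCP} to the closed inclusion $Y_{(k)} \hookrightarrow X_{(k)}$, which gives a short exact sequence of filtered complexes
$$0 \rightarrow \mathcal{N}_{\bullet} C_*\left(Y_{(k)}\right) \rightarrow \mathcal{N}_{\bullet} C_*\left(X_{(k)}\right) \rightarrow \mathcal{N}_{\bullet} C_*\left(X_{(k)} \setminus Y_{(k)}\right) \rightarrow 0$$
for every $k$. The third step is to check that these sequences are compatible with the inductive system indexed by $k$: the inclusions $X_{(k)} \hookrightarrow X_{(k+1)}$, $Y_{(k)} \hookrightarrow Y_{(k+1)}$ and $\left(X_{(k)} \setminus Y_{(k)}\right) \hookrightarrow \left(X_{(k+1)} \setminus Y_{(k+1)}\right)$ are closed inclusions (respectively a Zariski-type inclusion as in subsection 3.2), inducing filtered injections, and the resulting squares commute by the functoriality of $\mathcal{N} C_*$ recalled in Theorem 3.6 of \cite{MCP}. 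One must also verify that the inclusion $X_{(k)} \setminus Y_{(k)} \hookrightarrow X_{(k+1)} \setminus Y_{(k+1)}$ is precisely the one induced on complements by the compatible closed inclusions, which is immediate from the set-theoretic description of $X_{(k)}$ and $Y_{(k)}$ as images under $\pi_k$ and $\pi_{k+1}$ restricted to the relevant products.

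The final step is to pass to the inductive limit over $k$. Since the direct limit functor is exact on the category of $\mathbb{Z}_2$-modules (filtered colimits are exact), taking $\varinjlim_{k}$ of the above short exact sequences of filtered complexes yields a short exact sequence
$$0 \rightarrow \mathcal{N}_{\bullet} C_*(Y ; G) \rightarrow \mathcal{N}_{\bullet} C_*(X ; G) \rightarrow \mathcal{N}_{\bullet} C_*(X \setminus Y ; G) \rightarrow 0,$$
using that $\mathcal{N}_p C_q(X ; G) = \varinjlim_k \mathcal{N}_p C_q\left(X_{(k)}\right)$ by construction (definition \ref{defequivnashcontr}), and similarly for $Y$ and $X \setminus Y$. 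I expect the only genuinely delicate point to be the bookkeeping in the second and third steps, namely identifying $X_{(k)} \setminus Y_{(k)}$ with the quotient $\mathcal{AS}$-set attached to the $G$-$\mathcal{AS}$-set $X \setminus Y$ and checking the strict compatibility of the three inductive systems of filtered complexes; once that is in place, exactness of filtered colimits finishes the argument with no further work.
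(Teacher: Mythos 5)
Your proposal is correct and follows essentially the same route as the paper's proof: pass to the closed inclusions $Y_{(k)} \subset X_{(k)}$ of quotient $\mathcal{AS}$-sets (with $X_{(k)} \setminus Y_{(k)} = (X \setminus Y)_{(k)}$, exactly the identification the paper notes in passing), apply the additivity statement of Theorem 3.6 of \cite{MCP} at each level $k$, and conclude by exactness of the direct limit. The extra checks you flag (fibers of $\pi_k$ being single $G$-orbits, compatibility of the three inductive systems) are precisely the bookkeeping the paper leaves implicit, together with the closedness of $\pi_k$ from lemma \ref{quotientproper}.
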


\begin{proof} Let $k \in \mathbb{N}$. The equivariant closed inclusion $Y \subset X$ induces an equivariant closed inclusion $Y \times V_{N}(\mathbb{R}^k) \subset X \times V_{N}(\mathbb{R}^k)$. We then apply the geometric quotient map $\pi_k$ to obtain the closed inclusion of $\mathcal{AS}$-sets $Y_{(k)} \subset X_{(k)}$ (recall lemma \ref{quotientinabigger}, and recall that $\pi_k$ is a closed map by lemma \ref{quotientproper}).

We then use the additivity of the Nash constructible filtration (Theorem 3.6 (2) of \cite{MCP}) to induce the short exact sequence of filtered complexes
$$0 \rightarrow \mathcal{N}_{\bullet} C_*(Y_{(k)}) \rightarrow \mathcal{N}_{\bullet} C_*(X_{(k)}) \rightarrow \mathcal{N}_{\bullet} C_*((X \setminus Y)_{(k)}) \rightarrow 0$$
(we have $X_{(k)} \setminus Y_{(k)} = (X \setminus Y)_{(k)}$). We conclude by taking the direct limit : the direct limit is an exact functor on modules.
\end{proof}

\begin{rem} By a diagram chasing, we also have short exact sequences 
$$0 \rightarrow \frac{\mathcal{N}_{p} C_*(Y ; G)}{\mathcal{N}_{p-1} C_*(Y ; G)} \rightarrow \frac{\mathcal{N}_{p} C_*(X ; G)}{\mathcal{N}_{p-1} C_*(X ; G)} \rightarrow \frac{\mathcal{N}_{p} C_*(X \setminus Y ; G)}{\mathcal{N}_{p-1} C_*(X \setminus Y ; G)} \rightarrow 0,$$
$p \leq 0$, of graded complexes.
\end{rem}

The acyclicity of the Nash constructible filtration induces the acyclicity of the equivariant Nash constructible filtration as well :

\begin{cor} \label{acyequivnashfil} Let
\begin{equation} \label{equivacyclicsquare}
\begin{array}{ccc} 
\widetilde{Y} & \stackrel{}{\longrightarrow} & \widetilde{X} \\
 ~ \Big\downarrow {} & & ~ \Big\downarrow {\scriptstyle s}\\
Y & \stackrel{i}{\longrightarrow} & X
\end{array}
\end{equation}
be an acyclic square of $G$-$\mathcal{AS}$-sets, i.e. a commutative diagram of $G$-$\mathcal{AS}$-sets and equivariant proper continuous map such that $i$ is an equivariant closed inclusion, $\widetilde{X} = s^{-1}(X)$ and the restriction $s : \widetilde{X} \setminus \widetilde{Y} \rightarrow X \setminus Y$ is an equivariant homeomorphism. It induces a short exact sequence of filtered complexes
$$0 \rightarrow \mathcal{N}_{\bullet} C_*(\widetilde{Y} ; G) \rightarrow \mathcal{N}_{\bullet} C_*(Y ; G) \oplus \mathcal{N}_{\bullet} C_*(\widetilde{X} ; G) \rightarrow \mathcal{N}_{\bullet} C_*(X ; G) \rightarrow 0.$$
\end{cor}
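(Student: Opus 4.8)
The plan is to reduce the equivariant statement to the non-equivariant acyclicity of the Nash constructible filtration (Theorem 3.6 of \cite{MCP}), applied level by level to the quotients $X_{(k)}$, and then to pass to the direct limit exactly as in the proofs of theorems \ref{functequivnashfil} and \ref{addequivnashfil}. So first I would fix $k \in \mathbb{N}$ and take the cartesian product of the given acyclic square with the Stiefel manifold $V_N(\mathbb{R}^k)$, obtaining
$$\begin{array}{ccc}
\widetilde{Y} \times V_N(\mathbb{R}^k) & \stackrel{}{\longrightarrow} & \widetilde{X} \times V_N(\mathbb{R}^k) \\
 ~ \Big\downarrow {} & & ~ \Big\downarrow {\scriptstyle s \times id}\\
Y \times V_N(\mathbb{R}^k) & \stackrel{i \times id}{\longrightarrow} & X \times V_N(\mathbb{R}^k)
\end{array}$$
which is again a commutative diagram of free $G$-$\mathcal{AS}$-sets and equivariant proper continuous maps with $\mathcal{AS}$-graph: $i \times id$ is an equivariant closed inclusion, $(s \times id)^{-1}(X \times V_N(\mathbb{R}^k)) = \widetilde{X} \times V_N(\mathbb{R}^k)$, and the restriction of $s \times id$ over the complement is an equivariant homeomorphism because $s$ restricts to one.

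Next I would apply the geometric quotient functor $(\,\cdot\,)_{/G}$ of subsection \ref{functorialityquotientfree}. By proposition \ref{quotientcont}, remark \ref{quotientmappingproper} and proposition \ref{quotientmappingAS}, the map $s_{(k)} := (s \times id)_{/G} : \widetilde{X}_{(k)} \rightarrow X_{(k)}$ is a proper continuous map with $\mathcal{AS}$-graph, and by lemma \ref{quotientinabigger} the inclusions $i$ quotient to closed inclusions $\widetilde{Y}_{(k)} \subset \widetilde{X}_{(k)}$ and $Y_{(k)} \subset X_{(k)}$. The two conditions defining an acyclic square descend to the quotients: $s_{(k)}^{-1}(X_{(k)}) = \widetilde{X}_{(k)}$ (since the fibres of $\pi_k$ are $G$-orbits and $s \times id$ is equivariant), and $s_{(k)}$ restricts to a homeomorphism $\widetilde{X}_{(k)} \setminus \widetilde{Y}_{(k)} \rightarrow X_{(k)} \setminus Y_{(k)}$ because $s \times id$ does so equivariantly over the corresponding open sets and $\pi_k$ is an open quotient map (lemma \ref{quotientproper}); here one uses $\widetilde{X}_{(k)} \setminus \widetilde{Y}_{(k)} = (\widetilde{X} \setminus \widetilde{Y})_{(k)}$ and similarly for $X$. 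Thus the square
$$\begin{array}{ccc}
\widetilde{Y}_{(k)} & \stackrel{}{\longrightarrow} & \widetilde{X}_{(k)} \\
 ~ \Big\downarrow {} & & ~ \Big\downarrow {\scriptstyle s_{(k)}}\\
Y_{(k)} & \stackrel{}{\longrightarrow} & X_{(k)}
\end{array}$$
is an acyclic square of $\mathcal{AS}$-sets in the sense of \cite{MCP}, and Theorem 3.6 of \cite{MCP} (acyclicity of $\mathcal{N} C_*$) yields a short exact sequence of filtered complexes
$$0 \rightarrow \mathcal{N}_{\bullet} C_*(\widetilde{Y}_{(k)}) \rightarrow \mathcal{N}_{\bullet} C_*(Y_{(k)}) \oplus \mathcal{N}_{\bullet} C_*(\widetilde{X}_{(k)}) \rightarrow \mathcal{N}_{\bullet} C_*(X_{(k)}) \rightarrow 0.$$

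Finally I would check that these short exact sequences are compatible with the inclusions $X_{(k)} \hookrightarrow X_{(k+1)}$ (and likewise for $Y$, $\widetilde{X}$, $\widetilde{Y}$): this follows from the functoriality of the quotient construction (lemma \ref{quotientmappingfunctorial}, remark \ref{quotientmappingpoly}) together with the compatibility of the Nash constructible filtration with closed inclusions (Theorem 3.6 of \cite{MCP}), exactly as in the proof of theorem \ref{functequivnashfil}. Taking the direct limit over $k$ — which is an exact functor on $\mathbb{Z}_2$-modules and commutes with the filtration by construction, since $\mathcal{N}_p C_q(X;G) = \varinjlim_k \mathcal{N}_p C_q(X_{(k)})$ — produces the desired short exact sequence
$$0 \rightarrow \mathcal{N}_{\bullet} C_*(\widetilde{Y} ; G) \rightarrow \mathcal{N}_{\bullet} C_*(Y ; G) \oplus \mathcal{N}_{\bullet} C_*(\widetilde{X} ; G) \rightarrow \mathcal{N}_{\bullet} C_*(X ; G) \rightarrow 0.$$
The only genuinely delicate point is verifying that the acyclic-square conditions pass to the quotients $X_{(k)}$, and in particular that $s_{(k)}$ restricts to a homeomorphism over the complement; but this is a formal consequence of $s \times id$ being equivariant and of $\pi_k$ being a proper open closed quotient map by the orbit equivalence relation (lemma \ref{quotientproper}), so I expect no real obstacle, only bookkeeping.
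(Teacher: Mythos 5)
Your proof is correct, but it follows a genuinely different route from the paper's. The paper does not go back to the finite-level quotients at all: it applies the already-established equivariant additivity (theorem \ref{addequivnashfil}) to the two equivariant closed inclusions $\widetilde{Y} \subset \widetilde{X}$ and $Y \subset X$, observes that the restriction $s : \widetilde{X} \setminus \widetilde{Y} \rightarrow X \setminus Y$, being an equivariant homeomorphism with $\mathcal{AS}$-graph (hence proper), induces an isomorphism of filtered complexes $\mathcal{N}_{\bullet} C_*(\widetilde{X} \setminus \widetilde{Y} ; G) \cong \mathcal{N}_{\bullet} C_*(X \setminus Y ; G)$ by theorem \ref{functequivnashfil}, and then extracts the stated short exact sequence from the resulting commutative ladder of additivity sequences by a diagram chase -- so at the equivariant level the acyclicity is a formal consequence of additivity plus functoriality. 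You instead redo the limit construction: you cross the acyclic square with $V_N(\mathbb{R}^k)$, descend it to an acyclic square of the $\mathcal{AS}$-sets $X_{(k)}$, invoke the non-equivariant acyclicity of Theorem 3.6 of \cite{MCP} at each level, and pass to the (exact) direct limit. The delicate point you identify -- that the acyclic-square conditions descend to the quotients -- is handled correctly (the fibres of $\pi_k$ are $G$-orbits, $\pi_k$ is a closed/open topological quotient map by lemma \ref{quotientproper}, and $\widetilde{X}_{(k)} \setminus \widetilde{Y}_{(k)} = (\widetilde{X} \setminus \widetilde{Y})_{(k)}$ as in the proof of theorem \ref{addequivnashfil}), but it is exactly the bookkeeping the paper's argument avoids. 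What your route buys in exchange is the explicit description of the equivariant acyclicity sequence as the direct limit of the level-$k$ non-equivariant ones, in the same spirit as the proofs of theorems \ref{functequivnashfil} and \ref{addequivnashfil}; what the paper's route buys is brevity and independence from any further descent verifications.
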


\begin{proof} The above acyclic square induces, by additivity of the equivariant Nash constructible filtration, the following commutative diagram of short exact sequences
$$\begin{array}{ccccccccc}
0 & \rightarrow & \mathcal{N}_{\bullet} C_*(\widetilde{Y} ; G) & \rightarrow & \mathcal{N}_{\bullet} C_*(\widetilde{X} ; G) & \rightarrow & \mathcal{N}_{\bullet} C_*(\widetilde{X} \setminus \widetilde{Y} ; G) & \rightarrow & 0 \\
   &                   & \Big\downarrow {}&    & \Big\downarrow  {}     &     {}                 & \Big\downarrow {\scriptstyle \cong} &&  \\
0 & \rightarrow & \mathcal{N}_{\bullet} C_*(Y ; G) & \rightarrow & \mathcal{N}_{\bullet} C_*(X ; G) & \rightarrow & \mathcal{N}_{\bullet} C_*(X \setminus Y ; G) & \rightarrow & 0
\end{array}$$
Now, the above short exact sequence of the statement follows from a diagram chasing.
\end{proof}

\begin{rem} By a diagram chasing argument, we have short exact sequences 
$$0 \rightarrow \frac{\mathcal{N}_{p} C_*(\widetilde{Y} ; G)}{\mathcal{N}_{p-1} C_*(\widetilde{Y} ; G)} \rightarrow \frac{\mathcal{N}_{p} C_*(Y ; G)}{\mathcal{N}_{p-1} C_*(Y ; G)} \oplus \frac{\mathcal{N}_{p} C_*(\widetilde{X} ; G)}{\mathcal{N}_{p-1} C_*(\widetilde{X} ; G)} \rightarrow \frac{\mathcal{N}_{p} C_*(X ; G)}{\mathcal{N}_{p-1} C_*(X ; G)} \rightarrow 0.$$
\end{rem}

\subsection{The induced equivariant weight spectral sequence} \label{subsechomequivweightspecseq}

For $X$ a $G$-$\mathcal{AS}$-set, the filtered complex $\mathcal{N}_{\bullet} C_*(X ; G)$ induces a spectral sequence that we denote by $E^*_{*,*} (X ; G)$. Since the equivariant Nash filtration filtration is bounded, the induced spectral sequence $E^*_{*,*} (X ; G)$ converges to $H_*(C_*(X ; G)) = H_*(X ; G)$.

\begin{de} \label{defhomequivweightfil} We call $E^*_{*,*} (X ; G)$ the equivariant weight spectral sequence of $X$ and we call the induced filtration
$$0 = \mathcal{N}_{-q-1} H_q(X ; G) \subset \mathcal{N}_{-q} H_q(X ; G) \subset \cdots \subset \mathcal{N}_{-1} H_q(X ; G) \subset \mathcal{N}_{0} H_q(X ; G) = H_q(X ; G)$$
on the equivariant homology of $X$ with closed supports, the equivariant weight filtration of $X$.
\end{de}

\begin{rem} These equivariant weight spectral sequence and filtration are different from the ones obtained in \cite{Pri-EWF}.
\end{rem}

Moreover, $E^*_{*,*} (X ; G)$ is the direct limit spectral sequence of the inductive system of spectral sequences $E^*_{*,*}(X_{(k)})$, $k \in \mathbb{N}$, where $E^*_{*,*}( \, \cdot \, )$ is the weight spectral sequence induced by the Nash constructible filtration (see \cite{MCP}). Indeed, the direct limit of the inductive system of spectral sequences induced by an inductive system of filtered complexes is the spectral sequence induced by the direct limit of the filtered complexes (use the definition of the direct limit and the exactness of the direct limit functor on modules).  

In particular, for all, $r$, $p$, $q \in \mathbb{Z}$,
$$E^r_{p,q} (X ; G) = \varinjlim_{k \in \mathbb{N}} E^r_{p,q}\left(X_{(k)}\right).$$

As in \cite{MCP} subsection 1C, we reindex the spectral sequences $E^r_{p,q}$ into spectral sequences $\widetilde{E}^{r'}_{p',q'}$, well-defined from $r' = 1$, by setting $r' = r+1$, $p' = 2 p + q$, $q' = -p$. Since, for each $k \in \mathbb{N}$, the non-zero terms of $\widetilde{E}^{r}_{p,q}\left(X_{(k)}\right)$ lie in the closed triangle with vertices $(0,0)$, $(0, d_k)$ and $(d_k, 0)$, where $d_k$ is the dimension of $X_{(k)}$, the spectral sequence $\widetilde{E}^r_{p,q} (X ; G)$ is a first quadrant spectral sequence.

We will show in theorem \ref{equivspecseqbound} below that it is right-bounded as well. First, let us mention how the additivity and acyclicity properties of the equivariant Nash constructible filtration translates on the induced spectral sequence :

\begin{lem} \label{addacyclequivweightspecseq} \begin{enumerate}
	\item Let $Y \subset X$ be an equivariant closed inclusion of $G$-$\mathcal{AS}$-sets. For any $q \in \mathbb{N}$, it induces a long exact sequence
$$ \cdots \rightarrow \widetilde{E}^2_{p,q} (Y ; G) \rightarrow \widetilde{E}^2_{p,q} (X ; G) \rightarrow \widetilde{E}^2_{p,q} (X \setminus Y ; G) \rightarrow \widetilde{E}^2_{p-1,q} (Y ; G) \rightarrow \cdots$$
 on the $q$\textsuperscript{th} line of the second page of the (reindexed) equivariant weight spectral sequence.
	\item Consider an acyclic square (\ref{equivacyclicsquare}) of $G$-$\mathcal{AS}$-sets. For any $q \in \mathbb{N}$, it induces a long exact sequence
$$ \cdots \rightarrow \widetilde{E}^2_{p,q} (\widetilde{Y} ; G) \rightarrow \widetilde{E}^2_{p,q} (Y ; G) \oplus \widetilde{E}^2_{p,q} (\widetilde{X} ; G)  \rightarrow \widetilde{E}^2_{p,q} (X ; G) \rightarrow \widetilde{E}^2_{p-1,q} (\widetilde{Y} ; G) \rightarrow \cdots$$
 on the $q$\textsuperscript{th} line of the second page of the equivariant weight spectral sequence.
\end{enumerate}
\end{lem}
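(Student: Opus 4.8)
The plan is to derive both long exact sequences from the short exact sequences of \emph{associated graded} complexes already recorded in the remarks after Theorem~\ref{addequivnashfil} and Corollary~\ref{acyequivnashfil}, by applying, for each filtration degree $p$ separately, the long exact homology sequence of a short exact sequence of chain complexes, and then transporting the result through the reindexing $r' = r+1$, $p' = 2p+q$, $q' = -p$. The key observation is that, through this reindexing, the page $\widetilde{E}^2$ \emph{is} the page $E^1$ of the spectral sequence associated with the filtered complex $\mathcal{N}_{\bullet} C_*(\, \cdot \, ; G)$: its part in filtration degree $p$ is the homology of the graded complex $\mathcal{N}_p C_*(\, \cdot \, ; G)/\mathcal{N}_{p-1} C_*(\, \cdot \, ; G)$, and under the reindexing this filtration degree $p$ becomes the constant line $q' = -p$ of $\widetilde{E}^2$, while the homological degree becomes the running coordinate $p'$ along that line, with the connecting homomorphism (which lowers the homological degree by one) becoming a map $\widetilde{E}^2_{p',q'} \to \widetilde{E}^2_{p'-1,q'}$.

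Granting this, assertion (1) is immediate: fix $p$, take the short exact sequence of chain complexes
$$0 \to \frac{\mathcal{N}_p C_*(Y ; G)}{\mathcal{N}_{p-1} C_*(Y ; G)} \to \frac{\mathcal{N}_p C_*(X ; G)}{\mathcal{N}_{p-1} C_*(X ; G)} \to \frac{\mathcal{N}_p C_*(X \setminus Y ; G)}{\mathcal{N}_{p-1} C_*(X \setminus Y ; G)} \to 0$$
from the remark after Theorem~\ref{addequivnashfil}, write down its long exact homology sequence, and reindex as above to obtain
$$\cdots \to \widetilde{E}^2_{p,q}(Y ; G) \to \widetilde{E}^2_{p,q}(X ; G) \to \widetilde{E}^2_{p,q}(X \setminus Y ; G) \to \widetilde{E}^2_{p-1,q}(Y ; G) \to \cdots$$
on the $q$-th line of $\widetilde{E}^2$, for the $q$ corresponding to the chosen filtration degree. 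Naturality of the long exact homology sequence ensures compatibility with the maps of the diagram. Assertion (2) is proved in exactly the same way, starting instead from the short exact sequence of graded complexes
$$0 \to \frac{\mathcal{N}_p C_*(\widetilde{Y} ; G)}{\mathcal{N}_{p-1} C_*(\widetilde{Y} ; G)} \to \frac{\mathcal{N}_p C_*(Y ; G)}{\mathcal{N}_{p-1} C_*(Y ; G)} \oplus \frac{\mathcal{N}_p C_*(\widetilde{X} ; G)}{\mathcal{N}_{p-1} C_*(\widetilde{X} ; G)} \to \frac{\mathcal{N}_p C_*(X ; G)}{\mathcal{N}_{p-1} C_*(X ; G)} \to 0$$
supplied by the remark after Corollary~\ref{acyequivnashfil}, and taking its long exact homology sequence before reindexing.

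The routine part is the homological algebra (the long exact sequence of a short exact sequence of chain complexes, and its naturality). The only genuinely delicate point, and the one I would check most carefully, is the bookkeeping of the reindexing: one must verify that it is precisely the filtration degree $p$ that is held fixed along each of these sequences — so that distinct values of $q$ yield independent sequences, one per line of $\widetilde{E}^2$ — and that the direction in which the long exact sequence runs is the coordinate $p'$ of $\widetilde{E}^2$ and not $q'$, i.e.\ that the connecting map is indeed of the form $\widetilde{E}^2_{p,q} \to \widetilde{E}^2_{p-1,q}$ rather than one shifting the line index.
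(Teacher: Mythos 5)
Your proposal is correct and takes essentially the same route as the paper: the paper's proof is a one-line reference to the argument of McCrory--Parusi\'nski \S 1C, which is exactly what you spell out, namely that the short exact sequences of graded complexes recorded in the remarks after Theorem \ref{addequivnashfil} and Corollary \ref{acyequivnashfil} yield, for each fixed filtration degree $p$, a long exact homology sequence. Your bookkeeping of the reindexing is also right ($\widetilde{E}^2 = E^1$, fixed $p$ corresponds to the fixed line $q' = -p$, and the connecting map lowers the running coordinate $p' = p + (p+q)$ by one), so the resulting sequences are precisely those stated on the $q$\textsuperscript{th} line.
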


\begin{proof} These long exact sequences are induced by the short exact sequences of additivity and acyclicity of the equivariant Nash constructible filtration just as in \cite{MCP} section 1C.
\end{proof}

\begin{theo} \label{equivspecseqbound} Let $d$ be the dimension of $X$. If $r \geq 2$ and $\widetilde{E}^r_{p,q} (X ; G) \neq 0$ then $0 \leq p \leq \dim X$. 
\end{theo}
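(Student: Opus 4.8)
\emph{Plan.} Since for $r\ge 2$ the page $\widetilde E^{r+1}_{p,q}(X;G)$ is a subquotient of $\widetilde E^{r}_{p,q}(X;G)$, it suffices to prove the bound for $r=2$; and since $\widetilde E^{*}_{*,*}(X;G)$ is a first-quadrant spectral sequence, only the inequality $p\le\dim X$ needs an argument, i.e. one must show that $\widetilde E^2_{p,q}(X;G)=0$ whenever $p>\dim X$. I would prove this by induction on $d:=\dim X$, exploiting that $\widetilde E^2_{p,q}(X;G)=\varinjlim_{k}\widetilde E^2_{p,q}(X_{(k)})$ together with the additivity and acyclicity long exact sequences of lemma \ref{addacyclequivweightspecseq}. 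The point to keep in mind is that, contrary to the non-equivariant case where boundedness is automatic because $C_*(T)$ lives in degrees $[0,\dim T]$, here $C_*(X;G)$ is unbounded and $\dim X_{(k)}$ grows linearly in $k$; the content of the theorem is that the ``extra'' Stiefel directions are compact and nonsingular and hence contribute nothing to the $p$-filtration in the limit.

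\emph{Reduction to the compact nonsingular case.} Write $Z\subset\overline{X}^{\mathcal{AS}}$ for the union of the non-manifold locus of $X$ and of the boundary $\overline{X}^{\mathcal{AS}}\setminus X$; it is an equivariant $\mathcal{AS}$-subset of dimension $<d$. Repeated use of lemma \ref{addacyclequivweightspecseq}(1) for the equivariant closed inclusions involving $Z$ and $\mathrm{Sing}(X)$, combined with the induction hypothesis applied to these lower-dimensional pieces (so that their $\widetilde E^2_{p,q}$ and $\widetilde E^2_{p-1,q}$ both vanish once $p>d$), shows that the vanishing $\widetilde E^2_{p,q}(X;G)=0$ for $p>d$ is equivalent to the same vanishing for the compact $\mathcal{AS}$-set $\overline{X}^{\mathcal{AS}}$. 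I would then invoke an equivariant resolution of singularities of $\overline{X}^{\mathcal{AS}}$ (available since $G$ is finite and acts by biregular isomorphisms): this gives an acyclic square (\ref{equivacyclicsquare}) of $G$-$\mathcal{AS}$-sets in which $\widetilde X$ is compact nonsingular of dimension $d$ and the two other corners have dimension $<d$. Lemma \ref{addacyclequivweightspecseq}(2) and the induction hypothesis applied to those corners then yield $\widetilde E^2_{p,q}(\overline{X}^{\mathcal{AS}};G)\cong\widetilde E^2_{p,q}(\widetilde X;G)$ for all $p>d$, so everything reduces to the case of a compact nonsingular $G$-$\mathcal{AS}$-set.

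\emph{The compact nonsingular case, and the main obstacle.} If $X$ is compact nonsingular, then for every $k$ the set $X_{(k)}=\pi_{k}\bigl(X\times V_{N}(\mathbb{R}^k)\bigr)$ is again compact and nonsingular: $X\times V_{N}(\mathbb{R}^k)$ is compact nonsingular, the diagonal action of $G$ on it is free (being free on the Stiefel factor), so by proposition \ref{localdiff} the quotient map is everywhere a local Nash diffeomorphism, and by corollary \ref{quotientas} the quotient is an $\mathcal{AS}$-set. Now the purity of compact nonsingular varieties (\cite{MCP}) says that the reindexed weight spectral sequence of such an $\mathcal{AS}$-set is concentrated on the line $p=0$; hence $\widetilde E^2_{p,q}(X_{(k)})=0$ for $p\ne 0$, and passing to the inductive limit gives $\widetilde E^2_{p,q}(X;G)=0$ for $p\ne 0$, a fortiori for $p>\dim X$. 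The delicate step in this whole argument is precisely the reduction above: one needs the equivariant resolution to produce an acyclic square whose non-smooth corners have \emph{strictly} smaller dimension, so that the induction hypothesis can be fed into the long exact sequences of lemma \ref{addacyclequivweightspecseq}. (A more computational alternative, avoiding resolution, would be to use a Künneth property of the Nash constructible filtration together with the purity of $V_N(\mathbb{R}^k)$ to see directly that $\mathrm{Gr}_p C_*\bigl(X\times V_N(\mathbb{R}^k)\bigr)$ is acyclic for $p>\dim X$, and then, since the $G$-action on $X\times V_N(\mathbb{R}^k)$ is free, that its complex of $G$-invariants --- which computes $\widetilde E^2_{p,q}(X_{(k)})$ --- is acyclic as well, before taking the limit in $k$.)
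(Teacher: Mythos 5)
Your proposal is correct and follows essentially the paper's own strategy: induction on dimension, purity of the compact nonsingular quotients $X_{(k)}$ (propositions \ref{quotientnonsingiffree}/\ref{localdiff}, corollary \ref{quotientas} and Theorem 3.7 of \cite{MCP}) passed to the direct limit, and the long exact sequences of lemma \ref{addacyclequivweightspecseq} to run the dévissage. The only divergence is organizational: you excise down to $\overline{X}^{\mathcal{AS}}$ and then use the acyclicity sequence attached to an equivariant resolution (of the Zariski closure, pulling back the $\mathcal{AS}$-closure), whereas the paper treats the nonsingular non-compact case via a nonsingular compactification and additivity alone before excising the singular locus --- both variants rest on the same ingredients, and your main argument does not need the parenthetical Künneth/invariants alternative, which is the only shaky part of the write-up.
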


\begin{proof} We use an induction on the dimension of $X$ (see also \cite{GF}, proof of Proposition 3.10 for instance).
\\

First, suppose that $X$ is a compact and nonsingular $\mathcal{AS}$-set, that is $X$ does not intersect the set of singular points of its Zariski closure (in $\mathbb{P}(\mathbb{R})$). Then, if $k \in \mathbb{N}$, the $\mathcal{AS}$-set $X_{(k)} = \pi_k(X \times V_{N}(\mathbb{R}^k))$ is also compact ($\pi_k$ is a continuous map) and nonsingular (proposition \ref{quotientnonsingiffree}). As a consequence, $X_{(k)}$ is a compact Nash submanifold of an affine space (see for instance Proposition 3.3.11 of \cite{BCR}) and its weight spectral sequence, for $r \geq 2$, is then concentrated in column $p = 0$ (Theorem 3.7 of \cite{MCP}), i.e. $E^r_{p,q}\left(X_{(k)}\right) = E^2_{p,q}\left(X_{(k)}\right) = 0$ if $p \neq 0$. Hence the same property for the direct limit : $E^r_{p,q}\left(X ; G \right) = E^2_{p,q}\left(X ; G \right)= 0$ if $p \neq 0$.

This case apply in particular when $X$ is zero-dimensional, that is when $X$ is a finite union of points.
\\

Now, suppose that $X$ is non-compact and nonsingular. There exists a compact and nonsingular $G$-$\mathcal{AS}$-set $\overline{X}$ such that $X$ can be equivariantly biregularly embedded in $\overline{X}$ and $\dim(\overline{X} \setminus X) < \dim X$. Indeed, consider the $\mathcal{AS}$-closure $\overline{X}^{\mathcal{AS}}$ of $X$ as well as the (projective) Zariski closure $\overline{X}^{\mathcal{Z}}$ of $X$, and consider an equivariant resolution of singularities $s$ of $\overline{X}^{\mathcal{Z}}$ (which exists by \cite{BM}). Since $X$ is away from the singularities of $\overline{X}^{\mathcal{Z}}$, $s^{-1}(X)$ is equivariantly biregularly isomorphic to $X$ and $s^{-1}\left(\overline{X}^{\mathcal{AS}}\right)$ is a compact ($s$ is proper) and nonsingular $G$-$\mathcal{AS}$-set (the inverse image of an $\mathcal{AS}$-set by a map with $\mathcal{AS}$-graph is an $\mathcal{AS}$-set) such that $\dim\left(s^{-1}\left(\overline{X}^{\mathcal{AS}}\right) \setminus s^{-1}(X)\right) < \dim X$. 
    
We then denote $Y := \overline{X} \setminus X$. The $\mathcal{AS}$-closure of $Y$ is a $G$-$\mathcal{AS}$-set as well and $\overline{Y}^{\mathcal{AS}} \cap X \subset X$ is an equivariant closed inclusion. On the other hand, $X \setminus \left(\overline{Y}^{\mathcal{AS}} \cap X\right) = \overline{X} \setminus \overline{Y}^{\mathcal{AS}} \subset \overline{X}$ is an equivariant open inclusion. Consider the long exact sequence of additivity of lemma \ref{addacyclequivweightspecseq} associated to the equivariant open inclusion $X \setminus \left(\overline{Y}^{\mathcal{AS}} \cap X\right) \subset \overline{X}$ :
$$ \cdots \rightarrow \widetilde{E}^2_{p,q} \left(\overline{Y}^{\mathcal{AS}} ; G\right) \rightarrow \widetilde{E}^2_{p,q} \left(\overline{X} ; G\right) \rightarrow \widetilde{E}^2_{p,q} \left(X \setminus \left(\overline{Y}^{\mathcal{AS}} \cap X\right) ; G\right) \rightarrow \widetilde{E}^2_{p-1,q} \left(\overline{Y}^{\mathcal{AS}} ; G\right) \rightarrow \cdots.$$
If $p \neq 0$, $\widetilde{E}^2_{p,q} \left(\overline{X} ; G\right) = 0$ by the previous case and, by the induction hypothesis, $\widetilde{E}^2_{p,q} \left(\overline{Y}^{\mathcal{AS}} ; G\right) = 0$ if $p \geq d$ (recall that $\dim \overline{Y}^{\mathcal{AS}} = \dim Y < \dim X$). Consequently, if $p > d$,  
$$\widetilde{E}^2_{p,q} \left(X \setminus \left(\overline{Y}^{\mathcal{AS}} \cap X\right) ; G\right) = 0.$$
Finally, consider the long exact sequence of additivity associated to the equivariant closed inclusion $\overline{Y}^{\mathcal{AS}} \cap X \subset X$ : 
$$ \cdots \rightarrow \widetilde{E}^2_{p,q} \left(\overline{Y}^{\mathcal{AS}} \cap X ; G\right) \rightarrow \widetilde{E}^2_{p,q} \left(X ; G\right) \rightarrow \widetilde{E}^2_{p,q} \left(X \setminus \left(\overline{Y}^{\mathcal{AS}} \cap X\right) ; G\right) \rightarrow \widetilde{E}^2_{p-1,q} \left(\overline{Y}^{\mathcal{AS}} \cap X ; G\right) \rightarrow \cdots$$
We use again the induction hypothesis ($\dim \overline{Y}^{\mathcal{AS}} \cap X < \dim X$) to deduce that $\widetilde{E}^2_{p,q} \left(X ; G\right) = 0$ if $p > d$.
\\

We conclude the proof with the general case : if $X$ is singular, the singular points of $\overline{X}^{\mathcal{Z}}$ included in $X$ form a closed $\mathcal{AS}$-subset of $X$ which is globally stabilized under the (biregular) action of $G$ and of dimension stricly smaller than $\dim X$ (see for instance Proposition 3.3.14 of \cite{BCR}). We can then use the previous cases along with the induction hypothesis to conclude. 
\end{proof}

As a consequence, the long exact sequences of additivity and acyclicity of lemma \ref{addacyclequivweightspecseq} are actually finite long exact sequences. This allows us to define the following invariants :

\subsection{The equivariant virtual Betti numbers}

\begin{theo} \label{equivvbn} Let $X$ be a $G$-$\mathcal{AS}$-set and let $q \in \mathbb{N}$. We denote
$$\beta_q(X ; G) := \sum_{p \in \mathbb{N}} (-1)^p \dim_{\mathbb{Z}_2} \widetilde{E}^2_{p,q} \left(X ; G\right)$$ 
the $q$\textsuperscript{th} equivariant virtual Betti number of $X$.
\\

The $q$\textsuperscript{th} equivariant virtual Betti number $\beta_q( \, \cdot \,; G)$ has values in $\mathbb{Z}$ and is 
\begin{enumerate}
	\item an invariant of $G$-$\mathcal{AS}$-sets with respect to equivariant homeomorphisms with $\mathcal{AS}$-graph, 
	\item additive with respect to equivariant closed inclusions of $G$-$\mathcal{AS}$-sets, i.e., if $Y \subset X$ is an equivariant closed inclusion, $\beta_q(X ; G) = \beta_q( Y  ; G) + \beta_q( X \setminus Y ; G)$,
	\item coincides with the dimension (over $\mathbb{Z}_2$) of the $q$\textsuperscript{th} equivariant homology group on compact nonsingular $G$-$\mathcal{AS}$-sets. 
\end{enumerate}

Moreover, the $q$\textsuperscript{th} equivariant virtual Betti number is unique with these properties.
\end{theo}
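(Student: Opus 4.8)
The plan is to follow closely the strategy used by McCrory–Parusiński for the non-equivariant virtual Betti numbers, using the three structural properties of the equivariant weight spectral sequence already established: functoriality (theorem \ref{functequivnashfil}), additivity (theorem \ref{addequivnashfil}), boundedness (theorem \ref{equivspecseqbound}). First I would check that $\beta_q(\,\cdot\,;G)$ is well-defined and $\mathbb{Z}$-valued: by theorem \ref{equivspecseqbound}, for $r \geq 2$ the (reindexed) term $\widetilde{E}^2_{p,q}(X;G)$ vanishes unless $0 \leq p \leq \dim X$, and each $\widetilde{E}^2_{p,q}(X;G)$ is a finite-dimensional $\mathbb{Z}_2$-vector space (it is a subquotient of $C_q(X;G)$, which is an inductive limit of finite-dimensional pieces, but more directly it is the limit of the finite-dimensional $\widetilde{E}^2_{p,q}(X_{(k)})$, eventually stationary by the same boundedness); hence the alternating sum is a finite sum of integers.

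Property (1), invariance under equivariant homeomorphisms with $\mathcal{AS}$-graph, is immediate: such a map induces, by theorem \ref{functequivnashfil} applied to it and its inverse, a filtered chain isomorphism $\mathcal{N}_\bullet C_*(X;G) \cong \mathcal{N}_\bullet C_*(X';G)$, hence an isomorphism of the induced spectral sequences from page $E^1$ on, in particular on $\widetilde{E}^2$, so the dimensions $\dim_{\mathbb{Z}_2}\widetilde{E}^2_{p,q}$ agree. Property (3): if $X$ is compact and nonsingular, then as shown inside the proof of theorem \ref{equivspecseqbound} each $X_{(k)}$ is a compact Nash submanifold, so its weight spectral sequence degenerates in columns $p \neq 0$ at $E^2$; passing to the direct limit, $\widetilde{E}^2_{p,q}(X;G) = 0$ for $p \neq 0$ and the spectral sequence degenerates, so $\widetilde{E}^2_{0,q}(X;G) \cong H_q(X;G) = H^G_q(X)$ (the last equality by lemma \ref{coincideequivhomsingclosed}); the alternating sum then reduces to $\dim_{\mathbb{Z}_2} H^G_q(X)$.

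For property (2), additivity, I would feed an equivariant closed inclusion $Y \subset X$ into the long exact sequence of lemma \ref{addacyclequivweightspecseq}(1) on the $q$th line of $\widetilde{E}^2$; by theorem \ref{equivspecseqbound} this is a \emph{finite} long exact sequence, so taking the alternating sum of dimensions along it yields $0$, which rearranges to $\beta_q(X;G) = \beta_q(Y;G) + \beta_q(X\setminus Y;G)$. Finally, uniqueness: given any other assignment $\beta'_q$ with properties (1)–(3), one argues by induction on $\dim X$. By equivariant resolution of singularities (available over $\mathbb{R}$ with a $G$-action, as used in the proof of theorem \ref{equivspecseqbound}), any $G$-$\mathcal{AS}$-set is built up, via equivariant closed inclusions and complements of lower dimension together with finitely many compact nonsingular pieces, from data on which $\beta_q$ and $\beta'_q$ are forced to agree; additivity propagates the agreement. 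Concretely: stratify $X$ equivariantly so that each stratum, after equivariant resolution of its closure, is covered by compact nonsingular $G$-$\mathcal{AS}$-sets; properties (2) and (3) then determine the value on each stratum from the compact nonsingular case and the inductive hypothesis on the lower-dimensional singular/complementary loci. I expect the main obstacle to be the uniqueness argument — specifically, making the inductive scheme genuinely equivariant: one must ensure that the resolution of singularities, the choice of a compactification with lower-dimensional boundary, and the stratification can all be taken $G$-equivariant, and that the complements appearing are again $G$-$\mathcal{AS}$-sets. All the needed ingredients (equivariant resolution via \cite{BM}, equivariant $\mathcal{AS}$-closures, the behaviour of $\mathcal{AS}$-sets under equivariant maps) are exactly those already assembled in the proof of theorem \ref{equivspecseqbound}, so the argument is a matter of organizing them into an induction rather than proving anything new.
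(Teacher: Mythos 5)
Your proposal is correct and follows essentially the same route as the paper: well-definedness and additivity from theorem \ref{equivspecseqbound} and the finite long exact sequence of lemma \ref{addacyclequivweightspecseq}, invariance from functoriality of $\mathcal{N}_{\bullet} C_*(\,\cdot\,;G)$, the compact nonsingular case from degeneration at $\widetilde{E}^2$ together with lemma \ref{coincideequivhomsingclosed}, and uniqueness by induction on dimension via equivariant compactification and resolution exactly as in the proof of theorem \ref{equivspecseqbound}. The paper's uniqueness step is just the concrete version of your sketch (using the closed inclusion $\overline{Y}^{\mathcal{AS}} \cap X \subset X$ and the open inclusion $X \setminus \overline{Y}^{\mathcal{AS}} \subset \overline{X}$), so no genuinely different ingredient is involved.
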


\begin{proof} First, notice that $\beta_q(X ; G)$ is well-defined since the sum over $p$ is finite by theorem \ref{equivspecseqbound}.

The $q$\textsuperscript{th} equivariant virtual Betti number is invariant with respect to equivariant homeomorphisms with $\mathcal{AS}$-graph because so is the equivariant Nash constructible filtration and the induced equivariant weight spectral sequence. It is additive because of the long exact sequence of additivity of lemma \ref{addacyclequivweightspecseq}, which is finite for a given equivariant closed inclusion by theorem \ref{equivspecseqbound}.

If $X$ is compact nonsingular, the equivariant weight spectral sequence of $X$ converges at $\widetilde{E}^2 \left(X ; G\right)$ and is concentrated in the column $p = 0$ (see the proof of theorem \ref{equivspecseqbound}). Since the equivariant weight spectral sequence of $X$ converges to the equivariant homology of $X$ with closed supports, we have
$$\beta_q(X ; G) = \dim_{\mathbb{Z}_2} \widetilde{E}^2_{0,q} = \dim_{\mathbb{Z}_2} H_q(X ; G) = \dim_{\mathbb{Z}_2} H^G_q(X)$$ 
($X$ is compact : see lemma \ref{coincideequivhomsingclosed}). 
\\

We finally show the uniqueness of the $q$\textsuperscript{th} equivariant virtual Betti number. Consider an application $B_q( \, \cdot \, ; G)$ with the same above properties 1), 2) and 3) as $\beta_q( \, \cdot \,; G)$. We prove that $B_q(X ; G) = \beta_q(X ; G)$ for any $G$-$\mathcal{AS}$-set $X$, proceeding by induction on the dimension.

Just as in the proof of theorem \ref{equivspecseqbound}, suppose first that $X$ is compact and nonsingular. Then
$$B_q(X ; G) = \dim_{\mathbb{Z}_2} H_q(X ; G) = \beta_q(X ; G).$$

Secondly, suppose that $X$ is nonsingular and non-compact and consider an equivariant nonsingular compactification $\overline{X}$ of $X$ such that $\dim(\overline{X} \setminus X) < \dim X$. Denote $Y := \overline{X} \setminus X$. Then $\overline{Y}^{\mathcal{AS}} \cap X \subset X$ is an equivariant closed inclusion, so
$$B_q(X ; G) = B_q\left(\overline{Y}^{\mathcal{AS}} \cap X ; G\right) + B_q\left( X \setminus \overline{Y}^{\mathcal{AS}} ; G \right),$$
and $X \setminus \overline{Y}^{\mathcal{AS}} = \overline{X} \setminus \overline{Y}^{\mathcal{AS}} \subset \overline{X}$ is an equivariant open inclusion, so that
$$B_q\left( X \setminus \overline{Y}^{\mathcal{AS}} ; G \right) = B_q\left( \overline{X} ; G\right) - B_q\left(\overline{Y}^{\mathcal{AS}}; G \right).$$ 
As a consequence, $B_q(X ; G) = \beta_q(X ; G)$ thanks to the induction hypothesis and the previous case ($\overline{X}$ is compact nonsingular).

The final step consists in considering the closed subset of singular points of $X$ and to use again the additivity of $B_q(\, \cdot \, ;  G)$ and $\beta_q(\, \cdot \, ; G)$, the induction hypothesis and the previous cases.
\end{proof}

\begin{rem} 
\begin{itemize}
	\item In the proof of uniqueness, we used the invariance of $\beta_q(\, \cdot \,; G)$ with respect to equivariant biregular isomorphisms in order to replace $X$ by a subset of a compact nonsingular $G$-$\mathcal{AS}$ set with complement of strictly smaller dimension. Actually, we showed that any additive invariant of $G$-$\mathcal{AS}$-sets with respect to equivariant biregular isomorphisms, with values in $\mathbb{Z}$ and coinciding with the dimension of the $q$\textsuperscript{th} equivariant homology group on compact nonsingular $G$-$\mathcal{AS}$-sets, coincides with $\beta_q(\, \cdot \,; G)$ and is, in particular, invariant with respect to equivariant homeomorphisms with $\mathcal{AS}$-graph.

\item If $X$ is a real algebraic set equipped with an action of $G$ via biregular isomorphisms, to compute $\beta_q(X ; G)$, we can first consider an equivariant open compactification $\overline{X}$ of $X$ and then an equivariant resolution of singularities of $\overline{X}$. We can construct an equivariant open compactification of $X$ using a trick similar to lemma \ref{linearisationfinigroup} : consider an (a priori non-equivariant) open compactification $i : X \hookrightarrow \widetilde{X}$ of $X$ and construct the morphism
$$j : \begin{array}{ccc} 
		X & \rightarrow &  \widetilde{X} \times \cdots \times  \widetilde{X} \\
		x & \mapsto & \left(i(x), i\left(\alpha_{g_2^{-1}}(x)\right), \ldots, i\left(\alpha_{g_N^{-1}}(x)\right)\right)
		\end{array}$$
(if $G = \{g_1, \ldots, g_N\}$). The Zariski closure of $j(X)$ in $\widetilde{X} \times \cdots \times  \widetilde{X}$ is then an equivariant open compactification of $X$ (the action is given by the permutations induced by the product in $G$).
\end{itemize}
\end{rem}

Actually, we can relax the ``closed inclusion'' hypothesis :

\begin{cor} If $Y \subset X$ is any inclusion of $G$-$\mathcal{AS}$-sets, then
$$\beta_q(X ; G) = \beta_q(Y ; G) + \beta_q(X \setminus Y ; G)$$
for all $q \in \mathbb{N}$.
\end{cor}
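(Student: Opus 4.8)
The plan is to reduce the general case to the already established additivity with respect to \emph{equivariant closed} inclusions (Theorem~\ref{equivvbn}(2)), by an induction on $\dim Y$. The base case $\dim Y < 0$, i.e. $Y = \emptyset$, is immediate since $\beta_q(\emptyset ; G) = 0$. So I fix $n \geq 0$, assume the additivity formula for every equivariant inclusion of $G$-$\mathcal{AS}$-sets $B \subset A$ with $\dim B < n$, and take an arbitrary equivariant inclusion $Y \subset X$ with $\dim Y = n$.

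The key device is to approximate $Y$ by the closed set $Z := \overline{Y}^{\mathcal{AS}} \cap X$. I would first check that $Z$ is a $G$-$\mathcal{AS}$-subset of $X$ which is closed in $X$: the arc-symmetric closure $\overline{Y}^{\mathcal{AS}}$ is compact, hence closed, it is globally $G$-stable because $G$ acts by biregular isomorphisms and $g\left(\overline{Y}^{\mathcal{AS}}\right) = \overline{g(Y)}^{\mathcal{AS}} = \overline{Y}^{\mathcal{AS}}$, and $X$ is $G$-stable; being an intersection of $\mathcal{AS}$-sets, $Z$ is $\mathcal{AS}$. Moreover $Y \subset Z$ and, by Proposition 3.3 of \cite{KP}, $\dim(Z \setminus Y) \leq \dim\left(\overline{Y}^{\mathcal{AS}} \setminus Y\right) < \dim Y = n$, so the induction hypothesis applies to any inclusion having $Z \setminus Y$ as its distinguished subset.

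Then I would apply Theorem~\ref{equivvbn}(2) to the equivariant closed inclusion $Z \subset X$ to obtain $\beta_q(X ; G) = \beta_q(Z ; G) + \beta_q(X \setminus Z ; G)$, and the induction hypothesis to the two inclusions $Z \setminus Y \subset Z$ and $Z \setminus Y \subset X \setminus Y$, which give respectively $\beta_q(Z ; G) = \beta_q(Z\setminus Y ; G) + \beta_q(Y ; G)$ (using $Z \setminus (Z\setminus Y) = Y$) and $\beta_q(X \setminus Y ; G) = \beta_q(Z \setminus Y ; G) + \beta_q(X \setminus Z ; G)$ (using $(X\setminus Y)\setminus(Z\setminus Y) = X\setminus Z$, since $Y \subset Z$). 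Combining the three identities yields $\beta_q(Y;G) + \beta_q(X\setminus Y ; G) = \beta_q(X ; G)$, completing the induction.

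The argument is essentially formal once the induction is organised this way; the point that requires a little care is that $Z \setminus Y$ is in general neither open nor closed in $Z$, so one genuinely must invoke the general formula recursively rather than a closed-inclusion instance — this is legitimate because the recursive calls involve a subset of strictly smaller dimension, so the induction terminates. The only routine verification is that all the sets appearing ($Z$, $Z\setminus Y$, $X\setminus Y$, $X\setminus Z$) are again $G$-$\mathcal{AS}$-sets, which holds since the class of $\mathcal{AS}$-sets is stable under Boolean operations and each of these sets is a globally $G$-stable subset of the ambient space carrying the $G$-action.
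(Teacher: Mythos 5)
Your proof is correct, but it is organised differently from the paper's. The paper first establishes, by a separate induction on dimension, the auxiliary identity $\beta_q(T ; G) = \beta_q\left(\overline{T}^{\mathcal{AS}} ; G\right) - \beta_q\left(\overline{T}^{\mathcal{AS}} \setminus T ; G\right)$, and then runs a second induction on the dimension of the ambient set, split into the cases $X$ compact (where $\overline{Y}^{\mathcal{AS}} \subset X$ is automatically an equivariant closed inclusion) and $X$ non-compact (where one works with $X_0 := \overline{X}^{\mathcal{AS}} \setminus X$ and its $\mathcal{AS}$-closure, a noticeably longer computation). You instead induct on $\dim Y$ alone and use the single device $Z := \overline{Y}^{\mathcal{AS}} \cap X$, a $G$-stable $\mathcal{AS}$-subset closed in $X$, containing $Y$, with $\dim(Z \setminus Y) < \dim Y$ by Proposition 3.3 of \cite{KP}; one application of the closed-inclusion additivity of theorem \ref{equivvbn} to $Z \subset X$, plus two recursive calls on the lower-dimensional set $Z \setminus Y$ (inside $Z$ and inside $X \setminus Y$), give the formula, and the set-theoretic bookkeeping $Z \setminus (Z \setminus Y) = Y$ and $(X \setminus Y) \setminus (Z \setminus Y) = X \setminus Z$ is exactly as you state. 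Your route avoids both the auxiliary identity and the compact/non-compact dichotomy, and the intersection trick $\overline{Y}^{\mathcal{AS}} \cap X$ is the same one the paper already exploits in the proofs of theorems \ref{equivspecseqbound} and \ref{equivvbn}, so everything you invoke is available; what the paper's longer route buys is that the identity relating $\beta_q(T ; G)$ to its $\mathcal{AS}$-closure is stated explicitly along the way, which has independent interest. The points that needed checking in your argument --- that $Z$, $Z \setminus Y$, $X \setminus Y$, $X \setminus Z$ are again $G$-$\mathcal{AS}$-sets, that $Z$ is closed in $X$, and that the induction on $\dim Y$ is well-founded with base case $Y = \emptyset$ --- are all handled correctly.
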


\begin{proof} Let $q \in \mathbb{N}$. First, let us prove that, if $T$ be a $G$-$\mathcal{AS}$-set,
$$\beta_q(T ; G) = \beta_q\left(\overline{T}^{\mathcal{AS}}\right) - \beta_q\left(\overline{T}^{\mathcal{AS}} \setminus T ; G\right).$$
We proceed by induction on the dimension : the property is true for zero-dimensional $\mathcal{AS}$-sets, and suppose it to be true for $G$-$\mathcal{AS}$-sets of dimension $\leq d-1$. Let $T$ be a $d$-dimensional $G$-$\mathcal{AS}$-set and denote $S :=  \overline{T}^{\mathcal{AS}} \setminus T$ (we have $\dim S < \dim T$ : see \cite{KP} Proposition 3.3). We have
$$\beta_q(T ; G) = \beta_q\left(\overline{T}^{\mathcal{AS}} ; G\right) + \beta_q\left(\overline{S}^{\mathcal{AS}} \cap T ; G\right) - \beta_q\left(\overline{S}^{\mathcal{AS}} ; G\right)$$
(see the proof of theorem \ref{equivvbn}). But $\overline{S}^{\mathcal{AS}} \cap T \subset \overline{S}^{\mathcal{AS}}$ is an inclusion of $G$-$\mathcal{AS}$-set of dimension $< d$ so
$$\beta_q\left(\overline{S}^{\mathcal{AS}} ; G\right) - \beta_q\left(\overline{S}^{\mathcal{AS}} \cap T ; G\right)  = \beta_q\left(\overline{S}^{\mathcal{AS}} \setminus T; G\right) = \beta_q\left(\overline{T}^{\mathcal{AS}} \setminus T\right).$$

\vspace{0.5cm}

We will now show that
$$\beta_q(X ; G) = \beta_q(Y ; G) + \beta_q(X \setminus Y ; G)$$
for any inclusion $Y \subset X$ of $G$-$\mathcal{AS}$-sets, proceeding once again by induction on the dimension (the property is obviously true for zero-dimensional $\mathcal{AS}$-sets) : suppose the above equality to be true for any $G$-$\mathcal{AS}$-sets of dimension $\leq d-1$ and consider an inclusion $Y \subset X$ of $G$-$\mathcal{AS}$-sets of dimension $\leq d$.

If $X$ is compact, $\overline{Y}^{\mathcal{AS}} \subset X$ and $\overline{Y}^{\mathcal{AS}} \setminus Y \subset X \setminus Y$ are equivariant closed inclusions so that 
\begin{eqnarray*}
\beta_q(X \setminus Y ; G) & = & \beta_q\left(\overline{Y}^{\mathcal{AS}} \setminus Y ; G\right) + \beta_q\left(X \setminus \overline{Y}^{\mathcal{AS}} ; G\right) \\
					& = & \beta_q\left(\overline{Y}^{\mathcal{AS}} ; G\right) - \beta_q(Y ; G) + \beta_q(X ; G) - \beta_q\left( \overline{Y}^{\mathcal{AS}} ; G\right) \\
					& = & \beta_q(X ; G) - \beta_q(Y ; G)    
\end{eqnarray*}

If $X$ is not compact, denote $X_0 := \overline{X}^{\mathcal{AS}} \setminus X$ and consider the equivariant closed inclusion $\overline{X_0}^{\mathcal{AS}} \cap ( X \setminus Y) \subset X \setminus Y$. We have 
$$\beta_q(X \setminus Y ; G) = \beta_q\left(\overline{X_0}^{\mathcal{AS}} \cap (X \setminus Y) ; G\right) + \beta_q\left( (X \setminus Y) \setminus \overline{X_0}^{\mathcal{AS}} ; G \right).$$
Since $(X \setminus Y) \setminus \overline{X_0}^{\mathcal{AS}} = \left(\overline{X}^{\mathcal{AS}} \setminus Y\right) \setminus \overline{X_0}^{\mathcal{AS}} \subset \overline{X}^{\mathcal{AS}} \setminus Y$ is an equivariant open inclusion, we also have
$$\beta_q\left( (X \setminus Y) \setminus \overline{X_0}^{\mathcal{AS}} ; G \right) = \beta_q\left(\overline{X}^{\mathcal{AS}} \setminus Y ; G \right) - \beta_q\left( \overline{X_0}^{\mathcal{AS}} \setminus Y ; G\right).$$
Finally, $\overline{X_0}^{\mathcal{AS}} \cap (X \setminus Y)  \subset \overline{X_0}^{\mathcal{AS}} \setminus Y$ is an equivariant inclusion in dimension $<d$ so
$$\beta_q\left( \overline{X_0}^{\mathcal{AS}} \setminus Y ; G\right) - \beta_q\left(\overline{X_0}^{\mathcal{AS}} \cap (X \setminus Y) ; G\right) = \beta_q\left(\overline{X}^{\mathcal{AS}} \setminus X\right)$$
and
\begin{eqnarray*}
\beta_q(X \setminus Y ; G) & = & \beta_q\left(\overline{X}^{\mathcal{AS}} \setminus Y ; G \right) -  \beta_q\left(\overline{X}^{\mathcal{AS}} \setminus X ; G\right) \\
					 & = & \beta_q\left(\overline{X}^{\mathcal{AS}} ; G \right) - \beta_q(Y ; G) - \left(\beta_q\left(\overline{X}^{\mathcal{AS}} ; G \right) - \beta_q(X ; G) \right) \\
					 & = & \beta_q(X ; G) - \beta_q(Y ; G).
\end{eqnarray*}
\end{proof}

Let us then give the following definition :

\begin{de} Let $e$ be an application from the category of $G$-$\mathcal{AS}$-sets and equivariant continuous maps with $\mathcal{AS}$-graph to a ring $A$. We say that $e$ is an additive invariant of $G$-$\mathcal{AS}$-sets if
\begin{itemize}
	\item whenever $f : X  \rightarrow X'$ is an equivariant homeomorphism with $\mathcal{AS}$-graph, then
	$$e(X) = e(X'),$$
	\item for any equivariant inclusion $Y \subset X$, we have
	$$e(X) = e(Y) + e(X \setminus Y).$$ 
\end{itemize}
\end{de}

The equivariant virtual Betti numbers are additive invariants of $G$-$\mathcal{AS}$-sets. We define another one from them :

\begin{de} \label{equivvps} Let $X$ a $G$-$\mathcal{AS}$-set. We set
$$\beta(X ; G) := \sum_{q \in \mathbb{N}} \beta_q(X ; G) u^q \in \mathbb{Z}[[u]].$$
and we call this formal power series the equivariant virtual Poincar\'e series of $X$.

The equivariant virtual Poincar\'e series is an additive invariant of $G$-$\mathcal{AS}$-sets, which coincides on compact nonsingular $G$-$\mathcal{AS}$-sets with the generating function of the dimensions of the equivariant homology groups (we denote this generating function by $b(\, \cdot \, ; G)$).
\end{de}

\begin{rem} \begin{itemize}
	\item The equivariant virtual Betti numbers and the equivariant virtual Poincar\'e series defined above are different from the ones in \cite{GF} : they are not induced by the same equivariant homology. In particular, the above equivariant virtual Poincar\'e series of definition \ref{equivvps} does not encode the dimension (see below example \ref{exequivpoincser} (2)).
	\item Let $q \in \mathbb{N}$. The application which associates to a $G$-$\mathcal{AS}$-set $X$ the $q$\textsuperscript{th} virtual Betti number (see \cite{MCP-VB}, \cite{GF-MI} and also \cite{MCP}) of its fixed points set $\beta_q(X^G)$ is an additive invariant of $G$-$\mathcal{AS}$-set with values in $\mathbb{Z}$. 
	
	The virtual Poincar\'e polynomial of the fixed points set is also an additive invariant of $G$-$\mathcal{AS}$-set, with values in $\mathbb{Z}[u]$.

	\item If $G = \{e\}$, $\beta( \, \cdot \, ; G)$ is the virtual Poincar\'e polynomial of \cite{GF-MI}, since $H_*(X ; G) = H_*(X)$ if $X$ is a compact (nonsingular) $\mathcal{AS}$-set (see remark \ref{remequivsinghom} (5) and lemma \ref{coincideequivhomsingclosed}).
\end{itemize}
\end{rem}

\begin{ex} \label{exequivpoincser} \begin{enumerate}
	\item Consider the $d$-dimensional affine space $\mathbb{A} := \mathbb{R}^d$ equipped with any orthogonal action of a finite group $G$. In order to compute the equivariant virtual Poincar\'e series of $\mathbb{A}$, we consider the radial projection of $\mathbb{A}$ into the $d$-dimensional sphere $\mathbb{S}$ with center $(0, \ldots, 0, \frac{1}{2})$ and radius $\frac{1}{2}$ of $\mathbb{R}^{d+1}$ : see for instance the proof of Proposition 3.5.12 of \cite{BCR}. 
	
	If we naturally extend the orthogonal action of $G$ into an orthogonal action on $\mathbb{R}^{d+1}$ (take the diagonal action fixing the last coordinate), this (bi)regular embedding is equivariant (because the action preserves the euclidean norm). Denote by $p$ the point $(0, \ldots, 0, 1)$ of $\mathbb{R}^{d+1}$. We then have
$$\beta(\mathbb{A} ; G) = \beta(\mathbb{S} ; G) - \beta(p ; G) =  b(\mathbb{S} ; G) - b(p ; G),$$   
since $\mathbb{S}$ and $p$ are compact and nonsingular.

We have $b(p ; G) =  \sum_{q \in \mathbb{N}} H_q(G, \mathbb{Z}_2) u^q$ (remark \ref{remequivsinghom} (5)), and consider the $G$-$CW$-structure on $\mathbb{S}$ consisting in the $G$-invariant $d$-cell $\mathbb{A}$ and the $G$-invariant $0$-cell $p$. Since all the cells are globally invariant under $G$, the action on the cellular complex $C^{cell}_*(\mathbb{S})$ is trivial and $$H_*(\mathbb{S} ; G) = H_*(G,C^{cell}_*(\mathbb{S})) = H_*(\mathbb{S}) \otimes_{\mathbb{Z}_2} H_*(G , \mathbb{Z}_2)$$ 
(see \cite{Brown} VII-5 (5.4)), so that
$$b(\mathbb{S} ; G) = b(\mathbb{S}) \left( \sum_{q \in \mathbb{N}} H_q(G, \mathbb{Z}_2) u^q \right) = (1+ u^d) \left( \sum_{q \in \mathbb{N}} H_q(G, \mathbb{Z}_2) u^q \right)$$  
($b(\, \cdot \,)$ denotes the Poincar\'e polynomial). Finally
$$\beta(\mathbb{A} ; G) = u^d \left( \sum_{q \in \mathbb{N}} H_q(G, \mathbb{Z}_2) u^q \right) = u^d b(p ; G).$$

	\item We consider two different actions of $G := \mathbb{Z}/2 \mathbb{Z}$ on the hyperbola $X := \{x y = 1\}$ of $\mathbb{R}^2$. 
	
	First, consider the action given by the involution $\sigma : (x,y) \mapsto (-x, -y)$. Consider the projective Zariski closure $\overline{X} := \{XY = Z^2\}$ of $X$ in $\mathbb{P}^2(\mathbb{R})$, equipped with the involution $\overline{\sigma} : (X : Y : Z) \mapsto (-X : -Y :Z) = (X : Y : -Z)$. Then $X \subset \overline{X}$ is an equivariant inclusion (compactification) of $G$-$\mathcal{AS}$-sets and $\overline{X} \setminus X = \{p,q \}$, where $p$ and $q$ are the points of respective homogeneous coordinates $[1 : 0 : 0]$ and $[0 : 1 : 0]$. Notice that $p$ and $q$ are fixed by the action of $G$ on $\overline{X}$ and that $\overline{X}$ is a nonsingular compact $G$-$\mathcal{AS}$-set equivariantly homeomorphic to a circle equipped with a continuous action of $G$ with two fixed points. As a consequence,
$$\beta(X ; G) = \beta(\overline{X} ; G) - \beta(\{p,q\} ; G) = (1+ u) b(p ; G) - 2 b(p ; G) = (u-1) \sum_{q \in \mathbb{N}} u^q = -1$$  
($H_k(G , \mathbb{Z}_2) = \mathbb{Z}_2$ if $k \geq 0$ for $G = \mathbb{Z}/2 \mathbb{Z}$ : see for instance \cite{GF}  Example 2.1).	
	
	If now we consider the action of $G$ given by the involution $\sigma : (x,y) \mapsto (y, x)$ (notice that the points of coordinates $(1,1)$ and $(-1,-1)$ are fixed by the $\sigma$), we equip $\overline{X}$ with the involution $\overline{\sigma} : (X : Y : Z) \mapsto (Y : X :Z)$, for which the two points $p$ and $q$ are exchanged. Consequently, $\beta(\{p,q\} ; G) = b(\{p,q\}/G) = b(p)=1$ and
$$\beta(X ; G) = \beta(\overline{X} ; G) - \beta(\{p,q\} ; G) = 	(1+u )\sum_{q \in \mathbb{N}} u^q - 1 = \sum_{q \geq 1} 2 u^q.$$
In particular, we can see through this example that the equivariant virtual Poincar\'e series does not encode dimension.

\item Let $k$ and $l$ be odd integers and let $X$ be the real algebraic set $\{y^{2l} = x^{2 k}(1- x^{2k})$ of $\mathbb{R}^2$. We will consider the actions of $G := \mathbb{Z}/2 \mathbb{Z}$ on $X$ given by the involutions $\sigma_1 : (x , y) \mapsto (-x,y)$, $\sigma_2 : (x,y) \mapsto (x,-y)$ and $\sigma_3 : (x,y) \mapsto (-x,-y)$. First, notice that the map $(x,y) \mapsto (x^k , y^l)$ induces an equivariant homeomorphism with $\mathcal{AS}$-graph (actually algebraic graph) between the algebraic set $X' := \{y^{2} = x^{2}(1- x^{2})$ of $\mathbb{R}^2$ and $X$, so that
$$\beta(X ; G) = \beta(\widetilde{X} ; G).$$

As in \cite{GF} Example 4.6, we consider an equivariant resolution of $X'$ to compute the equivariant virtual Poincar\'e series of $X$. With the values of our equivariant virtual Poincar\'e series on the circle and points, we obtain
$$\beta(X ; G) = \begin{cases}
				1 + 2 u + \sum_{q \geq 2} u^q & \mbox{ if $G = \{id, \sigma_1\}$,} \\
				1 + \sum_{q \geq 1} 2 u^q & \mbox{ if $G = \{id, \sigma_2\}$,} \\
				\sum_{q \geq 1} u^q & \mbox{ if $G = \{id, \sigma_3\}$.}
			\end{cases}$$
In particular, we see that the actions of the involutions $\sigma_1$ and $\sigma_3$ are different up to equivariant homeomorphism with $\mathcal{AS}$-graph, which could not be proven using the equivariant virtual Poincar\'e series of $\cite{GF}$ or the virtual Poincar\'e polynomial of the fixed points set.
\end{enumerate}
\end{ex}

\subsection{The dual equivariant Nash constructible filtration}

We deal with the cohomological counterpart of the previous paragraphs, adapting the construction of the dual geometric filtration of \cite{LP} section 4 to our equivariant $\mathcal{AS}$ context. First, remark that $C^q(X ; G) = \Hom_{\mathbb{Z}_2} \left(C_q(X ; G) , \mathbb{Z}_2 \right)$, since
\begin{equation} \label{limhomcom}
\varprojlim_{k \in \mathbb{N}} \Hom_{\mathbb{Z}_2} \left(M_k, \mathbb{Z}_2 \right) = \Hom_{\mathbb{Z}_2} \left( \varinjlim_{k \in \mathbb{N}}\ M_k, \mathbb{Z}_2 \right)
\end{equation}
for any inductive system of $\mathbb{Z}_2$-vector spaces $M_k$, $k \in \mathbb{N}$.

\begin{de} \label{defdualequivnashfil} Let $X$ be a $G$-$\mathcal{AS}$-set. For $p \in \mathbb{Z}$, $q \in \mathbb{N}$, we set
$$\mathcal{N}^p C^q(X ; G) := \left\{\varphi \in C^q(X ; G)~|~\varphi \equiv 0 \mbox{ on } \mathcal{N}_{p-1} C_q(X ; G)\right\}.$$
This fits into a decreasing filtration $\mathcal{N}^{\bullet}$ on $C^*(X ; G)$ :
$$0 = \mathcal{N}^1 C^q(X ; G) \subset \mathcal{N}^0 C^q(X; G) \subset \cdots \subset \mathcal{N}^{-q+1} C^q(X ; G) \subset \mathcal{N}^{-q} C^q(X ; G),$$
that we call the dual equivariant Nash constructible filtration of $X$.
\end{de}

Since the filtration $\mathcal{N}^{\bullet}$ is bounded, it induces a spectral sequence $E^{*,*}_*(X ; G)$ which converges (\cite{McCleary} Theorem 2.6) to the cohomology of $C^*(X; G)$, that is the equivariant cohomology with closed supports $H^*(X ; G)$ of $X$. The induced filtration 
$$0 = \mathcal{N}^1 H^q(X ; G) \subset \mathcal{N}^0 H^q(X; G) \subset \cdots \subset \mathcal{N}^{-q+1} H^q(X ; G) \subset \mathcal{N}^{-q} H^q(X ; G)$$
on $H^*(X ; G)$ is called the cohomological equivariant weight filtration of $X$.

Just as in \cite{LP} section 4, we have natural isomorphisms 
\begin{equation} \label{naturalisodual}
\mathcal{N}^p C^q(X ; G) = \left(\frac{C_q(X;G)}{\mathcal{N}_{p-1} C_q(X ; G)}\right)^{\vee}
\end{equation}
(where ${\, \cdot \,}^{\vee}$ denotes the duality functor $\Hom_{\mathbb{Z}_2}( \, \cdot \, , \mathbb{Z}_2)$, which is exact), so that the cohomological equivariant weight spectral sequence is naturally dual to the (homological) equivariant weight spectral sequence :
$$E^{p,q}_r(X ; G) = \left( E_{p,q}^r(X ; G)\right)^{\vee}$$
for all $r \geq 0$, $p,q \in \mathbb{Z}$.

Moreover, the filtered cochain complex $\mathcal{N}^{\bullet} C^*\left(X ; G\right)$ is the inverse limit of the projective system $\mathcal{N}^{\bullet} C^*\left(X_{(k+1)}\right) \rightarrow \mathcal{N}^{\bullet} C^*\left(X_{(k)}\right)$ and the cohomological equivariant weight spectral sequence is the inverse limit of the induced cohomological weight spectral sequences (use the above natural isomorphisms (\ref{naturalisodual}), (\ref{limhomcom}) and the exactness of the direct limit and duality functors).
\\

\begin{rem} The above cohomological equivariant weight filtration and spectral sequence are different from the ones of \cite{Pri-CPEWF} section 3. \\
\end{rem}

As in \cite{LP} Lemma 4.2, the additivity and acyclicity short exact sequences of theorem \ref{addequivnashfil} and corollary \ref{acyequivnashfil} induces short exact sequences of additivity and acyclicity for the dual equivariant Nash constructible filtration. We deduce finite long exact sequences of additivity (and acyclicity) on the lines of the second page of the reindexed (take the same reindexation as in subsection \ref{subsechomequivweightspecseq}) cohomological equivariant weight spectral sequence. We can therefore recover the equivariant virtual Betti numbers (theorem \ref{equivvbn}) from the cohomological equivariant weight spectral sequence as well :

\begin{prop}
Let $X$ be a $G$-$\mathcal{AS}$-set and let $q \in \mathbb{N}$. We have
$$\beta_q(X ; G) = \sum_{p \in \mathbb{N}} (-1)^p \dim_{\mathbb{Z}_2} \widetilde{E}_2^{p,q} \left(X ; G\right).$$ 
\end{prop}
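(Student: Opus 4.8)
The plan is to exploit the duality between the cohomological equivariant weight spectral sequence and the homological one that has just been established, namely the natural isomorphism $E^{p,q}_r(X ; G) = \left( E_{p,q}^r(X ; G)\right)^{\vee}$, together with the fact that $\,\cdot\,^{\vee} = \Hom_{\mathbb{Z}_2}(\,\cdot\,,\mathbb{Z}_2)$ is an exact functor on $\mathbb{Z}_2$-vector spaces. The reindexation used in subsection \ref{subsechomequivweightspecseq} is applied identically on both sides (it only renames the indices $(r,p,q)$), so the reindexed pages also satisfy $\widetilde{E}_r^{p,q}(X ; G) = \left(\widetilde{E}^r_{p,q}(X ; G)\right)^{\vee}$.

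First I would recall that for a $\mathbb{Z}_2$-vector space $V$ one has $\dim_{\mathbb{Z}_2} V^{\vee} = \dim_{\mathbb{Z}_2} V$ whenever $V$ is finite-dimensional, and observe that each $\widetilde{E}^2_{p,q}(X;G)$ is finite-dimensional: it is a subquotient of $H_*(X_{(k)})$ for $k$ large (using $\widetilde{E}^2_{p,q}(X;G) = \varinjlim_k \widetilde{E}^2_{p,q}(X_{(k)})$ and the fact that, by theorem \ref{equivspecseqbound}, this inductive system stabilizes and each term is finite-dimensional since the $X_{(k)}$ are $\mathcal{AS}$-sets). Hence $\dim_{\mathbb{Z}_2} \widetilde{E}_2^{p,q}(X ; G) = \dim_{\mathbb{Z}_2} \widetilde{E}^2_{p,q}(X ; G)$ for every $p,q$. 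Next I would invoke theorem \ref{equivspecseqbound} once more to guarantee that the sum $\sum_{p \in \mathbb{N}} (-1)^p \dim_{\mathbb{Z}_2} \widetilde{E}_2^{p,q}(X ; G)$ is finite (it has the same finite support $0 \le p \le \dim X$ as the homological one). Then the computation is immediate:
$$\sum_{p \in \mathbb{N}} (-1)^p \dim_{\mathbb{Z}_2} \widetilde{E}_2^{p,q} \left(X ; G\right) = \sum_{p \in \mathbb{N}} (-1)^p \dim_{\mathbb{Z}_2} \widetilde{E}^2_{p,q} \left(X ; G\right) = \beta_q(X ; G),$$
the last equality being the definition in theorem \ref{equivvbn}.

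There is essentially no hard obstacle here; the statement is a formal consequence of the duality $E^{p,q}_r = (E_{p,q}^r)^{\vee}$ already recorded before the proposition. The only point requiring a word of care is the finite-dimensionality of the $\widetilde{E}^2$-terms, which is needed both to equate dimensions with those of the duals and to ensure the alternating sums converge; this is handled by the boundedness theorem \ref{equivspecseqbound} and the description of $\widetilde{E}^2_{p,q}(X;G)$ as a direct limit of the (finite-dimensional, eventually constant) weight spectral sequence terms of the compact or finite-type $\mathcal{AS}$-sets $X_{(k)}$. I would write the proof in three short sentences reflecting exactly the three steps above.
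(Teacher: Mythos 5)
Your proposal is correct, but it takes a genuinely different route from the paper's. You deduce the formula directly from the duality $\widetilde{E}_2^{p,q}(X;G)=\bigl(\widetilde{E}^2_{p,q}(X;G)\bigr)^{\vee}$ recorded just before the proposition (a consequence of the natural isomorphisms (\ref{naturalisodual}) and the exactness of $\Hom_{\mathbb{Z}_2}(\,\cdot\,,\mathbb{Z}_2)$, with the same reindexation on both sides), so that the two alternating sums agree term by term; the paper instead checks that the right-hand side is bounded in $p$, additive, an invariant under equivariant homeomorphisms with $\mathcal{AS}$-graph, and equal to $\dim_{\mathbb{Z}_2}H^G_q$ on compact nonsingular $G$-$\mathcal{AS}$-sets, and then concludes by the uniqueness statement of theorem \ref{equivvbn}. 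Your argument is shorter and purely formal, while the paper's argument has the side benefit of exhibiting the cohomological expression as an additive invariant in its own right, in the spirit of the rest of the section; both are legitimate. Two small inaccuracies in your justification of finite dimensionality, neither of them fatal: theorem \ref{equivspecseqbound} only bounds the columns ($0\le p\le\dim X$) and says nothing about stabilization of the inductive system in $k$, and an $\widetilde{E}^2$-term is not a subquotient of the homology of $X_{(k)}$ (only the $\widetilde{E}^{\infty}$-terms are subquotients of the abutment). The finite dimensionality of the spaces $\widetilde{E}^2_{p,q}(X;G)$ that you need is exactly what is already implicit in theorem \ref{equivvbn} for $\beta_q(\,\cdot\,;G)$ to take values in $\mathbb{Z}$, so it is cleaner to quote that (together with theorem \ref{equivspecseqbound} for the finiteness of the sum over $p$) than to re-derive it by the stabilization argument you sketch.
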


\begin{proof} Each line of $\widetilde{E}_2 \left(X ; G\right)$ is bounded because it is dual to its homological counterpart and because of theorem \ref{equivspecseqbound}. Furthermore, thanks to the long exact sequence of additivity on each line of $\widetilde{E}_2^{p,q} \left(X ; G\right)$, the right member is additive. It is also an invariant of $G$-$\mathcal{AS}$-sets because the dual equivariant Nash constructible filtration is a functor with respect to equivariant proper continuous maps with $\mathcal{AS}$-graph, since so is the homological one (use again the natural isomorphisms (\ref{naturalisodual})).

Finally, if $X$ is compact and nonsingular, we have
$$\widetilde{E}^2_{p,q} \left(X ; G\right) = \begin{cases} H_q(X; G) = H_q^G(X) & \mbox{ if $p = 0$,} \\ 0  & \mbox{ if $p \neq 0$,} \end{cases}$$
and
$$\widetilde{E}_2^{p,q} \left(X ; G\right) = \left(\widetilde{E}^2_{p,q} \left(X ; G\right)\right)^{\vee} =  \begin{cases} H^q(X; G) = H^q_G(X) & \mbox{ if $p = 0$,} \\ 0  & \mbox{ if $p \neq 0$,} \end{cases}$$
so that $\sum_{p \in \mathbb{N}} (-1)^p \dim_{\mathbb{Z}_2} \widetilde{E}_2^{p,q} \left(X ; G\right) = \dim_{\mathbb{Z}_2} H_q^G(X)$.

We conclude by the uniqueness of the $q$\textsuperscript{th} equivariant virtual Betti number with these properties.
\end{proof}

\begin{rem} If $X$ is compact nonsingular, we have a Poincar\'e duality isomorphism between $H^*(X ; G)$ and the equivariant homology considered in \cite{GF} (see \cite{VanHamel} III Theorem 4.2, \cite{GF} 2.3.5 and also \cite{Pri-CPEWF} Remark 4.26) so that
$$\beta(X ; G)(u) = u^d \beta^G(X)(u^{-1})$$  
where $d$ is the dimension of $X$ and $\beta^G$ is the equivariant virtual Poincar\'e series of \cite{GF}. 

However, this equality does not hold for general $G$-$\mathcal{AS}$-sets. Consider for instance the third example of \ref{exequivpoincser} : we have $\beta\left(X ; \{id, \sigma_1\}\right) \neq \beta\left(X ; \{id, \sigma_3\}\right)$ while $\beta^{\{id, \sigma_1\}}(X) = \beta^{\{id, \sigma_3\}}(X)$ (see \cite{GF} Example 4.6).
\end{rem}

\section{Properties of the equivariant virtual Poincar\'e series and applications}

In this final section, we show that the equivariant virtual Poincar\'e series of definition \ref{equivvps} has properties similar to the ones of the equivariant virtual Poincar\'e series of \cite{GF}, which allows it to be used to define helpful tools (namely zeta functions) for the classification of real analytic germs.
\\

All the story begins with the following property, similar to \cite{GF} Proposition 3.13 :

\begin{prop} \label{propintmotequiv} Let $X$ be a $G$-$\mathcal{AS}$-set and let $\mathbb{R}^d$ be an affine space equipped with any orthogonal action of $G$. Then
$$\beta(X \times \mathbb{R}^d ; G) = u^d \beta(X ; G)$$
(on the left-hand side, we consider the diagonal action of $G$ on $X \times \mathbb{R}^d$).
\end{prop}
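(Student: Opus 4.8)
The plan is to reduce the statement to the additivity and the compact-nonsingular normalization of $\beta(\,\cdot\,;G)$, exactly as in the non-equivariant situation. First I would compactify the affine factor equivariantly: as in example \ref{exequivpoincser} (1), realize $\mathbb{R}^d$ as the complement of the point $p = (0,\ldots,0,1)$ in the $d$-sphere $\mathbb{S}^d$ sitting in $\mathbb{R}^{d+1}$ (the radial projection of Proposition 3.5.12 of \cite{BCR}), and extend the orthogonal action of $G$ on $\mathbb{R}^d$ to an orthogonal action on $\mathbb{R}^{d+1}$ fixing the last coordinate, so that $\mathbb{S}^d$ and $p$ are $G$-invariant and the embedding $\mathbb{R}^d \hookrightarrow \mathbb{S}^d$ is equivariant. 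Then $X \times \mathbb{R}^d \subset X \times \mathbb{S}^d$ is an equivariant open inclusion of $G$-$\mathcal{AS}$-sets (diagonal actions on both sides) with complement $X \times \{p\} \cong X$, so by additivity
$$\beta(X \times \mathbb{R}^d ; G) = \beta(X \times \mathbb{S}^d ; G) - \beta(X ; G).$$

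Thus it suffices to prove $\beta(X \times \mathbb{S}^d ; G) = (1 + u^d)\,\beta(X ; G)$. I would establish this by induction on the dimension of $X$, using the standard dévissage already exploited in the proofs of theorem \ref{equivspecseqbound} and theorem \ref{equivvbn}: reduce first to the case where $X$ is compact and nonsingular (stratify $X$ by the singular locus of its Zariski closure, and compactify the nonsingular non-compact pieces by an equivariant resolution of an equivariant compactification, each time with complement of strictly smaller dimension), and observe that products, complements and equivariant biregular isomorphisms are compatible with $(-)\times \mathbb{S}^d$ and with the decomposition, so additivity propagates the identity. When $X$ is compact nonsingular, $X \times \mathbb{S}^d$ is also compact nonsingular and $\beta$ coincides there with $b(\,\cdot\,;G)$, the generating function of the dimensions of the equivariant homology groups; so I must check $b(X \times \mathbb{S}^d ; G) = (1+u^d)\, b(X ; G)$. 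For this, use a $G$-$CW$-structure on $\mathbb{S}^d$ with one $G$-fixed $0$-cell and one $G$-fixed $d$-cell (the two cells of example \ref{exequivpoincser} (1)); then $C_*^{cell}(X \times \mathbb{S}^d) \cong C_*^{cell}(X) \otimes_{\mathbb{Z}_2} C_*^{cell}(\mathbb{S}^d)$ as $G$-complexes, and since $G$ acts trivially on $C_*^{cell}(\mathbb{S}^d)$, taking group homology $H_*(G, -)$ (remark \ref{remequivsinghom} (6), remark \ref{coincideequivcohom}) together with the Künneth formula over the field $\mathbb{Z}_2$ gives $H_*(X \times \mathbb{S}^d ; G) \cong H_*(X ; G) \otimes_{\mathbb{Z}_2} H_*(\mathbb{S}^d)$, hence the desired factor $b(\mathbb{S}^d) = 1 + u^d$.

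The main obstacle is the compact-nonsingular case, i.e. justifying the Künneth-type splitting $H_*(X\times\mathbb{S}^d ; G) \cong H_*(X;G)\otimes_{\mathbb{Z}_2} H_*(\mathbb{S}^d)$ cleanly: one has to know that $X\times\mathbb{S}^d$ carries a semialgebraic $G$-$CW$-structure whose cellular chain complex is $G$-isomorphic to the tensor product of the cellular chain complexes of the factors (with $G$ acting only on the $X$-factor), which follows from \cite{ParkSuh} applied to $X$ and the product cell structure on $\mathbb{S}^d$, and then to feed this through the group-homology spectral sequence. Alternatively, and perhaps more cleanly, one can avoid the compact-nonsingular computation altogether: apply additivity directly on $\mathbb{S}^d = \mathbb{R}^d \sqcup \{p\}$ to get $\beta(X\times\mathbb{S}^d;G) = \beta(X\times\mathbb{R}^d;G) + \beta(X;G)$, which combined with the first displayed equation is a tautology — so the genuine content is precisely the compact nonsingular Künneth statement above, and there is no shortcut around it. The rest (additivity, invariance, the dévissage) is routine given the results already proved in sections \ref{sectequivnashconsfil} and \ref{secequivhomcohom}.
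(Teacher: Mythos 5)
Your proposal is correct and follows essentially the same route as the paper: equivariant compactification of the affine factor into $\mathbb{S}^d$ with a $G$-fixed point at infinity, additivity of $\beta(\,\cdot\,;G)$, the K\"unneth-type identity $b(X\times\mathbb{S}^d;G)=(1+u^d)\,b(X;G)$ in the compact nonsingular case (which the paper justifies via the Borel-construction facts stated just before the proof, while you phrase it through $H_*(G,C_*^{cell})$ and remark \ref{coincideequivcohom} --- the same argument in substance), and the standard d\'evissage by induction on dimension. Your closing observation that the sphere decomposition alone is tautological and that the real content is the compact nonsingular K\"unneth statement is accurate and matches the structure of the paper's proof.
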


Before proving the above result, we need to mention the following facts. First, remark that, if $G$ and $H$ are finite groups and $X$, resp. $Y$, are topological spaces on which $G$, resp. $H$, act via homeomorphisms, we have a K\"unneth-type formula
$$H_*^{G} (X) \otimes_{\mathbb{Z}_2} H_*^H(Y) \cong H_*^{G \times H}( X \times Y).$$
Indeed, if $E_G$, resp. $E_H$, is a contractible topological space equipped with a free action of $G$, resp. $H$, then $E_G \times E_H$ is a contractible space with a free action of $G \times H$ and $(X \times Y) \times_{G \times H} (E_G \times E_H)$ is naturally isomorphic to $(X \times_G E_G) \times (Y \times_H E_H)$, so that the usual K\"unneth formula for homology can be applied. 
\\

We will use this property together with the following one : suppose that $X$ and $Y$ are two $G$-$CW$-complexes such that the action $G$ globally stabilizes each cell of $Y$, then $H^G_*(X \times Y) = H^{G \times \{e\}}_*(X \times Y)$, where, on the left-hand side, $G$ acts diagonally on $X \times Y$ and, on the right-hand side, we make the trivial group act on $Y$. 

Indeed, compute the equivariant homology of $X \times Y$ as the cellular homology of the quotient of $(X \times Y) \times E_G$ by $G$. Take $E_G$ to be a $G$-$CW$-complex such that its cells are freely permuted by $G$ (\cite{Hatcher}  Example 1B.7)~: a cell of the quotient is then the orbit of freely permuted cells of $(X \times Y) \times E_G$ (which are the products of the cells of $X$, $Y$ and $E_G$) under the action of $G$. Since the actions of $G$ and $G \times \{e\}$ on the cells of $(X \times Y) \times E_G$ have the same orbits, we get the result.

\begin{proof}[Proof of proposition \ref{propintmotequiv}] The proof is analog to the proof of Proposition 3.13 of \cite{GF} : first, suppose that $X$ is a compact and nonsingular $G$-$\mathcal{AS}$-set and equivariantly compactify $\mathbb{R}^d$ into the $d$-dimensional sphere $\mathbb{S}^d$ as in above example \ref{exequivpoincser} (1) : $X \times \mathbb{R}^d \hookrightarrow X \times \mathbb{S}^d$ is then an equivariant compactification of $X \times \mathbb{R}^d$. 

Consider a $G$-$CW$-structure on $X$ (\cite{ParkSuh}) and the $G$-$CW$-structure on $\mathbb{S}^d$ consisting in the $G$-invariant $d$-cell $\mathbb{R}^d$ and the $G$-invariant $0$-cell $\{p\} := \mathbb{S}^d \setminus \mathbb{R}^d$. Since $X$ and $\mathbb{S}^d$ are compact and nonsingular, we have
$$\beta(X \times \mathbb{S}^d ; G) = b(X \times \mathbb{S}^d ; G) = b(X \times \mathbb{S}^d ; G \times \{e\}) = b(X ; G) \cdot b(\mathbb{S}^d ; \{e\}) = b(X ; G) \cdot b(\mathbb{S}^d) = (1+u^d) b(X ; G), $$ 
and, by additivity of the equivariant virtual Poincar\'e series,
$$\beta(X \times \mathbb{R}^d ; G) = \beta(X \times \mathbb{S}^d ; G) - \beta(X \times \{p\} ; G) = (1+u^d) b(X ; G) - b(X ; G) = u^d b(X ; G) = u^d \beta(X ; G).$$

The rest of the proof proceeds just as in \cite{GF}, using an induction on the dimension of 
$X$ and the additivity of the equivariant virtual Poincar\'e series $\beta(\, \cdot \, ; G)$ (see also the proofs of theorems \ref{equivspecseqbound} and \ref{equivvbn}).
\end{proof}

\begin{rem} The equality is true as soon as the affine space $\mathbb{R}^d$ can be equivariantly compactified into a $d$-dimensional sphere.
\end{rem}

Thanks to proposition \ref{propintmotequiv}, the equivariant virtual Poincar\'e series could be used to define invariants, in terms of (motivic) zeta functions, of some equivalence relation of equivariant Nash germs, namely equivariant blow-Nash equivalence (see \cite{GF-ZF}) or equivariant arc-analytic equivalence (see \cite{JB-AAE}), just as as in \cite{Pri-EBN}. Indeed, this is this key property of the equivariant virtual Poincar\'e series of \cite{GF}, together with its additivity, which allow to prove Propositions 3.14 and 3.17 of \cite{Pri-EBN}.

This could be applied to study the classification of simple Nash germs invariant under the involution changing the sign of the first coordinate, as in \cite{Pri-ESNG}. 
\\

We also state the analogs of Proposition 3.14 and 3.15 of \cite{GF} for our equivariant virtual Poincar\'e series :

\begin{prop} \label{equivvpoinctrivfree} Let $X$ be a $G$-$\mathcal{AS}$-set.
\begin{enumerate}
	\item If the action of $G$ on $X$ is trivial, then $\beta(X ; G) = \beta(X) \left( \sum_{q \in \mathbb{N}} H_q(G, \mathbb{Z}_2) u^q \right)$.
	\item If $X$ is a free $G$-$\mathcal{AS}$-set, then the quotient $X/G$ is well-defined as an $\mathcal{AS}$-set (corollary \ref{quotientas} and remark \ref{remdifemb}) and $\beta(X ; G) = \beta(X/G)$.
\end{enumerate}
\end{prop}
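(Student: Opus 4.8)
The plan is to deduce both identities from the very mechanism used in the uniqueness part of Theorem \ref{equivvbn}. For each of the two assertions I would produce a second additive invariant of $G$-$\mathcal{AS}$-sets (in the sense defined just before Definition \ref{equivvps}), \emph{defined only on the relevant subcategory} — the sets with trivial $G$-action for (1), the free $G$-$\mathcal{AS}$-sets for (2) — which agrees with $\beta(\,\cdot\,;G)$ on compact nonsingular objects, and then rerun, inside that subcategory, the dimension induction of Theorem \ref{equivvbn}. This is legitimate because that induction uses only additivity, invariance under equivariant biregular isomorphisms, and the value on compact nonsingular $G$-$\mathcal{AS}$-sets, and because both subcategories are stable under the operations it invokes: taking $\mathcal{AS}$-closures, removing the singular locus of the Zariski closure, and replacing the Zariski closure by an equivariant resolution (\cite{BM}).

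For (1), I would set $P_G(u) := \sum_{q \in \mathbb{N}} \dim_{\mathbb{Z}_2} H_q(G,\mathbb{Z}_2)\, u^q$, so that the right-hand side is $\beta(X)\, P_G(u)$. Since $\beta(\,\cdot\,)$ is additive and invariant under homeomorphisms with $\mathcal{AS}$-graph, and since an equivariant homeomorphism with $\mathcal{AS}$-graph between sets carrying trivial $G$-actions is simply a homeomorphism with $\mathcal{AS}$-graph, the rule $X \mapsto \beta(X)\, P_G(u)$ is, coefficient by coefficient, an additive invariant of $G$-$\mathcal{AS}$-sets with trivial action. On a compact nonsingular such $X$ one has $\beta(X;G) = b(X;G)$; by the trivial-action case of Remark \ref{remequivsinghom}(5) together with the K\"unneth formula, $H^G_*(X) \cong H_*(X) \otimes_{\mathbb{Z}_2} H_*(G,\mathbb{Z}_2)$, whence $b(X;G) = b(X)\, P_G(u)$; and $b(X) = \beta(X)$ because $X$ is compact nonsingular. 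Thus the two invariants agree on compact nonsingular objects of the subcategory, and since blowing up a set with trivial $G$-action again carries a trivial action, the reduction to that case stays inside the subcategory and the induction closes.

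For (2), I would first note that $X \mapsto \beta(X/G)$ makes sense on free $G$-$\mathcal{AS}$-sets by Corollary \ref{quotientas} and Remark \ref{remdifemb}; that it is additive, since an equivariant closed inclusion $Y \subset X$ induces a closed inclusion $Y/G \subset X/G$ with $(X/G)\setminus(Y/G) = (X\setminus Y)/G$ (the fibres of $\pi$ being the $G$-orbits), so additivity of $\beta(\,\cdot\,)$ applies; and that it is invariant under equivariant homeomorphisms with $\mathcal{AS}$-graph, since such an $f$ induces a bijection $f_{/G}$ with $\mathcal{AS}$-graph (Propositions \ref{quotientcont} and \ref{quotientmappingAS}) and $\beta(\,\cdot\,)$ is invariant under bijections with $\mathcal{AS}$-graph. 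On a compact nonsingular free $X$, the quotient $X/G$ is compact (continuity of $\pi$), an $\mathcal{AS}$-set (Corollary \ref{quotientas}) and nonsingular (Proposition \ref{quotientnonsingiffree}), so by Lemma \ref{coincideequivhomsingclosed} and the free-action case of Remark \ref{remequivsinghom}(5) we get $b(X;G) = b(X/G)$, hence $\beta(X;G) = b(X;G) = b(X/G) = \beta(X/G)$ (both sides being compact nonsingular). It then remains to run the induction of Theorem \ref{equivvbn} while checking stability of the subcategory: $\mathrm{Sing}(\overline{X}^{\mathcal{Z}}) \cap X$ is a $G$-stable closed $\mathcal{AS}$-subset of strictly smaller dimension whose $\mathcal{AS}$-closure lies in $\overline{X}^{\mathcal{AS}}$, hence is again free; and if $s:\widetilde{Z}\to\overline{X}^{\mathcal{Z}}$ is an equivariant resolution, then $G$ acts freely on the compact nonsingular $G$-$\mathcal{AS}$-set $s^{-1}\!\left(\overline{X}^{\mathcal{AS}}\right)$, because $g\cdot z = z$ forces $g\cdot s(z)=s(z)$ with $s(z)\in\overline{X}^{\mathcal{AS}}$, where $G$ acts freely. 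I expect precisely this last point — stability of the two subcategories under passage to compact nonsingular models, and in particular that an equivariant resolution of the Zariski closure restricts to a free action on the preimage of the arc-symmetric closure — to be the only genuine obstacle; everything else is a transcription of the proof of Theorem \ref{equivvbn} together with the K\"unneth-type identifications of Remark \ref{remequivsinghom}(5).
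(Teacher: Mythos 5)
Your proposal is correct and follows essentially the same route as the paper: the paper also argues by induction on dimension (following Fichou's Propositions 3.14 and 3.15), using the K\"unneth identification $H_*(X;G)=H_*(X)\otimes_{\mathbb{Z}_2}H_*(G,\mathbb{Z}_2)$ in the compact nonsingular trivial-action case and $H_*(X;G)=H_*(X/G)$ (with the quotient compact, $\mathcal{AS}$ and nonsingular) in the compact nonsingular free case, then reduces via additivity and equivariant resolution. Your repackaging as a uniqueness-style comparison of two additive invariants on the relevant subcategory, including the check that freeness (resp.\ triviality) of the action survives passage to $s^{-1}\bigl(\overline{X}^{\mathcal{AS}}\bigr)$ for an equivariant resolution $s$, is just an explicit form of that same induction.
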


\begin{proof} For the first point, proceed just as in the proof of Proposition 3.14 of \cite{GF}, using an induction on dimension, as well as the Kunneth isomorphism $H_*(X ; G) = H_*(X) \otimes_{\mathbb{Z}_2} H_*(G, \mathbb{Z}_2)$ (see remark \ref{remequivsinghom} (5)) when $X$ is compact and nonsingular. 

For the second point, use also an induction on dimension. For the compact case, proceed as in the proof of Proposition 3.15 of \cite{GF}, considering an equivariant resolution of the singularities of $X$. If $X$ is not compact, apply the previous case to $\overline{X}^{\mathcal{AS}}$ (the action of $G$ on $\overline{X}^{\mathcal{AS}}$ is free by definition) and the induction hypothesis to $\overline{X}^{\mathcal{AS}} \setminus X$ to obtain, by additivity of the equivariant virtual Poincar\'e series,
$$\beta(X ; G) = \beta\left(\overline{X}^{\mathcal{AS}} ; G\right) - \beta\left(\overline{X}^{\mathcal{AS}} \setminus X ; G\right) = \beta\left(\overline{X}^{\mathcal{AS}}/ G\right) - \beta\left((\overline{X}^{\mathcal{AS}} \setminus X)/G\right) = \beta(X/G).$$
\end{proof}

\begin{rem} As pointed out in example \ref{exequivpoincser} (2), our equivariant virtual Poincar\'e series does not encode the dimension, contrary to the equivariant virtual Poincar\'e series of \cite{GF} (Proposition 3.10). However, our equivariant virtual Poincar\'e series has been proven to be an invariant with respect to equivariant homeomorphism with $\mathcal{AS}$-graph, and we saw in example \ref{exequivpoincser} (2) that it could detect differences in some equivariant $\mathcal{AS}$-stuctures that the equivariant virtual Poincar\'e series of \cite{GF}, as well as the virtual Poincar\'e polynomial of the fixed points set, could not see. These three additive invariants should be thought as complementary.
\end{rem}

\end{document}